\documentclass[10pt]{article}

\usepackage[utf8x]{inputenc}
\usepackage[english]{babel}
\usepackage[T1]{fontenc}
\usepackage{amsmath,amscd}
\usepackage{amsfonts}
\usepackage{amssymb}
\usepackage{amsthm}
\usepackage{mathrsfs}
\usepackage[all,cmtip]{xy}
\usepackage{textcomp}
\usepackage{qtree}
\usepackage{url}
\usepackage{amsopn,latexsym}
\usepackage[pdftex]{graphicx}
\usepackage{fullpage}
\usepackage{amsmath,graphicx}


\newtheorem{teo}{Theorem}[section]
\newtheorem{lemma}{Lemma}[section]
\newtheorem{rem}{Remark}[section]
\newtheorem{prop}{Proposition}[section]
\newtheorem{cor}{Corollary}[section]
\newtheorem{es}{Example}[section]
\newtheorem{defin}{Definition}[section]

\newcommand{\Z}{{\mathbb{Z}}}

\newcommand{\R}{{\mathbb{R}}}

\newcommand{\N}{{\mathbb{N}}}

\newcommand{\G}{{\mathcal{G}}}

\newcommand{\MS}{{\mathcal{S}}}

\newcommand{\A}{{\mathcal{A}}}

\newcommand{\Sc}{{\mathcal{S}}}

\newcommand{\F}{{\mathcal{F}}}
\newcommand{\T}{{\mathcal{T}}}

\newcommand{\emme}{{\mathcal M}}
\newcommand{\mcap}{{\widehat{\mathcal M}}(\A^\perp)}





\title{Permutonestohedra}
\author{Giovanni Gaiffi\footnote{Dipartimento di Matematica, Universit\`a di Pisa}}

\begin{document}

\maketitle

\begin{abstract}
There are  several real spherical models associated with a  root arrangement, depending on the choice of a {\em building set}.
 The connected components of these  models are manifolds with corners  which can be glued together to obtain the corresponding real De Concini-Procesi models.
In this paper, starting from  any  root  system \(\Phi\) with finite Coxeter group \(W\) and any \(W\)-invariant building set,  we describe an explicit  realization of the real spherical model as a union of polytopes (nestohedra) that  lie inside  the   chambers of the arrangement. The main point  of this realization is that   the  convex hull of these nestohedra is a larger polytope, a {\em permutonestohedron}, equipped with an action of \(W\) or also, depending on the building set,  of \(Aut(\Phi)\).  The permutonestohedra are   natural   generalizations of  Kapranov's permutoassociahedra.

\end{abstract}

\section{Introduction} 

Let \(V\) be  an euclidean vector space of dimension \(n\), let \(\Phi\subset V\) be a root system which spans \(V\) and has  finite Coxeter group \(W\), and let \(\G\) be a \(W\)-invariant building set associated with  \(\Phi\) (see Section \ref{sec:ricordare} for a definition of building set).

 The main goal  of this paper is to provide an explicit linear realization  of the {\em permutonestohedron} \(P_\G(\Phi)\),  a polytope  whose face poset was  introduced in \cite{gaimrn}; this polytope is  linked with   the geometry of the  wonderful model of the  arrangement and is equipped with an action of \(W\) or also, depending on the building set,  of \(Aut(\Phi)\).
 
  Let us  briefly describe the framework from which the permutonestohedra arise. 

Given  a   building set \(\G\)  one can construct as in \cite{DCP1} a real and a  complex  De Concini-Procesi model (resp. \(Y_\G (\R)\) and   \(Y_\G\)), or their  `spherical' version,   a real model with corners \(CY_\G\) (see \cite{gaimrn0}).  

These  models play a relevant role in several   fields of mathematical research:  subspace and toric  arrangements,  toric varieties (see for instance \cite{DCP3}, \cite{feichtneryuz}, \cite{rains}),   
moduli spaces of curves and       configuration spaces (see for instance \cite{etihenkamrai}, \cite{LTV}), box splines and  index theory (see the exposition in  \cite{DCP4}),  discrete  geometry    (see \cite{feichtner} for further references).

From the point of view of discrete geometry and combinatorics the  spherical model  \(CY_\G\) is a particularly interesting object. 
In fact   \(CY_\G\) has as many  connected components as the number of chambers and 
these connected components  are non linear realizations of some  polytopes that belong to the family of nestohedra. The nestohedra  have been defined and studied in   \cite{postnikov}, \cite{postnikoreinewilli}, \cite{zelevinski},  and successively in several other papers, due to the interest of nested sets complexes in  combinatorics    and geometry (see for instance  \cite{feichtnersturmfels} for applications to  tropical geometry and \cite{cavazzanimoci} for applications to  the study of  Dahmen-Micchelli modules).

 Moreover we  remark that  a suitable gluing of the connected components of  \(CY_\G\) produces the model \(Y_\G (\R)\), which is therefore presented as a CW complex.
If  \(\G\) is \(W\)-invariant,  \(Y_\G (\R)\) (as well as  \(Y_\G\)) inherits an action of \(W\) which gives rise to geometric  representations in cohomology.

The permutonestohedron \(P_\G(\Phi)\) arises in the middle of  this rich algebraic and geometric picture.
Its  name comes from the following remarks:
\begin{itemize}
\item   some of its facets  lie inside the chambers  of the arrangement;  they are nestohedra, and their union   is a \(W\)-invariant linear realization of \(CY_\G\);  their convex hull is the full permutonestohedron;   
\item if \(\Phi=A_n\) and \(\G\) is the minimal building set associated to \(\Phi\), the  permutonestohedron is a realization of  Kapranov's permutoassociahedron (see \cite{Kapranov}). 

\end{itemize}
We remark that a  non linear  realization  in \(V\) of \(CY_\G\) (and of a convex body whose face poset is the same as the face poset of \(P_\G(\Phi)\))   was provided in \cite{gaimrn}.

As we explain in Section \ref{sec:stasheff}, the linear realization of \(CY_\G\) that we describe in this paper   is inspired by (and generalizes in a way) Stasheff and Shnider's construction of the associahedron in the Appendix B of \cite{stasheffshnider}.

The main point in our choice of   the  defining hyperplanes  is that the every  nestohedron in \(CY_\G\)  lies inside a chamber of the arrangement; furthermore,  each of these  hyperplanes  is  invariant with respect to the action of a parabolic subgroup.  These are the reasons why, in  the global construction, when we   consider the convex hull of  all  the nestohedra that lie in the chambers,  the extra facets of  the permutonestohedron \(P_{\G}(\Phi)\) appear. These turn out to be combinatorially equivalent to the product of a nestohedron  with some smaller permutonestohedra (see Theorem \ref {combinatorialiso}).

We notice that when \(\Phi=A_1^n\), as the building set varies,  the nestohedra we obtain   inside  every orthant are  all  the nestohedra in the `interval simplex-permutohedron' (see \cite{petric2}). 
In particular  in the case of the  associahedron our construction coincides with the one in \cite{stasheffshnider}, while  for the other  nestohedra it is analogue to   the constructions   in   \cite{dosenpetric}  and  \cite{devadossforcey}, \cite{carrdevadoss2},   that  are remarkable     generalizations of Stasheff and Shnider's construction.  

In general, once \(\Phi\) is fixed,   the nestohedron  which lies inside a chamber depends  on \(\G\).  For instance, the minimal building set associated to \(\Phi\) is the building set made by  the {\em irreducible} subspaces, i.e., the subspaces spanned by the  irreducible root subsystems, while the maximal building set is made by all the subspaces spanned by some of the roots. Now, if \(\Phi\) is \(A_{n}\), \(B_n\) or \(C_n\) and \(\G\) is the minimal building set,  the nestohedron is a \(n-1\) dimensional Stasheff's associahedron;  if \(\Phi\) is \(D_{n}\)  and \(\G\) is the minimal building set, the nestohedron is a graph associahedron of type \(D\). For any \(n\)-dimensional root system, if \(\G\) is the maximal building set  the nestohedron is a \(n-1\) dimensional permutohedron (so in this case our permutonestohedron could  also be called `permutopermutohedron').
 When   the root system is  of type \(A_n\), \(B_n\), \(C_n\), \(D_n\) the full poset of invariant  building sets has  been described  in \cite{GaiffiServenti2}.


As mentioned before, we obtain the permutonestohedron \(P_\G(\Phi)\) by taking the convex hull of the nestohedra that  lie in the chambers and whose union realizes \(CY_\G\).  From this point of view the  construction of the permutonestohedron is  inspired by   Reiner and Ziegler construction of {\em Coxeter associahedra}  in \cite{reinerziegler},  since these    are obtained as the convex hull of some   Stasheff's associahedra that  lie inside the chambers.  
Anyway we remark that  only when \(\Phi=A_n\) and \(\G\) is the minimal building set the permutonestohedron  \(P_\G(\Phi)\) is combinatorially equivalent to a Coxeter associahedron (i.e., in   the case of Kapranov's permutoassociahedron). 
For instance   in the \(B_n\) case  the Coxeter associahedron is the convex hull of some   \(n-2\) dimensional Stasheff's associahedra, while our  minimal permutonestohedron  is the convex hull  of some  \(n-1\) dimensional Stasheff's associahedra; furthermore, non minimal permutonestohedra are convex hulls of different nestohedra.

 \begin{figure}[h]
 \center
\includegraphics[scale=0.4]{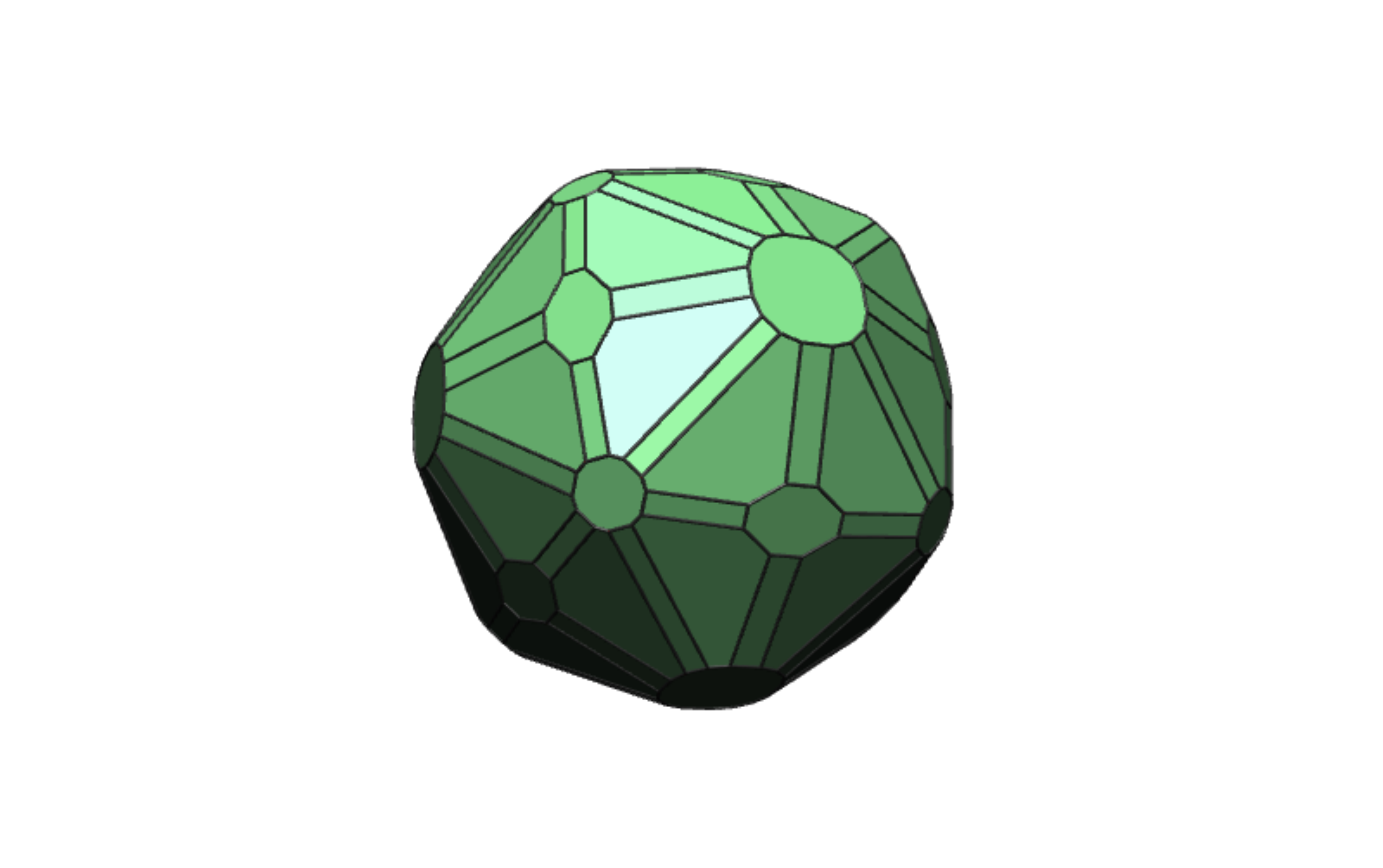}
\caption{The maximal permutonestohedron of type \(B_3\): inside each chamber of the arrangement there is a nestohedron (in this case a hexagon, i.e. a  two dimensional permutohedron), and the permutonestohedron is the convex hull of all these nestohedra.}
\label{prima}
\end{figure}

We will also focus on the group of isometries of the  permutonestohedron \(P_\G(\Phi)\): it  contains \(W\)  or even, depending on the `symmetry' of \(\G\), the group \(Aut (\Phi)\), and we will   describe  some  conditions which are sufficient  to ensure that it is equal to \(Aut (\Phi)\).   We will also point out that a subposet of the minimal permutonestohedron of type \(A_{n-1}\) is equipped with an   `extended' action of \(S_{n+1}\).

Finally, here it is a short  outline of this paper. In Section \ref{sec:ricordare} we briefly  recall the basic properties of  building sets, nested sets and spherical models,  while Section \ref{sec:permutonestohedra} contains the definition of permutonestohedra and the statement of main theorems, Theorems \ref{teo:nestoedro} and \ref{teo:verticipermutoedro},  whose proofs  can be found   respectively in Section \ref{sec:nestohedron} and Section \ref{sec:permutodimostrazioni}.

Section \ref{sec:poset} is devoted to a presentation  of the face poset of the permutonestohedron \(P_\G(\Phi)\); this  points out the action of \(W\),  since  the faces of  \(P_\G(\Phi)\) are  indicized by pairs of the form (coset of \(W\),  labelled nested set). Furthermore,  Theorem \ref{combinatorialiso}   shows   that every facet that does not lie inside a chamber is combinatorially equivalent to  the product of a nestohedron  with some smaller permutonestohedra. We also show (Corollary \ref{cor:nonsemplice}) that   \(P_\G(\Phi)\)  is a simple polytope if and only if \(\G\) is the maximal building set associated to \(\Phi\).

The full group of isometries that  leave \(P_\G(\Phi)\) invariant is explored in Section \ref{sec:autgroup}: if \(\G\) is invariant with respect to \(Aut (\Phi)\) this group contains  \(Aut (\Phi)\),   and we describe  some  natural conditions that  are sufficient  to ensure that it is equal to \(Aut (\Phi)\).

In Section \ref{sec:extendedaction} we show how the  well know `extended' \(S_{n+1}\) action on the minimal De Concini-Procesi model of type \(A_{n-1}\)  can be lifted to  a subposet of the minimal permutonestohedron of type \(A_{n-1}\), providing geometrical realizations of the representations \(Ind_G^{S_{n+1}} Id\), where \(G\) is any subgroup of the cyclic group of order \(n+1\).

Finally in   Section \ref{sec:examples}   we show some  examples and pictures of  permutonestohedra and, as an example of face counting, we compute the \(f\)-vectors of the minimal and of the maximal permutonestohedron  of type \(A_n\).

All the  pictures of three-dimensional polytopes (like the one in Figure \ref{prima}) have been realized using the mathematical software package {\it polymake} (\cite{polymake}).


\bigskip

\section{Building sets, nested sets, nestohedra and spherical models}
\label{sec:ricordare}
 Let us  recall from \cite{DCP1}, \cite{DCP2} the  definitions of nested set and building set of subspaces; more precisely, we  specialize these definitions to the case  we are interested in, i.e. the case when we deal with  a central hyperplane   arrangement.

Let  $\A$ be a central line    arrangement in an Euclidean space \(V\).
We denote   by ${\mathcal C}^{}_{\A}$ the closure  under the sum of $\A^{}$ and 
by $\A^{\perp}$ the hyperplane arrangement 
\[\A^\perp=\{A^\perp \:|\: A\in \A\}\]

\begin{defin}
\label{geometricbuilding}
The  collection of subspaces $\G \subset {\mathcal C}^{}_{\A}$ is called  {\em building set associated to \(\A\)} if \(\A\subset \G\) and  every element $C$ of ${\mathcal C}^{}_{\A}$ is the direct sum 
 $C= G_1\oplus G_2\oplus \ldots\oplus G_k$ of  the maximal elements 
$G_1,G_2,\ldots,G_k$  of
$\G^{}$ contained in $C$. This is called the \(\G\)-decomposition of \(C\).
\end{defin}
Given a hyperplane  arrangement  \(\A\), there are several building sets associated to it. Among these there always are a maximum and a minimum (with respect to inclusion). The maximum is ${\mathcal C}^{}_{\A}$, the minimum is the building set of {\em irreducibles}. In the case of a root  arrangement the building set of irreducibles is the set of  subspaces spanned by  the irreducible root subsystems of the given root system (see \cite{YuzBasi}).




 \begin{defin}
Let $\G$ be a building set associated to \(\A\). 
A subset $\MS\subset \G$ is called 
($\G$-) nested, if
 given a subset $\{U_1,\ldots ,U_h \}$ (with \(h>1\)) of pairwise non comparable 
	elements in $\MS$, then $U_1\oplus \cdots \oplus U_h\notin \G$.
\end{defin}

After De Concini and Procesi's paper \cite{DCP1},    nested sets and building sets appeared in the literature, connected with several combinatorial problems.  
One can see  for instance  \cite{feichtnerkozlovincidence},  where   building sets and nested sets  were defined in the more general context of meet-semilattices, and  \cite{delucchinested} where the connection with Dowling lattices was investigated.  Other purely combinatorial definitions were used   to  give rise to  the polytopes that  were named {\em nestohedra} in \cite{postnikoreinewilli}  and turned out to play a relevant role in discrete mathematics and tropical geometry. Presentations  of nestohedra can be found for instance  in    \cite{postnikov},
\cite{postnikoreinewilli}, \cite{zelevinski}, \cite{feichtnersturmfels}, \cite{carrdevadoss}, \cite{carrdevadoss2}, \cite{petric2}.

Here we recall, for the convenience of the reader,  the combinatorial definitions of building sets and nested sets as they appear in \cite{postnikov},
\cite{postnikoreinewilli}. One can  refer to Section 2 of \cite{petric2} for a short comparison among   various definitions and notations in the literature. 

\begin{defin}
\label{combinatorialbuilding}
A building set \({\mathcal B}\) is a set of subsets of \(\{1,2,...,n\}\), such that:
\begin{itemize}
\item[a)] If \(A,B \in {\mathcal B}\) have nonempty intersection, then \(A\cup B\in {\mathcal B}\).
\item[b)]The set \(\{i\}\) belongs to \({\mathcal B}\) for every \(i\in \{1,2,...,n\}\).
\end{itemize}

\end{defin}

\begin{defin}
\label{combinatorialnested}
A subset  \({\mathcal S}\) of a building set  \({\mathcal B}\) is a nested set if and only if the following three conditions hold: 
\begin{itemize}
\item[a)] For any \(I,J\in {\mathcal S}\) we have that either \(I\subset J\) or \(J\subset I\) or \(I\cap J=\emptyset\).
\item[b)] Given elements  \(\{J_1,...,J_k\}\) (\(k\geq 2\)) of \({\mathcal S}\) pairwise not comparable with respect to inclusion, their union is not in \({\mathcal B}\). 
\item[c)] \({\mathcal S}\) contains all the sets of \({\mathcal B}\) that  are maximal with respect to inclusion. 
\end{itemize}

\end{defin}

The nested set complex  \({\mathcal N}({\mathcal B})\) is the poset of all  the nested sets of \({\mathcal B}\) ordered by inclusion. 
A  nestohedron \(P_{\mathcal B}\) is a polytope whose face poset, ordered by reverse inclusion, is isomorphic to the nested set complex \({\mathcal N}({\mathcal B})\).

Let us now consider a  `geometric' building set \(\G\) of subspaces associated with a root  arrangement,  according to the Definition \ref{geometricbuilding}, and let us suppose that \(V\in \G\).  
\begin{defin}
\label{defingfund} We will denote by \(\G_{fund}\) the building set made by the subspaces in \(\G\) that are spanned by  some subset of the set of simple roots.
\end{defin}
Now \(\G_{fund}\)  gives rise to a building set in the sense of the Definition \ref{combinatorialbuilding} in the following way: we  associate to a subspace \(A\in \G_{fund}\)  the set of indices of  the simple roots contained in it. 

Having established this correspondence,  the  nested sets in the sense of  De Concini and Procesi  are  nested sets in the sense of the Definition \ref{combinatorialnested} provided that they  contain   \(V\), the maximal subspace in \(\G_{fund}\). These nested sets  form a nested set complex denoted by  \({\mathcal N}(\G_{fund})\).
The  nestohedra that will appear in our construction, as facets of permutonestohedra,   are the nestohedra \(P_ {\G_{fund}}\) associated with  the nested set complexes \({\mathcal N}(\G_{fund})\).



Now let    $\A$ be, as above,  a central line    arrangement in an Euclidean space \(V\), and let \(\emme(\A^{\perp})\) be the complement in \(V\)  to \(\A^\perp\).
In \cite{gaimrn0} the following  
compactifications of $\mcap=\emme(\A^{\perp})/\R ^+$ were defined, in the spirit of De Concini-Procesi construction of wonderful models.

Let us denote  by $S(V)$ the $n-1$-th dimensional unit sphere in $V$, and, for 
every subspace $A\subset V$, let $S(A)=A\cap S(V)$. Let \(\G\) be a building set associated to \(\A\), and    let us consider 
the compact manifold 
\begin{displaymath}
	K=S(V)\times\prod _{A\in \G}S(A^{})
\end{displaymath}
There is an open embedding 
\(
\phi:\: 	\mcap\longrightarrow K
\)
which  is obtained as a composition of the 
section $s:\:\mcap \mapsto \emme(\A^\perp)$  
\begin{displaymath}
	s([p])=\frac{p}{|p|}\in S(V)\cap \emme(\A^\perp)
\end{displaymath} 
with the map
\begin{displaymath}
 	\emme(\A^\perp)\mapsto S(V)\times\prod _{A\in \G}S(A ^{})
\end{displaymath}
that is well defined  on each factor. 
\begin{defin} 
We denote by $CY_{\G}$ the closure in $K$ of $\phi(\mcap)$.
\end{defin}
It turns out (see \cite{gaimrn0}) that 
$CY_{\G}$ is a smooth manifold with corners. Its (not connected) boundary components  are in correspondence with the elements of the building set \(\G\), and the intersection of some boundary components is nonempty if and only if  these components correspond to a nested set.
We notice that  \(CY_\G\) has as many  connected components as the number of chambers of \(\emme(\A^\perp)\). 

In \cite{gaimrn}  these connected components were  realized inside the chambers, as the complements of   a suitable set of  tubular neighbouroods of the subspaces in \(\G^\perp\), giving rise to  some non linear realizations of the nestohedra \(P_ {\G_{fund}}\).

\section{The construction of permutonestohedra}
\label{sec:permutonestohedra}
\subsection{Selected  hyperplanes}
\label{secgeneral}
The goal of this section is  to give  the equations of the hyperplanes that will be used in the definition of the permutonestohedra.

Let \(V\) be as before  an euclidean vector space of dimension \(n\)  with scalar product \((  \ , \ )\).
Let us consider  a root  system \(\Phi\) in \(V\) with finite Coxeter group \(W\), and a basis of {\em simple roots}  \(\Delta=\{\alpha_1,...,\alpha_n\}\) for \(\Phi\). If \(\Phi\) is irreducible,  we consider  in the open fundamental chamber \(Ch(\Delta)\) the vector 
\[\delta=\frac{1}{2}\sum_{\alpha \in \Phi^+}\alpha =\sum_{i=1}^{n}\omega_i\]
where \(\Phi^+\) is the set of positive roots and the simple weights \(\omega_i\) are defined by 
\[\frac{2(\alpha_i, \omega_i)}{(\alpha_i,\alpha_i)}=\delta_{ij}\]  (see  \cite{bourbaki4}, \cite{humphreyscoxeter} as  general references on root systems and  finite Coxeter  groups).

If \(\Phi\) is not irreducible and splits into the irreducible subsystems \(\Phi_1,\Phi_2, \ldots, \Phi_s\) we put \(\delta=\sum \delta_{\Phi_i}\) where, for every \(i\), \(\delta_{\Phi_i}\) is the semisum of all the positive roots of \(\Phi_i\).

Let us denote by \(\mathcal{C}_\Phi\)  the building set of all the subspaces that can be generated as the span of some of the roots in \(\Phi\) and by  \(\F_\Phi\)  the building set  of all the irreducible subspaces in \(\mathcal{C}_\Phi\). As we recalled in Section \ref{sec:ricordare},   \(\F_\Phi\) is made by  all the subspaces   that  are spanned by  the irreducible root subsystems of \(\Phi\).

 Let  \(\G\) be a building set associated to the root system \(\Phi\) 
 which contains \(V\) and is   \(W\)-invariant; when the root system is  of type \(A_n\), \(B_n\), \(C_n\), \(D_n\) these building sets have been classified in \cite{GaiffiServenti2}.


 Given \(A\in \G_{fund}-\{V\}\), we  will denote by \(W_A\) the parabolic subgroup generated by the reflections \(s_\alpha\) with \(\alpha\in \Phi\cap A\). We denote by \(\delta_A\)  the  orthogonal projection of \(\delta\) to \(A^\perp\); we have 
\[\delta_A=\frac{1}{|W_A|}\sum_{\sigma \in W_A} \sigma (\delta)\]
and we  also write
\(\delta_A=\delta-\pi_A\)
where \(\pi_A\) belongs to \(A\).

\begin{rem}
\label{pia}
If \(A\cap \Phi\) is an irreducible root subsystem,  we notice  that \(\pi_A\) is the semisum of all its  positive  roots. If \(A\cap \Phi\) is not irreducible, and splits into the irreducible subsystems \(\Phi_1,...,\Phi_s\), then \(\pi_A=\sum \pi_{\Phi_i}\)  where \(\pi_{\Phi_i}\) is the  semisum of all the   positive  roots of   \(\Phi_i\).  

Let \(I\) be the set of indices made by the  \(i\in\{1,2,...,n\}\) such that \(\alpha_i\in A\) and let \(J \) be the set of indices, in the complement of \(I\), such that \(j\in J\) iff \((\alpha_j, \alpha_i)<0\) for some \(i\in I\). Then 
 \[\pi_A=\sum_{i\ s.t. \ \alpha_i\in A}\omega_i - \sum_{j\in J}c_j\omega_j\]
 with all \(c_j>0\).
Therefore \[\delta_A=\sum_{s\in \{1,2,...,n\}-I}b_s\omega_s\]  with all \(b_s\geq 1\).
\end{rem}

\begin{rem} We put \(\delta_V=\pi_V=\delta\). Despite appearances,  this notation will not be confusing.
\end{rem}

Let us consider two subspaces  \(B\subset A\) in \( \G_{fund}\)  of  dimension \(j<i\) respectively and write \(\pi_A\) and \(\pi_B\) as   non negative  linear combinations of the simple roots.  We denote 
by  \(a\)  the maximum coefficient of \(\pi_A\) and by \(b\) the minimum coefficient of \(\pi_B\) and put  \(R^A_B=\frac{a}{b}\).

We then define \(R^i_{j}\) as the maximum among  all the \(R^A_B\) with \(A,B\) as above.
 
\begin{defin}
\label{defsuitablelist}
A  list of positive real numbers \(\epsilon_1<\epsilon_2<\ldots < \epsilon_{n-1}< \epsilon_n=a\) will be {\em suitable} if \(\epsilon_i>2 R^i_{i-1}\epsilon_{i-1}\) for every  \(i=2,....,n\).
\end{defin}

We are now in position to define  a set of selected hyperplanes that depend on the choice of  a suitable list  \(\epsilon_1<\epsilon_2<\ldots < \epsilon_{n-1}< \epsilon_n=a\) and are  indicized by  the elements of \(\mathcal{C}_{\Phi_{fund}}\). These hyperplanes,  together with their images via the \(W\) action, will be the defining hyperplanes of the permutonestohedron \(P_\G(\Phi)\).

The motivation for  this definition of suitable list will be pointed out  in Section \ref{sec:stasheff}. 

We start from:
\begin{defin}Let us denote by \(H_V\) the hyperplane 
\[H_V=\{x\in V \: | \: (x, \delta)=a\}\]
and by \(HS_V\) the closed half-space that contains the origin and whose boundary is \(H_V\).
\end{defin}

For every \(i=1,2,...,n\) we  call \(v_i\)  the intersection  \(H_V\cap <\omega_i>\);    all the vectors \(v_i\) lie on the  hyperplane \(H_V\) and  their convex hull, as it is well known, is a (\(n-1\))- simplex.

\begin{defin}
For every \(A\in \G_{fund}-\{V\}\),  we  define  the hyperplane
\(H_A\) as 
\[H_A=\{x\in V \: | \: (x, \delta_A)=a-\epsilon_{dim \ A}\}\]
and we denote by \(HS_A\) the closed half-space that contains the origin and whose boundary is \(H_A\).

\end{defin}
 Now we have to define the hyperplanes indicized by the elements of \( \mathcal{C}_{\Phi_{fund}}-\G_{fund}\):
\begin{defin}
\label{definHB}
Let \(B\) be a subspace in  \( \mathcal{C}_{\Phi_{fund}}-\G_{fund}\), i.e. a subspace which does not belong to \(\G_{fund}\) and is generated by some of the simple roots,  and let  \(B=A_1\oplus A_2\oplus \cdots \oplus A_k\) where \(A_1,A_2,...,A_k\) is  its \(\G_{fund}\)-decomposition. Then we put 
\[\overline{H}_B=\{x\in V \: | \: (x,\delta_B)= a-\epsilon_{dim \ A_1}-\epsilon_{dim \ A_2}- \cdots - \epsilon_{dim \ A_k}\}\] 
where \(\delta_B=\delta-\sum_{i=1}^k\pi_{A_i}\). We denote  by \(\overline{HS}_B\) the closed half-space that contains the origin and whose boundary is \(\overline{H}_B\).
\end{defin}
 

 

\subsection{Definition of the permutonestohedra}
\label{definizioneenunciati}
This section starts with the   definition of the permutonestohedron \(P_\G(\Phi)\) as the intersection of closed half-spaces. We then state two theorems that  show that the permutonestohedron is a convex hull of nestohedra and explicitly determine its vertices.

\begin{defin}
\label{defpermuto}
The  {\em permutonestohedron} \(P_\G(\Phi)\)  is the polytope given by the intersection of the half-spaces \(\sigma HS_{ A}\)  and of  the half-spaces \(\sigma\overline{HS}_{ B}\), for all \(\sigma \in W\), \(A\in \G_{fund}\) and \(B\in \mathcal{C}_{\Phi_{fund}}-\G_{fund}\).
\end{defin}

\begin{rem}
We are in fact defining infinite permutonestohedra, depending on the choice of a suitable list \(\epsilon_1<\epsilon_2<\ldots < \epsilon_{n-1}< \epsilon_n=a\), so we should write \(P_\G(\Phi,\epsilon_1,\epsilon_2,\ldots,\epsilon_n)\) instead than \(P_\G(\Phi)\). We  use the shorter notation \(P_\G(\Phi)\) since in this paper the dependence on the suitable list will be relevant only when we will deal with the automorphism group in Section \ref{sec:autgroup}, where  we will take care of avoiding   ambiguities.
\end{rem}

Now, given  a subset \(\T\) of \(\G_{fund}\) containing \(V\) and of cardinality \(n\),  with the property  that the vectors \(\{\delta_A \: | \: A\in \T\}\) 
are linearly independent,  we denote by 
\(v_\T\) the vector defined  by 
\[ v_\T= \bigcap_{A\in \T}H_A\]

As we will observe in Proposition \ref{basenested},  every    maximal nested   set \(\MS\) of \(\G_{fund}\)  has the above mentioned property.

The following theorems will be proven in  Section \ref{sec:nestohedron} and Section \ref{sec:permutodimostrazioni}.

\begin{teo}
\label{teo:nestoedro}
The intersection of \(H_V\) with all the half-spaces \(HS_{ A}\), for  \(A\in \G_{fund}-\{V\}\),  is a realization of the nestohedron \(P_{\G_{fund}}\) which lies in the open fundamental chamber.
Its  vertices are the vectors 
\(v_\MS\) where  \(\MS\) ranges among the  maximal nested sets in \(\G_{fund}\).
\end{teo}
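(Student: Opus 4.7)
My plan is to realize the polytope $Q := H_V \cap \bigcap_{A \in \G_{fund} - \{V\}} HS_A$ as an iterated truncation, in order of increasing $\dim A$, of the $(n-1)$-simplex $\Sigma := \mathrm{conv}(v_1, \ldots, v_n) \subset H_V$. This mirrors the Stasheff--Shnider realization of the associahedron recalled in the introduction, and the suitable-list condition in Definition \ref{defsuitablelist} has been engineered precisely to make such a realization go through. Note that $\Sigma = H_V \cap \overline{Ch(\Delta)}$, so $\Sigma$ lies in the closure of the fundamental chamber.

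The first step is to check that, taken in isolation, each hyperplane $H_A$ separates the vertices $\{v_i : \alpha_i \in A\}$ from the remaining vertices of $\Sigma$. Using Remark \ref{pia} one computes $(v_i, \delta_A) = a - (v_i, \pi_A)$ and verifies that this quantity exceeds $a - \epsilon_{\dim A}$ exactly when $\alpha_i \in A$. Consequently $H_A \cap \Sigma$ is a parallel copy of the face $F_A := \mathrm{conv}(v_i : \alpha_i \in A)$ sitting inside $\Sigma$, so $\Sigma \cap HS_A$ is combinatorially the truncation of $\Sigma$ along $F_A$. A similar computation, applied to the half-spaces associated with the one-dimensional $A$'s, shows that no point of $H_V$ lying outside $\overline{Ch(\Delta)}$ belongs to $Q$, so $Q \subset \Sigma$.

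The crux of the argument, and the step I expect to be the main obstacle, is to prove that performing all these truncations simultaneously yields exactly the iterated truncation. Organizing the cuts by dimension, set $\Sigma_j := \Sigma \cap \bigcap_{A \in \G_{fund},\, \dim A \leq j} HS_A$; one wants $\Sigma_j$ to coincide with the truncation of $\Sigma_{j-1}$ along the faces at level $j$. This requires showing that (i) each $H_A$ with $\dim A = j$ cuts $\Sigma_{j-1}$ transversally near the face corresponding to $A$, and (ii) the new vertices produced at level $j$ satisfy all the still-unused inequalities at levels $>j$. The suitable-list inequality $\epsilon_j > 2 R^j_{j-1} \epsilon_{j-1}$ is exactly what is needed: an inductive estimate, using the ratios $R^i_{i-1}$ to bound how far a vertex can drift from the face it truncates, shows that the cumulative displacement produced by truncations at levels below $j$ is of order at most $R^j_{j-1} \epsilon_{j-1} < \epsilon_j/2$, keeping each new vertex strictly on the correct side of every $H_B$ with $\dim B \neq j$.

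Once the iterative-truncation description of $Q$ is established, the rest is essentially combinatorial. Maximal chains of truncations correspond to maximal nested sets $\MS$ of $\G_{fund}$; for each such $\MS$ the $n$ hyperplanes $\{H_A : A \in \MS\}$ are linearly independent by Proposition \ref{basenested}, and their common intersection is the unique point $v_\MS$, which is therefore a vertex of $Q$. The face poset of $Q$ under reverse inclusion is isomorphic to $\mathcal{N}(\G_{fund})$, so $Q$ realizes $P_{\G_{fund}}$. Finally, the polytope lies inside the \emph{open} fundamental chamber because every truncation strictly pushes the affected vertex of $\Sigma$ away from the walls on which it sat, thanks to the positivity $b_s \geq 1$ of the coefficients of $\delta_A$ in the $\omega$-basis, as recorded in Remark \ref{pia}.
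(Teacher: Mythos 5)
Your plan — realize $Q$ as an iterated truncation of the simplex $\Sigma$, organized by dimension, in the style of Stasheff--Shnider — is a legitimate route and is genuinely different from the paper's. The paper never sets up the truncation picture: instead it directly characterizes which of the candidate points $v_\T$ are vertices, by proving (Proposition \ref{prop:nonappartiene}) that $v_\T$ is excluded from at least one half-space when $\T$ is not nested, (Proposition \ref{basenested}) that the defining functionals of a nested $\MS$ are independent and $v_\MS$ lies in the open chamber, and (Proposition \ref{teonestoedro}) that $v_\MS$ lies strictly inside $HS_A$ for every $A\notin\MS$; all three rest on Lemma \ref{lemmaepsilon}. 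This direct verification avoids the transversality bookkeeping that an inductive-truncation argument must carry.

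There are, however, two concrete problems with your proposal. First, the orientation of the truncation is reversed. Writing $v_i=t_i\omega_i$ with $t_i>0$ and $\pi_A=\sum_{\alpha_j\in A}d_j\alpha_j$ with all $d_j>0$, one gets $(v_i,\delta_A)=a-t_i d_i(\alpha_i,\alpha_i)/2<a$ when $\alpha_i\in A$ and $(v_i,\delta_A)=a$ when $\alpha_i\notin A$. Since $HS_A=\{(x,\delta_A)\le a-\epsilon_{\dim A}\}$, the vertices thrown \emph{out} by $HS_A$ are precisely those $v_i$ with $\alpha_i\notin A$; the slice removed is near the \emph{complementary} face $\mathrm{conv}(v_i:\alpha_i\notin A)$, not near $F_A$. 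Your sentence ``this quantity exceeds $a-\epsilon_{\dim A}$ exactly when $\alpha_i\in A$'' has the inclusion backwards, and everything downstream of it inherits that sign error. Second, the crux step is asserted rather than proved. You claim that ``the cumulative displacement produced by truncations at levels below $j$ is of order at most $R^j_{j-1}\epsilon_{j-1}<\epsilon_j/2$,'' but a new vertex created by level-$j$ truncations can sit on hyperplanes $H_{B_1},\ldots,H_{B_r}$ with $\dim B_i$ distributed over several lower levels, and the relevant estimate must control $\sum_i R^{j}_{\dim B_i}\epsilon_{\dim B_i}$, not a single term at level $j-1$. That is exactly what Lemma \ref{lemmaepsilon} does, and it needs the chain $R^m_{m-1}R^{m-1}_{m-2}\cdots\ge R^m_{m-k}$ together with $2^k\ge k+1$; a one-line geometric-series bound by $R^j_{j-1}\epsilon_{j-1}$ does not obviously cover it. Without a precise statement of ``cuts $\Sigma_{j-1}$ transversally near the face corresponding to $A$'' — a face of $\Sigma_{j-1}$, not of $\Sigma$, which has already been modified by earlier cuts — and a proof of that statement via the correct sum estimate, the combinatorial conclusion in your last paragraph is unsupported.
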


\begin{teo}
\label{teo:verticipermutoedro}
The  permutonestohedron \(P_\G(\Phi)\) coincides with  the convex hull of all the  nestohedra 
\(\sigma P_{\G_{fund}}\) (\(\sigma \in W\)) that  lie inside the  open chambers. Its vertices are \(\sigma v_\MS\), where \(\sigma \in W\) and   \(\MS\) ranges among the  maximal nested sets in \(\G_{fund}\).
\end{teo}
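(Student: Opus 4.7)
I would prove the theorem by establishing two complementary facts. First, $\sigma P_{\G_{fund}}\subseteq P_\G(\Phi)$ for every $\sigma\in W$, which at once gives that the convex hull of these nestohedra is contained in $P_\G(\Phi)$. Second, every vertex of $P_\G(\Phi)$ is of the form $\sigma v_\MS$ for some $\sigma\in W$ and some maximal nested set $\MS$ in $\G_{fund}$, which together with the fact that a convex polytope is the convex hull of its vertices yields the reverse inclusion and pins down the vertices simultaneously.

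For the first fact, the $W$-invariance of the defining family of half-spaces reduces matters to the inclusion $P_{\G_{fund}}\subseteq P_\G(\Phi)$, and by Theorem~\ref{teo:nestoedro} this amounts to checking that each vertex $v_\MS$ belongs to $\tau HS_A$ and $\tau\overline{HS}_B$ for every $\tau\in W$, $A\in \G_{fund}$, $B\in \mathcal{C}_{\Phi_{fund}}-\G_{fund}$. When $\tau=1$ and $A\in \G_{fund}$ the required inequality is built into Theorem~\ref{teo:nestoedro}. When $\tau=1$ and $B=A_1\oplus\cdots\oplus A_k$ is the $\G_{fund}$-decomposition of a subspace $B\in \mathcal{C}_{\Phi_{fund}}-\G_{fund}$, Remark~\ref{pia} together with Definition~\ref{definHB} produces the identity $\delta_B=\sum_{i=1}^k\delta_{A_i}-(k-1)\delta$, and then
\[
(v_\MS,\delta_B)=\sum_{i=1}^k(v_\MS,\delta_{A_i})-(k-1)(v_\MS,\delta)\leq a-\sum_{i=1}^k\epsilon_{\dim A_i},
\]
as required. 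The delicate case is $\tau\neq 1$: here one expands $v_\MS$ and $\delta_A$ (or $\delta_B$) in the simple-weight basis via Remark~\ref{pia} and then exploits the rapid growth condition $\epsilon_i>2R^i_{i-1}\epsilon_{i-1}$ from Definition~\ref{defsuitablelist}, which is engineered precisely to absorb the worst-case coefficient blow-up introduced by the action of $\tau$.

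For the second fact, $W$-invariance allows one to assume that the vertex $v$ of $P_\G(\Phi)$ lies in $\overline{Ch(\Delta)}$, and the crucial step is to prove $v\in H_V$. Once this is done, the inclusion $P_\G(\Phi)\subseteq HS_A$ for every $A\in \G_{fund}-\{V\}$ combined with Theorem~\ref{teo:nestoedro} yields $P_\G(\Phi)\cap H_V=P_{\G_{fund}}$, so $v$ is a vertex of the nestohedron $P_{\G_{fund}}$ and therefore equals some $v_\MS$. The sub-claim $v\in H_V$ is the main obstacle of the whole argument: assuming $(v,\delta)<a$, the standard inequality $(v,\sigma\delta)\leq (v,\delta)$ for $v\in \overline{Ch(\Delta)}$ rules out every hyperplane $\sigma H_V$ from the active constraints at $v$, so the $n$ linearly independent active constraints at $v$ must all be of the form $\tau H_A$ with $A\neq V$ or $\tau\overline{H}_B$. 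The plan is then to exhibit a direction of movement inside $\overline{Ch(\Delta)}$ along which $v$ stays feasible near every active constraint, contradicting that $v$ is a vertex; constructing this direction again relies on the arithmetic of Definition~\ref{defsuitablelist}, and the detailed computations are naturally deferred to Section~\ref{sec:permutodimostrazioni}.
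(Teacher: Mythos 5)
Your two-part skeleton (show the convex hull of the $\sigma P_{\G_{fund}}$ sits inside $P_\G(\Phi)$, then argue the vertices of $P_\G(\Phi)$ are exactly the $\sigma v_\MS$) is a sensible division, and the identity $\delta_B=\sum_i\delta_{A_i}-(k-1)\delta$ you extract for the $\tau=1$, $\overline{HS}_B$ case is a nice, clean shortcut. However, the way you account for the key case $\tau\neq 1$ misidentifies the mechanism. You assert that one expands in the weight basis and then ``exploits the rapid growth condition $\epsilon_i>2R^i_{i-1}\epsilon_{i-1}$\ldots engineered precisely to absorb the worst-case coefficient blow-up introduced by the action of $\tau$.'' In fact no slack from the $\epsilon$'s is needed at this step. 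The paper's Proposition~\ref{appartenerealpermutonestoedro} writes $\sigma(\omega_j)=\omega_j-\sum_t\beta_{jt}$ with $\beta_{jt}$ positive roots and observes $(\omega_i,\beta_{jt})\geq 0$; this gives $(\delta_A,\sigma v_\MS)=(\delta_A,v_\MS)-(\text{nonnegative})$ for \emph{every} $\sigma$, with strictness controlled by whether some $\beta_{jt}$ lies outside $A$. The inequality is an exact consequence of root-system combinatorics, not an estimate. The $\epsilon$-growth condition enters earlier (Lemma~\ref{lemmaepsilon}, Proposition~\ref{basenested}) only to ensure $v_\MS$ lies in the open fundamental chamber; once that is known, no further use of it is made in comparing $v_\MS$ with $\sigma H_A$.

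For the reverse inclusion your route diverges from the paper's, and as written it has a genuine gap. You propose to reduce to a vertex $v\in\overline{Ch(\Delta)}$, prove $v\in H_V$, and then fall back on Theorem~\ref{teo:nestoedro}. The reduction and the final step are fine, but the crucial sub-claim $v\in H_V$ is only gestured at: you say one should ``exhibit a direction of movement inside $\overline{Ch(\Delta)}$ along which $v$ stays feasible near every active constraint,'' but you give no reason why the active normals (which, ruling out the $\sigma H_V$'s, are of the form $\sigma\delta_A$ or $\sigma\delta_B$) fail to span $V$. That is exactly what needs to be proved and is not automatic. The paper avoids this issue entirely: Propositions~\ref{appartenerealpermutonestoedro} and~\ref{pianispeciali} establish, beyond mere containment, that each defining hyperplane ($H_A$, $\overline{H}_B$, and $\sigma H_V$) is the \emph{affine span} of the $\sigma v_\MS$ lying on it, with an explicit description of which $(\sigma,\MS)$ contribute. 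That stronger statement pins down the facets of $P_\G(\Phi)$ and hence its vertex set, with no separate ``$v\in H_V$'' lemma required. If you want to pursue your direction-of-movement strategy instead, you must actually prove that a vertex $v\in\overline{Ch(\Delta)}$ with $(v,\delta)<a$ leads to a contradiction; as it stands that step is an assertion, not an argument.
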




\subsection{Motivations for the choice of suitable lists}
\label{sec:stasheff}

In the proofs of Theorems \ref{teo:nestoedro}  and \ref{teo:verticipermutoedro} the properties of suitable lists will play a crucial role. This is why we devote this section to suitable lists:  we  will prove  a  key lemma and we will show how  these lists generalize  Stasheff and Shnider's choice of parameters  in their construction of the associahedron in  \cite{stasheffshnider}.

The following lemma introduces an important inequality that is satisfied by   suitable lists and is tied to  the combinatorics of root systems.

\begin{lemma}
\label{lemmaepsilon}
Let \(\epsilon_1<\epsilon_2<\ldots < \epsilon_{n-1}< \epsilon_n=a\) be a suitable list of positive numbers. Let \(B\) be a subspace in \(\G_{fund}\) that can be expressed   as a sum  of some  subspaces \(B_1,B_2,...,B_r\) in \(\G_{fund}\)  (\(r>1\)), and let this sum be  non-redundant, i.e. if we remove anyone of the subspaces \(B_i\) the sum of the others is strictly included in \(B\).
Then we have 
\[\epsilon_{dim \ B}>\sum_{i=1}^r R^{dim \ B}_{dim \ B_i}\epsilon_{dim \ B_i}\]

\end{lemma}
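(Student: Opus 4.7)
The plan is to reduce the inequality to two independent ingredients: a per-summand bound produced by iterating the suitable-list condition, and a Kraft-type inequality for non-redundant subspace sums. Throughout, write $d=\dim B$ and $d_i=\dim B_i$.

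First I would establish a multiplicative bound on the ratios,
\[
R^i_j \le \prod_{k=j+1}^{i} R^k_{k-1}\qquad (i>j).
\]
Given any pair $A\supset B$ in $\G_{fund}$ with $\dim A=i$, $\dim B=j$, one builds a chain $B=C_j\subsetneq C_{j+1}\subsetneq\cdots\subsetneq C_i=A$ inside $\G_{fund}$ by adjoining one simple root at a time; for the minimal building set, connected Dynkin subdiagrams extend one adjacent node at a time, and for the maximal one every subset of simple roots qualifies, while intermediate $W$-invariant building sets are checked by inspection using the classification in \cite{GaiffiServenti2}. Telescoping the ratio $(\max\pi_A)/(\min\pi_B)$ along this chain while using $\min\pi_{C_k}\le\max\pi_{C_k}$ yields the bound. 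Iterating the suitable-list inequality $\epsilon_k>2R^k_{k-1}\epsilon_{k-1}$ from $k=d_i+1$ up to $k=d$ then produces the per-summand estimate
\[
R^{d}_{d_i}\epsilon_{d_i}<\frac{\epsilon_{d}}{2^{d-d_i}}.
\]

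Second, I would prove the Kraft-type inequality $\sum_{i=1}^r 2^{d_i-d}\le 1$ by induction on $r$. For the inductive step, set $B'=B_1+\cdots+B_{r-1}$: the subcover remains non-redundant in $B'$ (otherwise some $B_i$, $i<r$, would already lie in $\sum_{j<r,\,j\ne i}B_j$, contradicting the original non-redundancy), and non-redundancy of $B_r$ forces both $\dim B'\le d-1$ and $\dim B_r\le d-1$. Applying the inductive hypothesis to $B'$,
\[
\sum_{i<r}2^{d_i-d}=2^{\dim B'-d}\sum_{i<r}2^{d_i-\dim B'}\le 2^{\dim B'-d}\le\tfrac12,
\]
and adding $2^{d_r-d}\le\tfrac12$ closes the step.

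Combining the two ingredients,
\[
\sum_{i=1}^r R^d_{d_i}\epsilon_{d_i}<\epsilon_d\sum_{i=1}^r 2^{d_i-d}\le \epsilon_d,
\]
with strict inequality inherited from the per-summand bound. The main obstacle is the multiplicative bound on the $R^i_j$: it depends on the chain property that between any two nested elements of $\G_{fund}$ there are intermediate elements of every dimension, and it is this chain property that ties the whole argument to the combinatorics of root systems and the structure of $W$-invariant building sets.
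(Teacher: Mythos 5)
Your proof is correct and takes a genuinely cleaner route than the paper's, though it rests on the same key ingredient (the chain multiplicativity of the ratios $R^i_j$). The paper's proof isolates the summand of maximum dimension $m-k$: it counts $r\le k+1$ (each $B_i$ contributes a fresh simple root), iterates the doubling condition to get $\epsilon_m > 2^k R^m_{m-1}\cdots R^{m-k+1}_{m-k}\,\epsilon_{m-k}$, invokes $R^m_{m-1}\cdots R^{m-k+1}_{m-k}\ge R^m_{m-k}$ and $2^k\ge k+1\ge r$ to obtain $\epsilon_m > rR^m_{m-k}\epsilon_{m-k}$, and then declares ``straightforward'' the comparison $rR^m_{m-k}\epsilon_{m-k}\ge\sum_i R^m_{d_i}\epsilon_{d_i}$; that last step actually requires chaining the $R$'s down from dimension $m-k$ to each $d_i$, so the paper implicitly uses chain multiplicativity twice. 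Your reorganization avoids singling out the maximal summand: you get a uniform per-summand estimate $R^d_{d_i}\epsilon_{d_i}<2^{d_i-d}\epsilon_d$ and pair it with the dyadic Kraft-type inequality $\sum_i 2^{d_i-d}\le 1$, which you prove by an easy induction on $r$ (peeling off $B_r$, using that both $\dim B'$ and $\dim B_r$ drop by at least one). This is sharper than the paper's $r\le k+1\le 2^k$ count, it is symmetric in the summands, and it makes the final summation immediate rather than ``straightforward.'' What you do \emph{not} buy is a proof of the chain bound $R^i_j\le\prod_{k=j+1}^i R^k_{k-1}$: the paper asserts it without comment and you sketch it via an intermediate-chain argument inside $\G_{fund}$, acknowledging that the chain-existence property must be checked building set by building set; on that one ingredient your proposal and the paper are at the same level of rigor.
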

\begin{proof}
Let \(m=dim \ B\). We notice that by definition of the numbers \(R^i_{j}\) we have:
\[\sum_{i=1}^rR^{m}_{dim \ B_i}\pi_{B_i}\gtrdot \pi_B\]
where \(\alpha \gtrdot \beta\) means that the difference \(\alpha-\beta \) can be expressed as a non negative  linear combination of the simple roots.
Let \(m-k\) be the maximum among the  dimensions of the \(B_i\)'s. Then \(r\leq k+1\) because of the non-redundancy of the \(B_i\)'s: anyone of the \(B_i\)'s contains at least a  simple root which is not contained in the others.
Now \(\epsilon_m>2R^m_{m-1}\epsilon_{m-1}\) by definition, and then recursively we obtain
\[\epsilon_m> 2^kR^m_{m-1}R^{m-1}_{m-2}\cdots R^{m-k+1}_{m-k}\epsilon_{m-k}\]
Since \(R^m_{m-1}R^{m-1}_{m-2}\cdots R^{m-k+1}_{m-k}\geq R^{m}_{m-k}\) and \(2^k\geq k+1\) (when \(k\geq 1\)), this implies: 
\[\epsilon_m> 2^kR^m_{m-k}\epsilon_{m-k}\geq (k+1)R^m_{m-k}\epsilon_{m-k}\geq r R^m_{m-k}\epsilon_{m-k} \]
The final inequality 
\[rR^m_{m-k}\epsilon_{m-k}\geq \sum_{i=1}^r R^{m}_{dim \ B_i}\epsilon_{dim \ B_i}\]
is now straightforward.
\end{proof}

Depending on \(\Phi\) and on the building set \(\G\), there may be  lists of numbers  \(\epsilon_i\) that are not suitable but can be used to construct permutonestohedra, since they  ensure that  the claim of the above lemma is true. 

More generally,  we could construct permutonestohedra using the  following  {\em suitable functions}:
first,  for every  set of subspaces  \(B, B_1,B_2,...,B_r\) as in the statement of the Lemma \ref{lemmaepsilon},  let us choose  numbers \(R^B_{B_i}\) such that 
\[\sum_{i=1}^rR^{B}_{B_i}\pi_{B_i}\gtrdot \pi_B\]
\begin{defin}
A function \(\epsilon\: : \: \G\mapsto \R^+\) is suitable if,  for every  set of subspaces  \(B, B_1,B_2,...,B_r\) in \(\G_{fund}\) as above and for every \(w\in W\),  it satisfies  
\[\epsilon(wB)>\sum_{i=1}^r R^{B}_{B_i}\epsilon(wB_i)\]
\end{defin}
This is the essential property we need in our construction of permutonestohedra. As  it is shown by the Lemma \ref{lemmaepsilon}, given a suitable list of numbers one obtains a suitable function by putting \(\epsilon (C)=\epsilon_{dim \ C}\) for every \(C\in \G\). 

We chose to use suitable lists to make our construction more concrete  and to obtain more symmetry: since the associated suitable function depends only on the dimension of the subspaces, if \(\G\) is \(Aut(\Phi)\)-invariant the automorphism group of the permutonestohedron  includes \(Aut (\Phi)\), as we will show  in Section \ref{sec:autgroup}.
Anyway, the definition of the hyperplanes \(H_A\), \(\overline{H}_B\), and the  proofs of Theorem \ref{teo:nestoedro} and Theorem \ref{teo:verticipermutoedro} in the next sections could be repeated almost verbatim using a suitable function instead of a suitable list. 

A further interesting aspect  of suitable lists and suitable functions is illustrated  by the example of the root system  \(A_1^n\),  that corresponds to the boolean arrangement.

In this case our definition of suitable list consists in   the condition \(\epsilon_i>2\epsilon_{i-1}\) for every \(i>1\).  As for suitable functions, in their definition  we can  choose all the numbers \(R^B_C\) equal to 1. Now let us number from 1 to \(n\) the positive roots of \(A_1^n\).  Then let us denote by  \(\G'\) the \(W=\Z_2^n\)-invariant  building set  made by the subspaces that  are spanned by a set of positive roots whose associated numbers are an {\em interval} in \([1,...,n]\). For this \(\G'\) we have \(\G'=\G'_{fund}\) and the corresponding nestohedron  \(P_{\G'_{fund}}\) that we obtain in the fundamental chamber  is a Stasheff's associahedron. In fact in  this case one immediately checks that our   suitable functions are the same  as  the  suitable functions used by  Stasheff and Shnider in their construction of the associahedron in Appendix B of \cite{stasheffshnider}.  

In  Section 9 of \cite{dosenpetric} Do\v{s}en and Petri\'c describe  a   generalization of  Stasheff and Shnider's construction to all the  nestohedra that  are in the `interval simplex-permutohedron'. These nestohedra are exactly all the nestohedra obtained as \(\G\) varies among the \(\Z_2^n\)-invariant building sets associated to \(A_1^n\) and   our suitable functions for this root system  are compatible with the ones described in \cite{dosenpetric}.

\section{The nestohedron \(P_{\G_{fund}}\)}
\label{sec:nestohedron}

This section is devoted to the proof of Theorem \ref{teo:nestoedro}. We will give a self-contained proof that the hyperplanes  \(H_A\) for \(A\in \G_{fund}\)  define a realization of the  nestohedron  \(P_{\G_{fund}}\); as we have  remarked in Section \ref{sec:stasheff}, our construction is  a generalization of   Stasheff and Shnider's construction of the associahedron in Appendix B of \cite{stasheffshnider}.  We  notice  that other  constructions of   nestohedra  can be found for instance in    \cite{carrdevadoss}, \cite{carrdevadoss2}, \cite{dosenpetric}, \cite{petric2}, \cite{postnikov}, \cite{postnikoreinewilli},   \cite{zelevinski}.   

Our  choice of  the  hyperplanes ensures  that the resulting nestohedron lies inside a chamber of the arrangement. Furthermore  for every  \(A\in \G_{fund}\) the hyperplane \(H_A\) is fixed by the action of \(W_A\).  Thanks to these properties  when  we pass to the global construction, and  consider the convex hull of  all  the nestohedra which lie in the chambers,  the extra facets of  the permutonestohedron \(P_{\G}(\Phi)\) appear.

\begin{prop}
\label{prop:nonappartiene}
Let us consider a subset \(\T\) of \(\G_{fund}\) containing \(V\) and of cardinality \(n\) such that the vectors \(\{\delta_A \: | \: A\in \T\}\)  are linearly independent:  if \(\T\) is not nested the vector \(v_\T\) does not belong  to \(P_{\G_{fund}}\).

\end{prop}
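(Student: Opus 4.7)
The plan is to produce, for any non-nested $\T$ as in the statement, an element $C' \in \G_{fund} - \{V\}$ such that $v_\T$ violates the inequality defining the half-space $HS_{C'}$; this will suffice since $P_{\G_{fund}} \subseteq HS_{C'}$.

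Since $\T$ is not nested, I first select a subset $\{U_1, \ldots, U_h\} \subseteq \T$ of pairwise non-comparable elements (with $h \geq 2$) whose sum $C = U_1 + \cdots + U_h$ belongs to $\G$. Each $U_i \in \G_{fund}$ is spanned by simple roots, so $C$ is as well and thus $C \in \G_{fund}$. By discarding summands whose removal does not change the sum, I may assume the decomposition $C = U_1 + \cdots + U_h$ is non-redundant, so that Lemma \ref{lemmaepsilon} applies. Since $V \in \T$, I also have $(v_\T, \delta) = a$, and consequently $(v_\T, \pi_{U_i}) = \epsilon_{\dim U_i}$ for every $U_i \in \T$.

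I then split into two cases. If $v_\T$ does not lie in the closed fundamental chamber, I pick a simple root $\alpha_j$ with $(v_\T, \alpha_j) < 0$. Now $\langle\alpha_j\rangle \in \A \subseteq \G$ and is spanned by a simple root, hence belongs to $\G_{fund}$; moreover $\pi_{\langle\alpha_j\rangle}$ is a strictly positive multiple of $\alpha_j$, so
\[ (v_\T, \delta_{\langle\alpha_j\rangle}) \;=\; a - (v_\T, \pi_{\langle\alpha_j\rangle}) \;>\; a \;>\; a - \epsilon_1, \]
which shows $v_\T \notin HS_{\langle\alpha_j\rangle}$, as desired.

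If instead $v_\T$ lies in the closed fundamental chamber, then $(v_\T, \alpha_k) \geq 0$ for every simple root $\alpha_k$. The very definition of the $R^A_B$ ensures that $\sum_{i=1}^h R^{\dim C}_{\dim U_i}\, \pi_{U_i} - \pi_C$ is a non-negative combination of simple roots (this is exactly the intermediate step appearing inside the proof of Lemma \ref{lemmaepsilon}), and pairing with $v_\T$ yields
\[ (v_\T, \pi_C) \;\leq\; \sum_{i=1}^h R^{\dim C}_{\dim U_i}\, \epsilon_{\dim U_i} \;<\; \epsilon_{\dim C}, \]
the last inequality being Lemma \ref{lemmaepsilon}. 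If $C \neq V$ this reads $(v_\T, \delta_C) > a - \epsilon_{\dim C}$, so $v_\T \notin HS_C$. The step requiring the most care, which I expect to be the main subtlety, is ruling out $C = V$: but in that case $\pi_V = \delta$ and the displayed inequality degenerates to $a = (v_\T, \delta) < \epsilon_n = a$, a contradiction. Hence a witness with $C = V$ is incompatible with $v_\T$ lying in the closed chamber, and in all situations one of the two cases above produces a violated defining half-space.
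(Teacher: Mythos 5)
Your proof is correct and follows essentially the same strategy as the paper's: you split on whether \(v_\T\) lies in the (closed) fundamental chamber, extract a non-redundant decomposition \(C=U_1+\cdots+U_h\) from the failure of nestedness, and run the same chain \(\sum_i R^{\dim C}_{\dim U_i}\pi_{U_i}\gtrdot\pi_C\) plus Lemma \ref{lemmaepsilon} to get \((v_\T,\pi_C)<\epsilon_{\dim C}\), hence \(v_\T\notin HS_C\). The only organizational difference is that the paper first rules out \(C\in\T\) (which in particular disposes of \(C=V\), since \(V\in\T\)) before exhibiting the violated half-space, whereas you obtain the strict inequality in one shot and single out only \(C=V\); both are sound, because your derived inequality is already incompatible with \(C\in\T\).
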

\begin{proof}
If \(v_\T\) doesn't belong to the open fundamental chamber, it does not belong to  \(P_{\G_{fund}}\).
In fact let us  write  a vector  \(x\in P_{\G_{fund}}\) in terms of the basis \(\omega_i\), 
\(x=\sum b_i\omega_i\);     since \(x\) belongs to \( HS_{<\alpha_i>}\) for every simple root \(\alpha_i\in \Delta\),  we have \((x,\pi_{<\alpha_i>})\geq \epsilon_1>0\), which implies \(b_i>0 \) for every \(i\). \footnote{This has the following interpretation:  after truncating the starting simplex (the one generated in \(H_V\) by the vectors \(v_i\), see Section \ref{definizioneenunciati}) by the hyperplanes \( H_{<\alpha_i>}\) we have a polytope which lies in the fundamental chamber and which, after other truncations, will become \(P_{\G_{fund}}\).}

Let us then consider  the case when \(v_\T\) lies in the open fundamental chamber.

Let \(B\) be a  subspace in \(\G_{fund}\)  that can be expressed as a sum of some (more than 1) subspaces in \(\T\). 
Such  \(B\) exists since \(\T\) is not nested.
Now  let \(B=B_1+\cdots +B_r\) with \(r>1\), \(\{B_1,B_2,...,B_r\}\subset \T\),   be a non-redundant sum (see the statement of Lemma \ref{lemmaepsilon}).


First we show that \(B\notin \T\). Since  \(v_\T\) is  in the open fundamental chamber,  it can be written as \(v_\T=\sum_{i=1,...n}b_i\omega_i\) with all the coefficients \(b_i> 0\).
Then     if \(B\in \T\) we have
\((v_\T, \pi_B)=\epsilon_{dim \ B}\) and \((v_\T, \pi_{B_i})=\epsilon_{dim \ B_i}\).
According to the definition of the numbers \(R^i_j\) we have that  
\[\sum_{i=1,...,r} R^{dim \ B}_{dim \ B_i}\pi_{B_i}\gtrdot \pi_B\]

Then we deduce  that   \((v_\T, \sum_{i=1,...,r} R^{dim \ B}_{dim \ B_i} \pi_{B_i})\geq (v_\T,\pi_B)\), which is a contradiction since \(\epsilon_{dim \ B}> \sum_{i=1,...,r} R^{dim \ B}_{dim \ B_i}\epsilon_{dim \ B_i}\) by Lemma \ref{lemmaepsilon}.

So  we can assume  \(B\notin \T\). We will  show that \(v_\T\notin HS_B\).


We notice that 
\[HS_B\cap H_V=\{x\in H_V \: | \: (x, \delta_B)\leq a-\epsilon_{dim \ B}\}=\]
\[=\{x\in H_V \: | \: (x, \delta -\pi_B)\leq a-\epsilon_{dim \ B}\}=\{x\in H_V \: | \: (x, \pi_B)\geq \epsilon_{dim \ B}\}\]

Let us then check if \(v_\T\) belongs to \(HS_B\).
As we observed before we have 
\[(v_\T, \pi_B)\leq (v_\T,  \sum_{j=1,...,r}R^{dim \ B}_{dim \ B_i}\pi_{B_j})=\sum_{j=1,...,r} R^{dim \ B}_{dim \ B_i}\epsilon_{dim \ B_j}\]
 Now Lemma \ref{lemmaepsilon} implies 
\((v_\T, \pi_B)<   \epsilon_{dim \ B}\),  which  proves that \(v_\T\) does not belong to   \(HS_B\)
(so it does not belong  to \(P_{\G_{fund}}\)).

\end{proof}

\begin{prop} 
\label{basenested}
 If   \(\MS\)  is a nested set of \(\G_{fund}\)  the vectors \(\{\delta_A \: | \: A\in \MS\}\)  are linearly independent. If \(\MS\) is a maximal nested set, the vectors \(\{\delta_A \: | \: A\in \MS\}\) are a basis of \(V\) and  the vectors \(v_\MS\) lie inside the fundamental chamber.

\end{prop}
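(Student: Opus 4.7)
My approach is to prove the two claims separately: first the linear independence of $\{\delta_A : A \in \MS\}$ for any nested $\MS$, then, for maximal $\MS$, the positivity of $v_\MS$ in the open fundamental chamber.

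For linear independence I reduce to a statement about the vectors $\pi_A$. Writing $\delta_A = \delta - \pi_A$ (with the convention $\pi_V = \delta$), any relation $\sum_{A \in \MS} c_A \delta_A = 0$ rearranges to $\bigl(\sum_A c_A\bigr)\delta = \sum_{A \in \MS \setminus \{V\}} c_A\, \pi_A$. The argument then rests on two observations: (a) $\{\pi_A : A \in \MS \setminus \{V\}\}$ is linearly independent, and (b) $\delta$ does not lie in $\sum_{A \in \MS \setminus \{V\}} A$, the subspace containing the right-hand side. For (a) I proceed by induction on the forest structure of $\MS$: at a root $R$ of a subtree, $\pi_R$ expands in the simple-root basis as $\sum_{j \in I_R} d_j \alpha_j$ with all $d_j > 0$ (a standard fact about half-sums of positive roots, applied component-by-component when $R\cap\Phi$ is reducible), while the nested-set condition forces $N(R) := I_R \setminus \bigcup_{C\text{ child of }R} I_C$ to be non-empty; hence some $\alpha_j$ appearing in $\pi_R$ with positive coefficient lies outside $\bigoplus_C C$, so $\pi_R \notin \bigoplus_C C$, whereas the inductive hypothesis places the descendants' $\pi_A$'s inside $\bigoplus_C C$. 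For (b) the nested condition again forces the forest-roots $R_i$ of $\MS \setminus \{V\}$ to satisfy $\bigoplus_i R_i \subsetneq V$, so some $\omega_j$ is orthogonal to $\sum_A A$ but appears in $\delta = \sum_s \omega_s$. Combining (a) and (b) yields $\sum_A c_A = 0$, and then (a) alone gives $c_A = 0$ for $A \neq V$, whence $c_V = 0$.

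For the maximal case, linear independence combined with $|\MS| = n$ and $V \in \MS$ makes $\{\delta_A\}$ a basis of $V$, so $v_\MS$ is uniquely determined by $(v_\MS, \delta_A) = a - \epsilon_{\dim A}$. Using $\delta_A = \delta - \pi_A$, this is equivalent to $(v_\MS, \pi_A) = \epsilon_{\dim A}$ for every $A \in \MS$ (reading $\pi_V = \delta$ and $\epsilon_n = a$). To prove that $v_\MS = \sum_i b_i \omega_i$ has all $b_i > 0$, I use maximality to extract the bijection $i \mapsto A(i)$ sending each simple-root index to the unique $A(i) \in \MS$ with $i \in N(A(i))$; this uses that $\sum_A |N(A)| = n = |\MS|$ forces $|N(A)| = 1$ for every $A$. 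Plugging $\pi_A = \sum_{j \in I_A} d_j^A \alpha_j$ (with $d_j^A > 0$) and $(\omega_k, \alpha_j) = (\alpha_j,\alpha_j)\delta_{kj}/2$ into the equations $(v_\MS, \pi_A) = \epsilon_{\dim A}$ converts them into an upper-triangular linear system (indexed by $\MS$ in root-to-leaf order) for the unknowns $e_A := b_{n_A}(\alpha_{n_A},\alpha_{n_A})/2$, with positive diagonal entries and right-hand sides $\epsilon_{\dim A}$.

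Back-substitution from the forest-leaves upward expresses each $e_A$ as $\epsilon_{\dim A}$ minus a positive combination of smaller $\epsilon_{\dim A'}$ with $A' \subsetneq A$. The coefficients of this combination are controlled by the constants $R^i_j$ that intervene in Lemma \ref{lemmaepsilon}, and the suitability of the list (Definition \ref{defsuitablelist}) is precisely what ensures each $e_A$, hence each $b_{n_A}$, remains strictly positive. I expect the main technical obstacle to lie in matching these back-substitution coefficients to the $R^i_j$ bounds cleanly, whereas the linear-independence half of the proof is comparatively routine once the reduction to the $\pi_A$'s is made.
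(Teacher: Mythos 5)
Your overall strategy matches the paper's: reduce linear independence of the $\delta_A$'s to the $\pi_A$'s via the tree/forest structure of $\MS$, and then establish positivity of the coefficients of $v_\MS$ level by level, invoking Lemma~\ref{lemmaepsilon} to control the $\epsilon$-budget at each node. Your linear-independence argument (forest induction, using that $N(R)\neq\emptyset$ so that $\pi_R$ has a positive $\alpha_j$-coefficient for $j\notin\bigcup_C I_C$) is a clean reformulation of the paper's minimal-counterexample argument and is correct; the part (b) step, that $\delta$ lies outside a proper span of simple roots, also works.

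The gap is in the positivity half, exactly where you flagged uncertainty. The claim that back-substitution ``expresses each $e_A$ as $\epsilon_{\dim A}$ minus a positive combination of smaller $\epsilon_{\dim A'}$'' is false: the exact expression has coefficients of mixed sign. For example, in a chain $D=\langle\alpha_1\rangle\subset C=\langle\alpha_1,\alpha_2\rangle\subset A=\langle\alpha_1,\alpha_2,\alpha_3\rangle$ inside $A_4$, one has $\pi_D=\tfrac12\alpha_1$, $\pi_C=\alpha_1+\alpha_2$, $\pi_A=\tfrac32\alpha_1+2\alpha_2+\tfrac32\alpha_3$, and the back-substituted coefficient of $\epsilon_{\dim D}$ in $e_A$ is $-\tfrac{1}{d^A_A}\bigl(\tfrac{d^A_D}{d^D_D}-\tfrac{d^A_C d^C_D}{d^C_C d^D_D}\bigr)=+\tfrac23>0$, not negative. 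What is true (and is what the paper does, spelled out at level 1) is that one obtains an \emph{inequality}, not an identity: having already proved $b_j>0$ for all $j\in I_{C_i}$ (the children $C_i$ of $A$), one compares $d^A_j\le R^A_{C_i}\,d^{C_i}_j$ for each $j\in I_{C_i}$, sums against the nonnegative quantities $(v_\MS,\alpha_j)$, and uses the equation for $C_i$ to get $\sum_{j\in I_{C_i}}d^A_j(v_\MS,\alpha_j)\le R^A_{C_i}\,\epsilon_{\dim C_i}$. Then Lemma~\ref{lemmaepsilon}, applied to the non-redundant decomposition $A=\langle\alpha_{n_A}\rangle+\sum_i C_i$ (all summands in $\G_{fund}$), gives $\epsilon_{\dim A}>R^{\dim A}_1\epsilon_1+\sum_i R^{\dim A}_{\dim C_i}\epsilon_{\dim C_i}$, whence $d^A_{n_A}(v_\MS,\alpha_{n_A})>R^{\dim A}_1\epsilon_1>0$. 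So you should replace the literal back-substitution identity by this grouping-by-children bound; with that substitution your proof closes and coincides in substance with the paper's.
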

\begin{proof}
It is sufficient to prove the linear independence  for maximal nested sets, since every nested set can be completed to a maximal nested set.

Let then \(\MS\) be a maximal nested set (therefore it contains \(V\)).
As we have already observed, the vectors \(\{\delta_A \: | \: A\in \MS\}\) are linearly independent if and only if the vectors  \(\{\pi_A \: | \: A\in \MS\}\) are linearly independent.  If these vectors are  not linearly independent,  since  for every \(A\) the vector \(\pi_A\) belongs to \(A\), we can   find a minimal  \(C \in \MS\) such that the vectors \(\{\pi_D \: | \: D\in \MS, D\subseteq C\}\) are not linearly independent. Therefore  we have, by nestedness of \(\MS\) and by minimality of \(C\),  a  relation of the form
\[\pi_C=\sum_{D\in \MS, D\subsetneq C}\gamma_D\pi_D\]
This is a contradiction, since \(C\)  contains  a simple root \(\alpha_i\) which is not contained in any \(D\in \MS, D\subsetneq C\) \footnote{Otherwise \(C\) would be equal to the sum of the subspaces \(D\), and this would contradict the nestedness of \(\MS\).}: when \(\pi_C\)   is written in terms of the simple roots \(\alpha_j\), its  \(i-th\) coefficient is >0,  while when we write \(\pi_D\) (\(D\in \MS, D\subsetneq C\)) in terms of the simple roots  the \(i-th\) coefficient is equal to 0.

This proves the linear independence, and therefore  \(v_\MS\) is  well defined.

Let us now prove that \(v_\MS\) lies in the fundamental chamber. Let us consider the  graph   associated to \(\MS\). This graph is a tree and coincides with the Hasse diagram of the poset induced by the  inclusion relation. Therefore it can be considered as  an oriented  rooted tree: the root is \(V\) and the orientation goes from the root to the leaves, that are the minimal subspaces of \(\MS\). 
We observe that we can partition the set of   vertices of the tree into {\em levels}:  level 0 is made by the leaves, and in general,   level \(k\)  is made by the vertices \(v\) such that the maximal length of an oriented  path that connects \(v\) to a leaf is \(k\).

Let \(v_\MS=\sum_{i=1}^nb_i\omega_i\).  Since \(\MS\) is maximal, it contains   at least a subspace of dimension 1. In particular, all the minimal subspaces, i.e. the leaves of the graph,  have dimension 1. Let  \(<\alpha_{i_1}>,...,<\alpha_{i_r}>\) be  the subspaces of dimension 1 in \(\MS\).
From the relation \((v_\MS,\pi_{<\alpha_{i_j}>})=\epsilon_1\) we deduce that \(b_{i_j}>0\) for every \(j=1,\ldots,r\).
Now if  \(A\) is a subspace which in the graph is in  level 1, then it contains some of the leaves, say  \(<\alpha_{i_1}>,...,<\alpha_{i_q}>\). By the maximality of \(\MS\) we deduce that \(dim \ A=q+1\) and we can write \(A=<\alpha_h,\alpha_{i_1}, ...,\alpha_{i_q}>\) where  \(\alpha_h\)  is the only simple root which belongs to \(A\) but does not belong to the leaves of the graph.  Then  
\[\pi_A=c_h\alpha_h+ \sum_{j=1}^{q}a_{j}\pi_{<\alpha_{i_j}>}\]
with   \(c_h>0\)  and \(a_{j}\leq R^{q+1}_1\) for every \(j=1,...,q\). Therefore 
\[ (v_\MS,\pi_A)=\epsilon_{q+1}= c_h (v_\MS,  \alpha_h)+ \sum_{j=1}^{q}a_{j} (v_\MS,   \pi_{<\alpha_{i_j}>})\leq c_h(v_\MS,  \alpha_h)+    qR^{q+1}_1\epsilon_1\]
From Lemma \ref{lemmaepsilon} we know that 
\[\epsilon_{q+1}>   (q+1)R^{q+1}_1 \epsilon_1\]
Then \( c_h(v_\MS, \alpha_h) \) must be \(>0\) and from this we deduce, given that   \(c_h>0\) and \((v_\MS,  \alpha_h)\) is a positive multiple of \(b_h\),  that  \(b_h>0\). In a similar  way, by induction on the level, we  prove that all the coefficients \(b_i\) are \(>0\).

\end{proof}
\begin{prop}
\label{teonestoedro}

Let us consider a maximal nested set  \(\MS\) of \(\G_{fund}\).  For every \(A\in \G_{fund}-\MS\) the vector   \(v_\MS\) belongs to the open part of \(HS_A\), therefore   \(v_\MS\) is a vertex of  \(P_{\G_{fund}}\). 

\end{prop}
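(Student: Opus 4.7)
The plan is to rewrite the claim as the equivalent strict inequality $(v_\MS, \pi_A) > \epsilon_{\dim A}$, using $(v_\MS, \delta) = a$ and $\delta_A = \delta - \pi_A$. Combined with the $n-1$ tight linear equations $(v_\MS, \pi_{A'}) = \epsilon_{\dim A'}$ for $A' \in \MS \setminus \{V\}$ (whose left-hand sides are linearly independent by Proposition \ref{basenested}) and $(v_\MS, \delta) = a$, this strict inequality for every $A \in \G_{fund} - \MS$ will identify $v_\MS$ as the unique common solution of a full-rank system inside $H_V$, hence as a vertex of $P_{\G_{fund}}$.

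I would first exploit the maximality of $\MS$: since $A \notin \MS$, the enlargement $\MS \cup \{A\}$ is not nested, and one can extract elements $A_{i_1}, \ldots, A_{i_k} \in \MS$ (with $k \geq 1$) such that $\{A, A_{i_1}, \ldots, A_{i_k}\}$ is pairwise non-comparable and $B := A + A_{i_1} + \cdots + A_{i_k} \in \G_{fund}$ is a non-redundant sum. The crux --- and the main obstacle --- is to arrange that $B$ itself lies in $\MS$. I would take $B := C(A)$ to be the smallest element of $\MS$ containing $A$ and choose the $A_{i_j}$'s among the children of $B$ in the tree of $\MS$: the maximality of $\MS$ forces, through an argument of the form ``otherwise $\MS$ could be enlarged by inserting the rank one subspace spanned by an uncovered simple root of $B$'', that $A$ together with these children cover all simple roots of $B$, yielding a non-redundant decomposition $B = A + \sum_j A_{i_j}$ entirely inside $\MS$.

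The remainder is a direct computation built on the $\gtrdot$ inequality at the heart of Lemma \ref{lemmaepsilon}: by definition of the constants $R^i_j$,
\[ R^{\dim B}_{\dim A}\,\pi_A \;+\; \sum_j R^{\dim B}_{\dim A_{i_j}}\,\pi_{A_{i_j}} \;\gtrdot\; \pi_B. \]
Pairing with $v_\MS$ preserves this inequality, because Proposition \ref{basenested} shows that $v_\MS$ has strictly positive coordinates in the basis of fundamental weights, whence $(v_\MS, \alpha_i) > 0$ for each simple root $\alpha_i$ and consequently $(v_\MS, x) \geq 0$ for every non-negative combination $x$ of simple roots.

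Finally one substitutes the defining equations of $v_\MS$, namely $(v_\MS, \pi_B) = \epsilon_{\dim B}$ (with the conventions $\pi_V = \delta$, $\epsilon_n = a$) and $(v_\MS, \pi_{A_{i_j}}) = \epsilon_{\dim A_{i_j}}$, to isolate $R^{\dim B}_{\dim A}(v_\MS, \pi_A) \geq \epsilon_{\dim B} - \sum_j R^{\dim B}_{\dim A_{i_j}}\,\epsilon_{\dim A_{i_j}}$. Lemma \ref{lemmaepsilon} applied to the non-redundant decomposition $B = A + A_{i_1} + \cdots + A_{i_k}$ yields the strict bound $\epsilon_{\dim B} > R^{\dim B}_{\dim A}\,\epsilon_{\dim A} + \sum_j R^{\dim B}_{\dim A_{i_j}}\,\epsilon_{\dim A_{i_j}}$, so dividing through by the positive number $R^{\dim B}_{\dim A}$ produces the desired strict inequality $(v_\MS, \pi_A) > \epsilon_{\dim A}$. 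The argument is essentially the mirror image of the proof of Proposition \ref{prop:nonappartiene}, with the role of ``in the witness set'' now played by the auxiliary summands rather than by $A$ itself; the genuinely delicate point is the maximality-based identification of $B \in \MS$, while the chain of inequalities that follows is a verbatim reuse of the technology already developed for Lemma \ref{lemmaepsilon}.
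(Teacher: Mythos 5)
Your overall plan matches the paper's proof: take $C$ to be the minimal element of $\MS$ containing $A$, express $C$ as a non-redundant sum of $A$ and some elements of $\MS$ strictly contained in $C$, then run the $\gtrdot$ inequality through Lemma~\ref{lemmaepsilon} using the strict positivity of the coordinates of $v_\MS$ from Proposition~\ref{basenested}. The reformulation of the claim as $(v_\MS,\pi_A) > \epsilon_{\dim A}$, the choice $B = C(A)$, and the final chain of inequalities are all correct and identical in spirit to what the paper does.

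The one place where your argument is insufficient is precisely the step you flag as delicate: showing that $A$ together with a selection of elements of $\MS$ properly below $C$ forms a non-redundant decomposition of $C$. Your sketch --- ``otherwise $\MS$ could be enlarged by inserting the rank one subspace spanned by an uncovered simple root of $B$'' --- does not go through as stated. A counting argument (a maximal nested set of $\G_{fund}$ containing $V$ has exactly $n$ elements, and the sets of ``private'' simple roots of its members partition $\Delta$) shows that $C$ has a unique simple root $\alpha_i$ not belonging to any $K \in \MS$ with $K \subsetneq C$; what you still need is $\alpha_i \in A$, and inserting $\langle \alpha_i\rangle$ into $\MS$ does not obviously yield a nested set, so maximality cannot be invoked in the way you propose. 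The paper argues differently: if $\alpha_i \notin A$, then every simple root of $A$ already lies in some $K \in \MS$ with $K \subsetneq C$, so $A \subseteq T := \sum K$ (sum over those $K$ with $K \cap A \neq \{0\}$); successively adjoining the $K$'s to $A$ shows $A+T \in \G_{fund}$, and since $A \subseteq T$ this gives $T \in \G_{fund}$, while $T$ strictly contains each summand $K$ (by minimality of $C$), contradicting the nestedness of $\MS$. Once $\alpha_i \in A$ is secured in this way, the non-redundant decomposition exists and the remainder of your computation is a verbatim match with the paper.
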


\begin{proof}
We know by definition that \(v_\MS\) belongs to the hyperplanes \(H_\Gamma\) for every \(\Gamma \in \MS\), therefore to prove  the claim it is sufficient to show that for every \(A\in \G_{fund}-\MS\) the vector  \(v_\MS\) belongs to the open part of \(HS_A\).

The set \(\{A\}\cup \MS\) is not nested since \(\MS\) is a maximal nested set and \(A\) doesn't belong to \(\MS\). 

Let \(C\) be the minimal element in \(\MS\) which strictly contains \(A\) (it could be \(C=V\)).
We observe that there is one (and only one, by the maximality of \(\MS\))  simple root \(\alpha_i\) which belongs to  \(C\) but doesn't belong to any \(K\in \MS\) such that \(K\subsetneq C\). Then \(\alpha_i\) must belong to \(A\): if \(\alpha_i\notin A\), we have   \(A\subseteq T=\sum_{K\in \MS, K\subsetneq C, K\cap A\neq \{0\}}K\). 
We notice that \(T\) strictly includes the subspaces \(K\) such that  \(K\in \MS, K\subsetneq C, K\cap A\neq \{0\}\) because of  the minimality of \(C\). Now, since \(A+T\in \G_{fund}\)\footnote{For any  \(K\) in the sum, \(A+K\in \G_{fund}\) since \(A,K\in \G_{fund}\) and \(A\cap K\neq \{0\}\);  for the same reason,  if we take \( K_1\neq K\) in the sum we have \(A+K+K_1\in \G_{fund}\), and so on..}  this implies  that  \(T\in \G_{fund}\)  which contradicts the nestedness of \(\MS\).

Since \(\alpha_i \in A\) we can find a subset \(\mathcal{I}\) of \(\{K\in \MS \: | \:  K\subsetneq C\}\) such that 
\[C=A+\sum_{K\in \mathcal{I}}K\]
and the sum is non-redundant.
Then we have  
\[ R^{dim \ C}_{dim \ A}\pi_A+\sum_{K\in \mathcal{I}} R^{dim \ C}_{dim \ K} \pi_K\gtrdot \pi_C\]
which implies 
\[\pi_A\gtrdot  \frac{1}{R^{dim \ C}_{dim \ A}}\left( \pi_C-\sum_{K\in \mathcal{I}} R^{dim \ C}_{dim \ K}\pi_K\right )\]

Now  since \(v_\MS\) belongs to the open fundamental chamber (Proposition \ref{basenested}) we have
\[(v_\MS, \pi_A)\geq 
\frac{1}{R^{dim \ C}_{dim \ A}}\left( \epsilon_{dim \ C}-\sum_{K\in \mathcal{I}} R^{dim \ C}_{dim \ K}\epsilon_{dim \ K}\right )\]
We observe that  \(v_\MS\) belongs to the open part of \(HS_A\) if and only  \((v_\MS,\pi_A)>\epsilon_{dim \ A}\); this inequality is verified 
 since,  by Lemma \ref{lemmaepsilon} we have:
\[\epsilon_{dim \ C}> R^{dim \ C}_{dim \ A}\epsilon_{dim \ A}+\sum_{K\in \mathcal{I}} R^{dim \ C}_{dim \ K} \epsilon_{dim \ K}\]

 \end{proof}

The proof of Theorem \ref{teo:nestoedro} is an immediate consequence of the Propositions \ref{prop:nonappartiene}, \ref{basenested}, \ref{teonestoedro}: the faces of dimension \(i\)  are in bijection with the nested sets of cardinality \(n-i\) containing \(V\).

\section{From nestohedra to the permutonestohedron}
\label{sec:permutodimostrazioni}
This section is devoted to the proof of Theorem \ref{teo:verticipermutoedro}. We will split the proof in two steps, given by   following propositions:
\begin{prop}
\label{appartenerealpermutonestoedro}
For every  \(\sigma \in W\),  for every \(A \in \G_{fund}-\{V\}\)  and for every maximal nested set \(\MS\) of \( \G_{fund}\), we have
\[(\delta_A,\sigma v_\MS)\leq (\delta_A,v_\MS)\]
and the equality holds if and only if \(\sigma \in W_A\). 
This means that  \(\sigma v_\MS\) belongs to \(HS_A\), and  it lies in  \(H_A\) if and only if \(A\in \MS \) and \(\sigma \in W_A\); more precisely, \(H_A\) is the affine span of such vectors.
 Furthermore we have 
\[(\delta,\sigma v_\MS)< (\delta,v_\MS)\]
for every \(\sigma\in W\) different from the identity.

\end{prop}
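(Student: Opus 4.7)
The plan is to reduce everything to the classical fact that, for a dominant vector \(\lambda\) and any \(\sigma \in W\), the difference \(\lambda - \sigma^{-1}\lambda\) is a non-negative combination of positive roots, vanishing iff \(\sigma \in \mathrm{Stab}_W(\lambda)\). I would first use Remark \ref{pia} to observe that \(\delta_A = \sum_{s \notin I} b_s \omega_s\) with all \(b_s \geq 1\) (where \(I = \{i : \alpha_i \in A\}\)), so \(\delta_A\) is dominant and lies on exactly the walls indexed by \(I\); the standard theory of stabilizers of points in a fundamental chamber then gives \(\mathrm{Stab}_W(\delta_A) = W_A\). To prove the inequality and its equality case, rewrite \((\delta_A, v_\MS) - (\delta_A, \sigma v_\MS) = (\delta_A - \sigma^{-1}\delta_A, v_\MS)\), expand \(\delta_A - \sigma^{-1}\delta_A\) as a non-negative sum of positive roots, and pair against \(v_\MS\): by Proposition \ref{basenested}, \(v_\MS\) lies in the open fundamental chamber and therefore has strictly positive inner product with every positive root, so the sum is \(\geq 0\) with equality iff every coefficient vanishes, i.e., iff \(\sigma \in W_A\).

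For the geometric statement about \(HS_A\) and \(H_A\), I would split into two cases. If \(A \in \MS\), then \((v_\MS, \delta_A) = a - \epsilon_{\dim A}\) by construction of \(v_\MS\), so the inequality just proved gives \((\sigma v_\MS, \delta_A) \leq a - \epsilon_{\dim A}\), placing \(\sigma v_\MS\) in \(HS_A\) with equality (\(\sigma v_\MS \in H_A\)) precisely when \(\sigma \in W_A\). If \(A \notin \MS\), Proposition \ref{teonestoedro} gives \((v_\MS, \delta_A) < a - \epsilon_{\dim A}\) strictly, and combining with the inequality preserves the strict inequality for every \(\sigma\), so \(\sigma v_\MS\) is strictly interior to \(HS_A\) and never in \(H_A\).

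For the affine span claim, I would fix \(A\) with \(\dim A \geq 1\). For each \(\sigma \in W_A\) and varying maximal nested \(\MS \ni A\), the vectors \(v_\MS\) are the vertices of the facet \(P_{\G_{fund}} \cap H_A\) of the nestohedron (Theorem \ref{teo:nestoedro}) and hence affinely span \(H_V \cap H_A\); applying \(\sigma\) and using that \(H_A\) is \(W_A\)-invariant, the set \(\{\sigma v_\MS : A \in \MS\}\) affinely spans \(\sigma H_V \cap H_A\). A direct computation from \(\sigma \delta = \delta_A + \sigma \pi_A\) (valid for \(\sigma \in W_A\)) identifies this, within \(H_A\), with the hyperplane \(\{x \in H_A : (x, \sigma \pi_A) = \epsilon_{\dim A}\}\). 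Since \(\pi_A\) is strictly dominant for the action of \(W_A\) on \(A\) (being the half-sum of the positive roots of \(\Phi \cap A\)), its \(W_A\)-stabilizer is trivial, so varying \(\sigma\) over \(W_A\) (of order \(\geq 2\)) produces at least two distinct hyperplanes inside \(H_A\), and the affine span of two distinct hyperplanes in \(H_A\) is all of \(H_A\). The final strict inequality \((\delta, \sigma v_\MS) < (\delta, v_\MS)\) for \(\sigma \neq 1\) is the same argument with \(\delta\) in place of \(\delta_A\): \(\delta\) is strictly dominant with trivial stabilizer, so the inequality is strict for every non-identity \(\sigma\).

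The most delicate step is the affine span claim: a dimension count with a single \(\MS\) only produces an affine subspace of dimension \(\dim A\) inside the \((n-1)\)-dimensional \(H_A\), so one really needs to combine the freedom to vary \(\MS\) (which spans the codimension-\(1\) face \(H_V \cap H_A\)) with the freedom to vary \(\sigma \in W_A\) (which rotates this face into distinct positions). All the remaining parts of the proof are direct consequences of the dominant-vector inequality together with Propositions \ref{basenested} and \ref{teonestoedro}.
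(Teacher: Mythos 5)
Your proof is correct, and while the overall strategy (pair the dominant vector \(\delta_A\) against the regular dominant vector \(v_\MS\)) is the same as the paper's, two steps are handled differently. For the inequality and its equality case, the paper writes \(\sigma v_\MS = v_\MS - \sum_j a_j \sum_t \beta_{jt}\) using the expansion \(\sigma(\omega_j) = \omega_j - \sum_t \beta_{jt}\), pairs this against \(\delta_A\), and argues through a footnote (using a reduced expression for \(\sigma\)) that some \(\beta_{rs}\) has positive inner product with \(\delta_A\) when \(\sigma \notin W_A\); you instead transpose \(\sigma\) to the other side, write \(\delta_A - \sigma^{-1}\delta_A\) as a non-negative sum of positive roots, and invoke directly that \(\mathrm{Stab}_W(\delta_A) = W_A\) because \(\delta_A\) lies exactly on the walls indexed by \(A\). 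These are dual forms of the same fact, but your version dispenses with the hands-on footnote argument. For the affine span, the paper proves \(T + A = H_A\) (where \(T\) is the affine span of the facet \(P_{\G_{fund}} \cap H_A\)) and then obtains the \(A\)-directions from the relations \(\sigma_{\alpha_i} v_\MS - v_\MS \in \R\alpha_i\) for simple roots \(\alpha_i \in A\); you instead observe that each fixed \(\sigma \in W_A\) yields the full hyperplane \(\sigma H_V \cap H_A\) of \(H_A\), that these hyperplanes vary with \(\sigma\) because \(\pi_A\) has trivial \(W_A\)-stabilizer, and that two distinct hyperplanes of \(H_A\) already affinely span \(H_A\). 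Both affine-span arguments are valid; the paper's is more constructive (it identifies the explicit directions gained), while yours is shorter but relies on the extra observation that \(\pi_A\) is strictly \(W_A\)-dominant. The rest of your proof (the \(A \in \MS\) versus \(A \notin \MS\) dichotomy using Proposition \ref{teonestoedro}, and the \(A = V\) case with \(\delta\) regular) matches the paper.
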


\begin{proof}
Let us consider \(A \in \G_{fund}-\{V\}\) (in the case \(A=V\) the proof is similar).
Let \(I=\{\alpha_{i_1}, \alpha_{i_2},\ldots,  \alpha_{i_k}\}\) be the set of simple roots that  belong to \(A\).
Then  \(\delta_A=\sum_{i\in \Delta-I}b_i\omega_i\) with all the \(b_i>0\) (Remark \ref{pia}).


Since \(v_\MS\) lies inside the open fundamental chamber, we can write 
\(v_\MS=\sum_{j\in \Delta}a_j\omega_j\) with all the \(a_j>0\).
As it is well known (see for instance \cite{bourbaki4}, \cite{humphreyscoxeter}), 
\[\sigma(\omega_i)=\omega_i-\sum_{t=1}^{s_i}\beta_{it}\]
where the \(\beta_{it}\) are positive roots and \(s_i\in \N\): as a notation, when \(\sigma(\omega_i)=\omega_i\) we put \(s_i=0\) and the sum is empty.
Then we have:

\[(\delta_A,\sigma v_\MS)= (\sum_{i\in \Delta-I}b_i\omega_i, \sum_{j\in \Delta}a_j (\omega_j-\sum_{t=1}^{s_j}\beta_{jt}))=\]
\[=(\delta_A,v_\MS)-(\sum_{i\in \Delta-I}b_i\omega_i, \sum_{j,t}\beta_{jt})\]
The  scalar product \[(\sum_{i\in \Delta-I}b_i\omega_i, \sum_{j,t}\beta_{jt})\] is easily seen to be \(\geq 0\) since \((\omega_i,\beta_{jt})\geq 0\). 

If \(\sigma \in W_A\) then all the roots \(\beta_{jt}\) belong to \(A\), therefore 
\[(\sum_{i\in \Delta-I}b_i\omega_i, \sum_{j,t}\beta_{jt})=0.\] 
Otherwise,  at least one of the positive roots, say  \(\beta_{rs}\), does not belong to \(A\),\footnote{In fact let \(\sigma=\sigma_{\alpha_{i_1}}\sigma_{\alpha_{i_2}} \cdots \sigma_{\alpha_{i_k}} \) be a reduced expression for \(\sigma\), and let \(r\) be the smallest  index such that \(\alpha_{i_r}\) does not belong to \(I\).  Then  for every \(j\) the roots \(\beta_{jt}\) which appear in the proof are among  the roots:
\[\sigma_{\alpha_{i_1}}\sigma_{\alpha_{i_2}} \cdots \sigma_{\alpha_{i_{k-1}}}\alpha_{i_k}, \quad   \sigma_{\alpha_{i_1}}\sigma_{\alpha_{i_2}} \cdots \sigma_{\alpha_{i_{k-2}}}\alpha_{i_{k-1}},\quad  \ldots \quad,  \sigma_{\alpha_{i_1}}\sigma_{\alpha_{i_2}} \cdots \sigma_{\alpha_{i_{r-1}}}\alpha_{i_{r}}, ....\]
In particular  the root
\(\sigma_{\alpha_{i_1}}\sigma_{\alpha_{i_2}} \cdots \sigma_{\alpha_{i_{r-1}}}\alpha_{i_{r}}\) is one of the roots \(\beta_{r,t}\) and it satisfies
\[(\sigma_{\alpha_{i_1}}\sigma_{\alpha_{i_2}} \cdots \sigma_{\alpha_{i_{r-1}}}\alpha_{i_{r}},\omega_{i_r})=(\alpha_{i_{r}},\omega_{i_r})>0\]} 
and we have:
\[(\sum_{i\in \Delta-I}b_i\omega_i, \beta_{rs})>0\] 
It remains to prove that \(H_A\) is the affine span of the vectors \(\sigma v_\MS\) with \(\sigma \in W_A\) and \(A\in \MS\).
The  vectors  \(v_\MS\) with   \(A\in  \MS\)  are all the vertices   of a (\(n-2\))-dimensional face of the nestohedron \(P_{\G_{fund}}\). The affine span of this face  is the    subspace  \(T\) whose elements satisfy the relations \((x,\delta_{})=a\) and  \((x,\pi_{A})=\epsilon_{dim \ A}\).  Then   \(T+A\) coincides with  the  hyperplane \(H_A\) defined by the  relation \((x,\delta_A)=a-\epsilon_{dim \ A}\).

Now the   vectors \(\sigma v_\MS\) with \(A\subset \MS\) and \(\sigma\in W_A\) lie on  \(T+A\) and, since  for every simple root \(\alpha_i\) which  belongs to \(A\), we have that \(\sigma_{\alpha_i} v_\MS -v_\MS\)  is a non zero scalar multiple of \(\alpha_i\), the affine span of these vectors  coincides with \(T+A=H_A\).
\end{proof}

\begin{lemma}
\label{piB}
Let \(B\) be a subspace which does not belong to \(\G_{fund}\) and is generated by some of the simple roots and let  \(B=A_1\oplus A_2\oplus \cdots \oplus A_k\) be its  \(\G_{fund}\)-decomposition.
Then the  subspaces \(A_1,  A_2, \cdots , A_k\) are pairwise orthogonal. 
The vector \(\delta_B=\delta-\sum_{i=1}^k\pi_{A_i}\) of Definition \ref{definHB} can be obtained as 
\[\delta_B=\frac{1}{| W_{A_1}\times W_{A_2} \times \cdots  \times W_{A_k}  |}\sum_{\sigma \in W_{A_1}\times W_{A_2} \times \cdots  \times W_{A_k}} \sigma \delta\]
Furthermore, if  \(I\) is  the set of indices given  by the  \(i\in\{1,2,...,n\}\) such that \(\alpha_i\in B\), we have  
\[\delta_B=\sum_{s\in \{1,2,...,n\}-I}c_s\omega_s\]  with all \(c_s>0\).

\end{lemma}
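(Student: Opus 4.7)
The plan is to address the three assertions in order.

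\textbf{Orthogonality of the \(A_i\).} Since each \(A_i\in\G_{fund}\), it is spanned by the set \(I_i\) of simple roots it contains, and \(\dim A_i=|I_i|\). Because \(B=A_1\oplus\cdots\oplus A_k\) is a direct sum, dimensions add and \(I=I_1\sqcup\cdots\sqcup I_k\), where \(I\subseteq\Delta\) is the set of simple roots generating \(B\). Suppose for a contradiction that some \(\alpha\in I_i\) and \(\beta\in I_j\) with \(i\neq j\) satisfy \((\alpha,\beta)\neq 0\): then \(\{\alpha,\beta\}\) spans an irreducible rank-\(2\) root subsystem, so \(\langle\alpha,\beta\rangle\in\F_\Phi\subseteq\G_{fund}\) (the irreducibles form the minimum building set). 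Being contained in \(B\), the subspace \(\langle\alpha,\beta\rangle\) must lie in one of the maximal \(\G_{fund}\)-elements \(A_\ell\) of \(B\). But then \(\alpha\in A_\ell\cap A_i\), and directness of the sum forces \(\ell=i\); hence \(\beta\in A_i\), contradicting \(\beta\in I_j\), \(j\neq i\).

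\textbf{The averaging formula.} The orthogonality just proved implies that the parabolic subgroups \(W_{A_j}\) pairwise commute, and that every reflection \(s_\alpha\) with \(\alpha\in A_j\cap\Phi\) fixes \(A_i\) pointwise for \(i\neq j\) (since \(A_i\subseteq\alpha^\perp\)). Consequently each \(\sigma\in W_{A_j}\) fixes \(\pi_{A_i}\) whenever \(i\neq j\). Starting from the innermost averaging \(\tfrac{1}{|W_{A_1}|}\sum_{\sigma_1\in W_{A_1}}\sigma_1\delta=\delta_{A_1}=\delta-\pi_{A_1}\) and then averaging successively over \(W_{A_2}\), \(W_{A_3}\), \ldots, at the \(j\)-th step the vectors \(\pi_{A_1},\ldots,\pi_{A_{j-1}}\) stay fixed while the contribution of \(\delta\) is replaced by \(\delta-\pi_{A_j}\); so a new term \(\pi_{A_j}\) is subtracted at each step, and the final average over the product group equals \(\delta-\sum_{i=1}^k\pi_{A_i}=\delta_B\).

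\textbf{Sign of the weight coordinates.} By Remark~\ref{pia}, for each \(i\) one can write
\[\pi_{A_i}=\sum_{s\in I_i}\omega_s-\sum_{s\in J_i}c_{i,s}\omega_s\]
with all \(c_{i,s}>0\), where \(J_i\subseteq\Delta-I_i\) is the set of indices \(s\) such that \(\alpha_s\) is non-orthogonal to some root of \(A_i\). The orthogonality established above shows that for \(j\neq i\) the simple roots in \(I_j\) are orthogonal to \(I_i\), so \(J_i\cap I_j=\emptyset\); combined with \(J_i\cap I_i=\emptyset\) this yields \(J_i\subseteq\Delta-I\). Using \(\delta=\sum_{s\in\Delta}\omega_s\) we then obtain
\[\delta_B=\delta-\sum_{i=1}^k\pi_{A_i}=\sum_{s\in\Delta-I}\omega_s+\sum_{i=1}^k\sum_{s\in J_i}c_{i,s}\omega_s,\]
so the \(\omega\)-coefficients indexed by \(I\) vanish, and every coefficient \(c_s\) with \(s\notin I\) is at least \(1\).

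The main obstacle is the first step: pairwise orthogonality is not automatic from a direct-sum decomposition of subspaces, and the argument essentially requires combining the \(W\)-invariance of \(\G\) (so that \(\F_\Phi\subseteq\G\)) with maximality of the \(A_\ell\) in the \(\G_{fund}\)-decomposition in order to produce the contradicting irreducible rank-\(2\) subspace. Once orthogonality is in hand, Steps 2 and 3 are essentially bookkeeping based on Remark~\ref{pia} and the defining identity for \(\delta_A\).
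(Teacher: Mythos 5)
Your proof is correct and follows essentially the same route as the paper: you prove pairwise orthogonality by noting that a non-orthogonal pair of simple roots from different summands would span an irreducible two-dimensional subspace lying in $\G_{fund}$ and contained in $B$, contradicting the maximality inherent in the $\G_{fund}$-decomposition. The paper's proof stops there and declares the averaging and positivity claims to follow easily from orthogonality and Remark~\ref{pia}; you have simply spelled out that bookkeeping explicitly (iterated averaging using that $W_{A_j}$ fixes $\pi_{A_i}$ pointwise for $i\neq j$, and the computation with $J_i\subseteq\Delta-I$), which is sound.
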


\begin{proof}
As for the orthogonality of the subspaces \(A_i\), we notice that if  two simple roots \(\alpha, \beta\) satisfy  \(\alpha \in A_1\), \(\beta\in A_2\) and \((\alpha,\beta)<0\), then \(<\alpha,\beta>\) is an irreducible subspace. Therefore   it belongs to \(\G_{fund}\), and this contradicts that \(B=A_1\oplus A_2\oplus \cdots \oplus A_k\) is a \(\G_{fund}\)-decomposition.
The claims on \(\delta_B\) easily follow  from the orthogonality of the subspaces \(A_i\)  and from the formula for   the vectors  \(\pi_A\)  in   Remark \ref{pia}.
\end{proof}

\begin{prop}
\label{pianispeciali}
Let \(B\) be a subspace which does not belong to \(\G_{fund}\) and is generated by some of the simple roots and let  \(B=A_1\oplus A_2\oplus \cdots \oplus A_k\) be its  \(\G_{fund}\)-decomposition. Then for every  \(\sigma \in W\) and for every maximal nested set \(\MS\) of \( \G_{fund}\) the vertices    \(\sigma v_\MS \) lie in the half-space \(\overline{ HS}_B\). They lie  on the  hyperplane \(\overline{H}_B\) if and only if   \(\{A_1,A_2,...,A_k\}\subset \MS\) and \(\sigma\in W_{A_1}\times W_{A_2} \times \cdots \times W_{A_k}\) and     \(\overline{H}_B\) is the affine span of all such vertices.
\end{prop}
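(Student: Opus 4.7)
I would prove the half-space inclusion $\sigma v_\MS\in\overline{HS}_B$ and the characterization of equality by factoring
\[(\delta_B,\sigma v_\MS)\ \leq\ (\delta_B,v_\MS)\ \leq\ a-\sum_{i=1}^k\epsilon_{\dim A_i},\]
analyzing the two inequalities separately. Then the affine span claim reduces to a dimension count.

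The right-hand inequality is an immediate consequence of Theorem \ref{teo:nestoedro}. Writing $\delta_B=\delta-\sum_i\pi_{A_i}$ (Definition \ref{definHB}) and using $(v_\MS,\delta)=a$, one gets $(\delta_B,v_\MS)=a-\sum_i(v_\MS,\pi_{A_i})$; since $v_\MS\in P_{\G_{fund}}\subset HS_{A_i}$, the inequality $(v_\MS,\pi_{A_i})\geq\epsilon_{\dim A_i}$ holds for every $i$, with equality iff $v_\MS\in H_{A_i}$, iff $A_i\in\MS$ by Proposition \ref{teonestoedro}. So equality on the right characterizes exactly $\{A_1,\ldots,A_k\}\subset\MS$.

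The left-hand inequality is the main point and I would handle it by a standard dominance argument for Coxeter groups. By Lemma \ref{piB}, $\delta_B=\sum_{s\notin I}c_s\omega_s$ with every $c_s>0$, so $\delta_B$ lies in the closure of the fundamental chamber and its $W$-stabilizer is exactly the parabolic $W_B=W_{A_1}\times\cdots\times W_{A_k}$. For any $w\in W$, the dominance property of elements in the closed fundamental chamber yields $\delta_B-w\delta_B=\sum_j d_j\alpha_j$ with all $d_j\geq 0$. Pairing with $v_\MS$, which lies in the \emph{open} fundamental chamber by Proposition \ref{basenested}, so that $(v_\MS,\alpha_j)>0$ for every simple root, gives
\[(v_\MS,\delta_B)-(v_\MS,w\delta_B)=\sum_j d_j(v_\MS,\alpha_j)\geq 0,\]
with equality iff every $d_j=0$, iff $w\delta_B=\delta_B$, iff $w\in W_B$. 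Setting $w=\sigma^{-1}$ and using $(\delta_B,\sigma v_\MS)=(\sigma^{-1}\delta_B,v_\MS)$ delivers the claim. Combining the two factorizations, equality throughout amounts exactly to the conjunction $\sigma\in W_B$ and $\{A_1,\ldots,A_k\}\subset\MS$, which proves the inclusion in $\overline{HS}_B$ and the characterization of vertices lying on $\overline{H}_B$.

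For the affine span assertion, I would fix any maximal nested set $\MS_0\supset\{A_1,\ldots,A_k\}$ (which exists, by completing this nested collection in $\G_{fund}$) and consider the face $F$ of $P_{\G_{fund}}$ with vertex set $\{v_\MS:\{A_1,\ldots,A_k\}\subset\MS\text{ maximal}\}$. This face is $(n-1-k)$-dimensional and lies in $H_V\cap\bigcap_iH_{A_i}\subset\overline{H}_B$. Since $W_B$ fixes $\delta_B$, the whole orbit $W_B\cdot F$ lies in $\overline{H}_B$. The direction space of $F$ is $\bigl(\mathrm{span}(\delta_B,\pi_{A_1},\ldots,\pi_{A_k})\bigr)^\perp$, of dimension $n-1-k$, and intersects $B$ in a subspace of dimension $\dim B-k$ (since $\delta_B\in B^\perp$ and $\pi_{A_i}\in B$ are linearly independent by the orthogonality in Lemma \ref{piB}); the $W_B$-orbit of any interior point of $F$ sweeps out all of $B$, because $W_B$ acts essentially on $B$ and $v_\MS$ projects to the interior of the fundamental $W_B$-chamber on $B$. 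The combined affine span therefore has dimension $(n-1-k)+\dim B-(\dim B-k)=n-1=\dim\overline{H}_B$, forcing equality. The main obstacle is keeping this orthogonal-decomposition dimension count straight; the dominance inequality is largely bookkeeping once the correct $\delta_B$-expression from Lemma \ref{piB} is in hand.
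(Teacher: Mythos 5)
Your proposal is correct and recovers all three parts of the statement. The substance (Lemma \ref{piB}, $W_B$-invariance of $\delta_B$, positivity of $v_\MS$ in the open chamber, the suitable-list inequality from Proposition \ref{teonestoedro}) is the same as the paper's, but the organization is genuinely different and cleaner. The paper proves the proposition by a case analysis (first $\gamma=e$ with $\T$ not containing all the $A_i$, then $\gamma\neq e$, then $\T\supset\{A_i\}$ but $\gamma\notin W_B$), each time deferring to the expansion $\sigma\omega_i=\omega_i-\sum_t\beta_{it}$ from Proposition \ref{appartenerealpermutonestoedro}. You instead factor the target inequality into
\[(\delta_B,\sigma v_\MS)\ \leq\ (\delta_B,v_\MS)\ \leq\ a-\textstyle\sum_i\epsilon_{\dim A_i},\]
identify the right-hand step with Proposition \ref{teonestoedro} (equality iff $\{A_i\}\subset\MS$), and handle the left-hand step by the standard dominance lemma applied to $\delta_B$: since $\delta_B$ is a non-negative combination of fundamental weights with $W$-stabilizer exactly $W_B$, one has $\delta_B-w\delta_B=\sum_j d_j\alpha_j$ with $d_j\geq0$, and pairing against the strictly dominant $v_\MS$ forces equality iff $w\in W_B$. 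This is essentially the dual formulation of the paper's $\sigma\omega_i=\omega_i-\sum\beta_{it}$ computation, but it packages the argument so that the full "if and only if" characterization drops out in one pass with no case split, and it makes the proof self-contained rather than deferring to another proposition's internals. Your dimension count for the affine span ($\dim(\mathrm{dir}\,F)+\dim B-\dim(\mathrm{dir}\,F\cap B)=n-1$) is a correct reformulation of the paper's footnote computation of $\dim((T-v_\MS)+B)$; the one place you are slightly more compressed than the paper is in asserting that the $W_B$-orbit of $v_\MS$ has affine span $v_\MS+B$, where the paper spells out that $\sigma_{\alpha_i}v_\MS-v_\MS$ is a nonzero multiple of $\alpha_i$ for each simple $\alpha_i\in B$ — your appeal to $v_\MS$ projecting to the interior of the $W_B$-chamber of $B$ is equivalent and fine, but that explicit observation would be the cleanest way to justify it.
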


\begin{proof}
The vertices \(v_\MS\) with   \(\{A_1,A_2,...,A_k\}\subset \MS\)  are all the vertices   of a (\(n-1-k\))-dimensional face of the nestohedron \(P_{\G_{fund}}\). The affine span of this face  is the    subspace  \(T\) whose elements satisfy the relations \((x,\delta_{})=a\) and  \((x,\pi_{A_j})=\epsilon_{dim \ A_j}\) for every \(j=1,...,k\). Then   \(T+B\) is the  hyperplane\footnote{One can check that the  dimension of \((T-v_\MS)+ B\) is equal to \(n-1\) by checking that the dimension of \((T-v_\MS)\cap B\) is  \(dim \ B -k\). In fact  a vector in \(B\)  belongs   to \((T-v_\MS)\) if and only if it satisfies the independent equations \((x,\pi_{A_j})=0\) for every \(j=1,...,k\) (from Lemma \ref{piB} one deduces that  the equation \((x,\delta)=0\) is dependent on  these for the vectors in \(B\)).  } \(\overline{H}_B\) whose  elements are subject to the  relation \((x,\delta_B)=a-\epsilon_{dim \ A_1}-\epsilon_{dim \ A_2}- \cdots - \epsilon_{dim \ A_k}\).

As shown in  Lemma \ref{piB}, \(\delta_B\) is a scalar multiple of \(\sum_{\sigma \in W_{A_1}\times W_{A_2} \times \cdots \times W_{A_k}} \sigma \delta\). Therefore    for every \(\sigma\in W_{A_1}\times W_{A_2} \times \cdots \times W_{A_k}\) we have 
\[(  \sigma v_\MS,\delta_B)= (  v_\MS,\sigma^{-1}\delta_B)=(  v_\MS,\delta_B)\]
and  the   vectors \(\sigma v_\MS\) with \(\sigma\) as above lie on  the hyperplane \(\overline{H}_B\).

  Their affine span is exactly \(T+B=\overline{H}_B\) since the vertices  \(v_\MS\) span \(T\) and, for every simple root \(\alpha_i\) which  belongs to \(B=A_1\oplus A_2\oplus \cdots \oplus A_k\), we have that \(\sigma_{\alpha_i} v_\MS -v_\MS\)  is a non zero scalar multiple of \(\alpha_i\). 

Let us now  prove that for every \(\gamma\in W\) and for every maximal nested set  \(\T\) in \(\G_{fund}\) which does not contain  \(\{A_1,A_2,...,A_k\}\) we have:
\[(\gamma v_\T,\delta_B)<a-\epsilon_{dim \ A_1}-\epsilon_{dim \ A_2}- \cdots - \epsilon_{dim \ A_k}\]
This  inequality is equivalent to 
\[(\gamma v_\T,\pi_B)>\epsilon_{dim \ A_1}+\epsilon_{dim \ A_2}+ \cdots + \epsilon_{dim \ A_k}\]
where \(\pi_B=\delta-\delta_B=\pi_{A_1}+ \cdots + \pi_{A_k}\).
We first prove this when \(\gamma=e\).

For every \(i=1,2,...,k\) let  \(C_i\) be the minimal subspace in \(\T\) which contains \(A_i\). We notice that at least for one index \(j\) we have  \(C_j\neq A_j\) because  \(\T\)   does not contain  \(\{A_1,A_2,...,A_k\}\).
 Then as in the proof of Proposition  \ref{teonestoedro} we deduce that 
\((v_\T,\pi_{A_j})> \epsilon_{dim \ A_j}\)
 because the list  of the numbers  \(\epsilon_j\) is suitable.  This easily leads to prove that  \((v_\T,\pi_B) >\epsilon_{dim \ A_1}+\epsilon_{dim \ A_2}+ \cdots + \epsilon_{dim \  A_k}\).

When \(\gamma\neq e\), since \(\delta_B=\sum_{i \ s.t. \alpha_i\notin B}c_i\omega_i\)  with all \(c_i>0\) (Lemma \ref{piB}) we can conclude, as in the first part of the proof of Proposition  \ref{appartenerealpermutonestoedro},  that 
\[(\gamma v_\T ,\delta_B)\leq (v_\T,\delta_B).\]

It remains to prove the claim when the  maximal nested set  \(\T\) contains   \(\{A_1,A_2,...,A_k\}\)   but \(\gamma\notin   W_{A_1}\times W_{A_2} \times \cdots \times W_{A_k}\); this is essentially the same reasoning as in  the second part of the proof of Proposition  \ref{appartenerealpermutonestoedro}. 
\end{proof}

Propositions \ref{appartenerealpermutonestoedro} and \ref{pianispeciali} determine the vertices of the permutonestohedron and  prove  Theorem \ref{teo:verticipermutoedro}.

\section{The face poset of the permutonestohedron}
\label{sec:poset}
In this section we will give a description of the  full  face poset of the permutonestohedron. We will  improve and complete  the  description that was first sketched   in   \cite{gaimrn},  where a non linear realization of the permutonestohedron appeared.




The faces of \(P_\G(\Phi)\) are in bijective correspondence with the pairs \((\sigma H, \MS)\), where: 
\begin{itemize}
\item \(\MS\) is a nested set of \(\G_{fund}\) that  contains \(V\) and has  labels attached to  its minimal elements: if \(A\) is a minimal element, its label is either the subgroup \(W_A\) of \(W\) or the trivial subgroup \(\{e\}\). For brevity in the sequel we will omit to write the label \(\{e\}\)  and we will write \(\underline{A}\) to indicate that \(A\) is labelled by \(W_A\);  

\item \(\sigma H\) is a coset of \(W\), where \(H\) is the subgroup of \(W\) given by the direct product of all the labels.

\end{itemize}

This bijective correspondence is motivated in the following way:  to obtain the face represented by   \((\sigma H, \MS)\) one starts from the face  \(F\) of the nestohedron  \(P(\G_{fund})\) which is associated with the nested set \(\MS\). Then one considers all the images of this face under the action of the elements of \(H\) and takes their convex hull. This gives a face \(F'\) of the permutonestohedron 
which intersects the fundamental chamber (see Proposition \ref{faccedavvero} below). Then \(\sigma F'\) is the face associated with the pair \((\sigma H, \MS)\).

 \begin{prop}
 \label{faccedavvero}
  Let \(\MS\) be  a nested set of \(\G_{fund}\) which contains \(V\) and has labels attached to its minimal elements, with at least one nontrivial label. Let \(A_1\),\(A_2\),...,\(A_k\) the subset of its  minimal elements that  have a nontrivial label and let \(H=W_{A_1}\times W_{A_2} \times \cdots \times W_{A_k}\). Let us then consider the face \(F\) of the nestohedron  \(P(\G_{fund})\) associated with \(\MS\), and let \(F'\) be the convex hull of all the faces \(hF\) with \(h\in H\). Then \(F'\) is the (\(|\MS|-k\))-codimensional face of \(P_{\G}(\Phi)\)  determined  by  the  intersection of the defining hyperplanes associated with \(A_1\oplus A_2\oplus \cdots \oplus A_k\) and \(B+(A_1\oplus A_2\oplus \cdots \oplus A_k)\) for every \(B\in \MS-\{V,A_1, A_2,...,A_k\}\). 
 \end{prop}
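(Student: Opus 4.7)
The plan is to identify $F'$ with the face of $P_\G(\Phi)$ cut out by the stated $|\MS|-k$ hyperplanes. Set $\tilde A := A_1\oplus\cdots\oplus A_k$ and, for every $B \in \MS\setminus\{V,A_1,\dots,A_k\}$, $\tilde A_B := B+\tilde A$. I will show in three steps that (i) every vertex $h v_\T$ of $F'$ satisfies each of these hyperplane equations, (ii) the hyperplanes are linearly independent, so their intersection has codimension exactly $|\MS|-k$, and (iii) no other vertex of $P_\G(\Phi)$ satisfies all of them; these together force the face to coincide with $F'$ of the claimed codimension.

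For (i), note first that the pairwise noncomparable minimal elements $A_1,\dots,A_k$ of the nested set $\MS$ have $\{A_1,\dots,A_k\}$ as the $\G_{fund}$-decomposition of $\tilde A$ (an immediate consequence of the nested-set axioms and the building set closure property). Hence Proposition \ref{pianispeciali} (or Proposition \ref{appartenerealpermutonestoedro} when $k=1$) applies directly to the $\tilde A$-hyperplane. For each $\tilde A_B$, nestedness forces either $A_i\subseteq B$ or $A_i\cap B=\{0\}$; letting $I_B=\{i:A_i\subseteq B\}$, a similar argument shows that $\tilde A_B = B\oplus \bigoplus_{i\notin I_B} A_i$ has $\G_{fund}$-decomposition $\{B\}\cup\{A_i:i\notin I_B\}$. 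Therefore
\[
\delta_{\tilde A_B} \;=\; \delta - \pi_B - \sum_{i\notin I_B}\pi_{A_i},
\]
and since $v_\T$ lies on $H_B$ and on every $H_{A_i}$ a direct computation gives $(v_\T,\delta_{\tilde A_B}) = a - \epsilon_{\dim B} - \sum_{i\notin I_B}\epsilon_{\dim A_i}$, precisely the defining value of the hyperplane. Finally, by Lemma \ref{piB}, $\delta_{\tilde A_B}$ is invariant under $W_B\times\prod_{i\notin I_B}W_{A_i}$, a group that contains $H$ because $W_{A_i}\subseteq W_B$ for $i\in I_B$; therefore $(hv_\T,\delta_{\tilde A_B})=(v_\T,\delta_{\tilde A_B})$ for every $h\in H$.

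For (ii), the differences $\delta_{\tilde A_B}-\delta_{\tilde A} = \sum_{i\in I_B}\pi_{A_i}-\pi_B$ each contain a unique $-\pi_B$ term, and since $\{\delta\}\cup\{\pi_X:X\in\MS\setminus\{V\}\}$ is linearly independent by Proposition \ref{basenested}, the $|\MS|-k$ normals themselves are independent. For (iii), let $\sigma v_{\T'}$ be a vertex of $P_\G(\Phi)$ lying on every listed hyperplane. Propositions \ref{pianispeciali} and \ref{appartenerealpermutonestoedro} applied to the $\tilde A$-hyperplane force $\{A_1,\dots,A_k\}\subseteq\T'$ and $\sigma\in H$; applied to each $\tilde A_B$-hyperplane they force $B\in\T'$. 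Hence $\MS\subseteq\T'$ and $\sigma\in H$, so $\sigma v_{\T'}$ is already a vertex of $F'$ and the face coincides with $F'$. The main obstacle is the combinatorial claim in (i) identifying $\{B\}\cup\{A_i:i\notin I_B\}$ as the $\G_{fund}$-decomposition of $\tilde A_B$: one must carefully exclude any $\G$-element $C$ with $B\subsetneq C\subseteq\tilde A_B$ or $A_i\subsetneq C\subseteq\tilde A_B$ using minimality of the $A_i$'s in $\MS$ together with the nested-set and building-set axioms, and handle separately the degenerate cases where $\tilde A$ or $\tilde A_B$ happens to lie in $\G_{fund}$, in which case the hyperplane $H$ replaces $\overline H$.
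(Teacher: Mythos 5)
Your three-step plan matches the paper's strategy: both arguments identify the set of vertices of $P_\G(\Phi)$ lying on the listed hyperplanes with the set $\bigcup_{h\in H}h(\mathrm{vert}\,F)$ via Propositions \ref{appartenerealpermutonestoedro} and \ref{pianispeciali}, and both rely on the observation that nestedness makes $\{B\}\cup\{A_i : i\notin I_B\}$ the $\G_{fund}$-decomposition of $B+\tilde A$. Steps (i) and (iii), and the linear-independence computation in (ii), are all sound.

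The gap is in how you extract the codimension from (ii). Independence of the $|\MS|-k$ normals tells you only that the affine flat $\Lambda$ cut out by those hyperplanes has codimension $|\MS|-k$, i.e.\ $\operatorname{codim} F' \geq |\MS|-k$. It does \emph{not} give the reverse inequality: a face of a polytope can have strictly smaller dimension than the affine intersection of the supporting hyperplanes used to cut it out (think of the apex of a square pyramid, which is cut out already by two of its four incident facet hyperplanes, while those two hyperplanes meet in a line). To finish, you must show that the vertices $\{h v_\T : h\in H,\ \T\supseteq\MS\}$ affinely span all of $\Lambda$. The paper does exactly this, arguing as in Proposition \ref{pianispeciali} that the affine span equals $\langle F\rangle + (A_1\oplus\cdots\oplus A_k)$ and then computing its dimension using the orthogonality of the $A_i$ (Lemma \ref{piB}) and the independent conditions $(x,\pi_{A_i})=0$. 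A quick way to close your gap in the same spirit: pick one simple root $\alpha_{j_i}\in A_i\cap\Delta$ for each $i$; then $s_{\alpha_{j_i}}v_\T - v_\T$ is a nonzero multiple of $\alpha_{j_i}$, and pairwise orthogonality of the $A_i$ (Lemma \ref{piB}) together with $(\alpha_{j_i},\pi_{A_i})>0$ shows these $k$ directions are independent modulo $\langle F\rangle - v_\T$, so $\dim F' \geq (n-|\MS|)+k$, which combined with (ii) gives equality.
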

 
 \begin{proof}

 First we observe that for every \(B\in \MS-\{V,A_1, A_2,...,A_k\}\) the sum  \(B+(A_1\oplus A_2\oplus \cdots \oplus A_k)\) is equal, by nestedness,  to \(B\oplus A_{i_1}\oplus\cdots \oplus A_{i_r}\) where \(\{A_{i_1},\ldots,A_{i_r}\}\) is a (possibly empty) subset of \(\{A_1, A_2,...,A_k\}\);  if  
 \(\{A_{i_1},\ldots,A_{i_r}\}\) is not empty then the corresponding space is  \({\overline H}_{B\oplus A_{i_1}\oplus\cdots \oplus A_{i_r}}\),  otherwise it is \(H_B\). In both cases it is one of the defining hyperplanes of the permutonestohedron.
 

Let us  then denote by    \(L\)  the face of the permutonestohedron determined by the intersection of the defining hyperplanes mentioned in the claim.

It is straightforward to check, using Propositions \ref{appartenerealpermutonestoedro} and \ref{pianispeciali},  that the   set of vertices of \(L\) coincides with the set of all the vertices which belong to  \(\cup_{h\in H}hF\).

Therefore \(L=F'\); now, applying an argument analogue to the one of the proof of Proposition \ref{pianispeciali} one checks that  the affine span of the vertices in \(F'\) coincides with the  subspace \(<F>+(A_1\oplus A_2\oplus \cdots \oplus A_k)\) that has codimension  \(|\MS|-k\) (here \(<F>\) denotes the affine span of \(F\)).
 \end{proof}

 The pictures in Figure \ref{facce10} and Figure \ref{facce12} illustrate, in the case of the permutoassociahedron \(P_{\F_{A_3}}(A_3)\), the    correspondence between faces and pairs described above.
  \begin{figure}[h]
 \center
\includegraphics[scale=0.22]{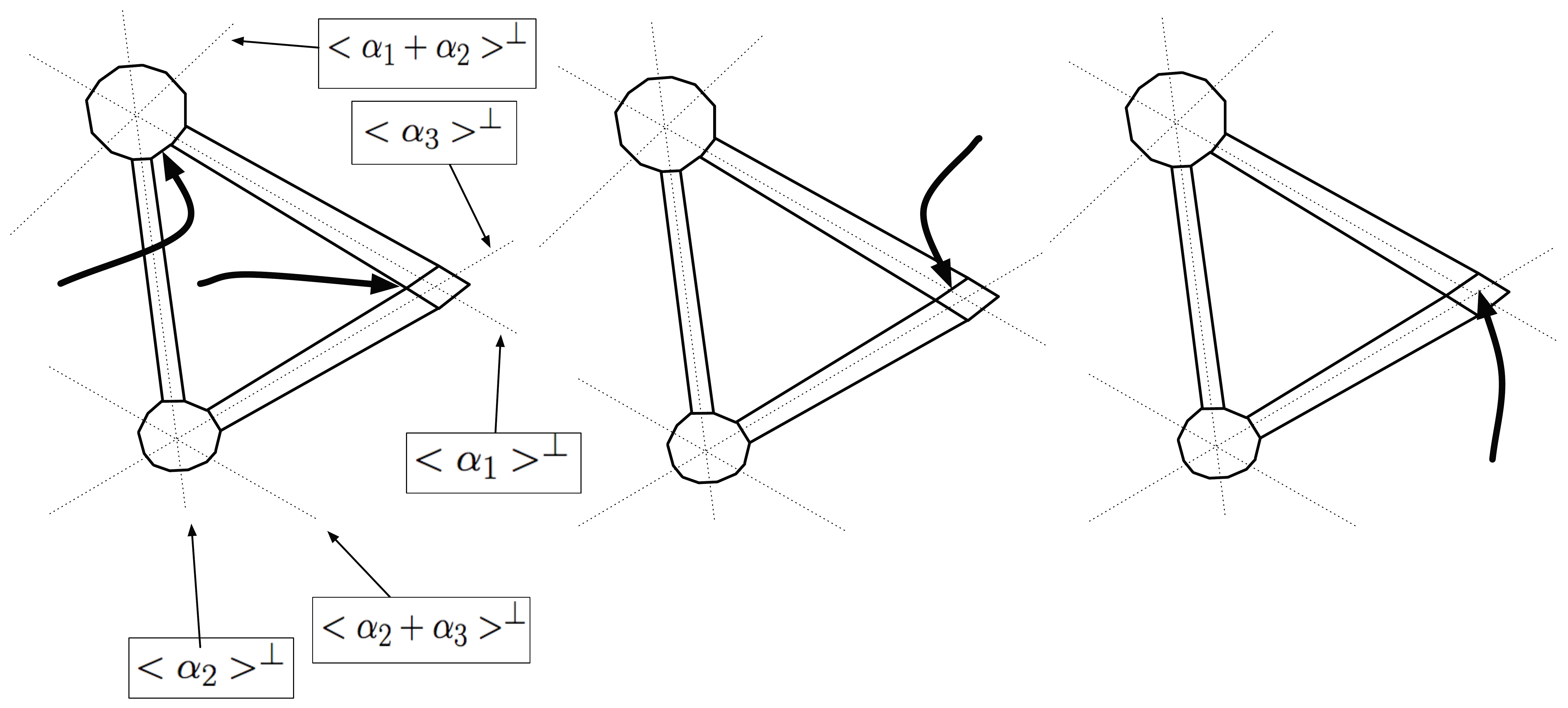}
\caption{Some  planar pictures of the  portion of \(P_{\F_{A_3}}(A_3)\) which is around the fundamental chamber: the dotted lines represent the hyperplanes which intersect the closed fundamental chamber, as indicated in the picture on the left. 
In the picture on the left, the thick  arrows indicate respectively the vertex  \(( \{e\} ,\{V, <\alpha_1>, <\alpha_3>\} )\) and the edge \(( \{e\} ,\{V, <\alpha_1,\alpha_2>\} )\).
In the picture in the middle the thick  arrow indicates the edge \(( W_{<\alpha_1>} ,\{V, \underline{<\alpha_1>}, <\alpha_3>\} )\).
In the picture on the right the thick  arrow indicates the facet \(( W_{<\alpha_1>}\times W_{<\alpha_3>} ,\{V, \underline{<\alpha_1>}, \underline{<\alpha_3>}\} )\).}
\label{facce10}
\end{figure}

 \begin{figure}[h]
 \center
\includegraphics[scale=0.22]{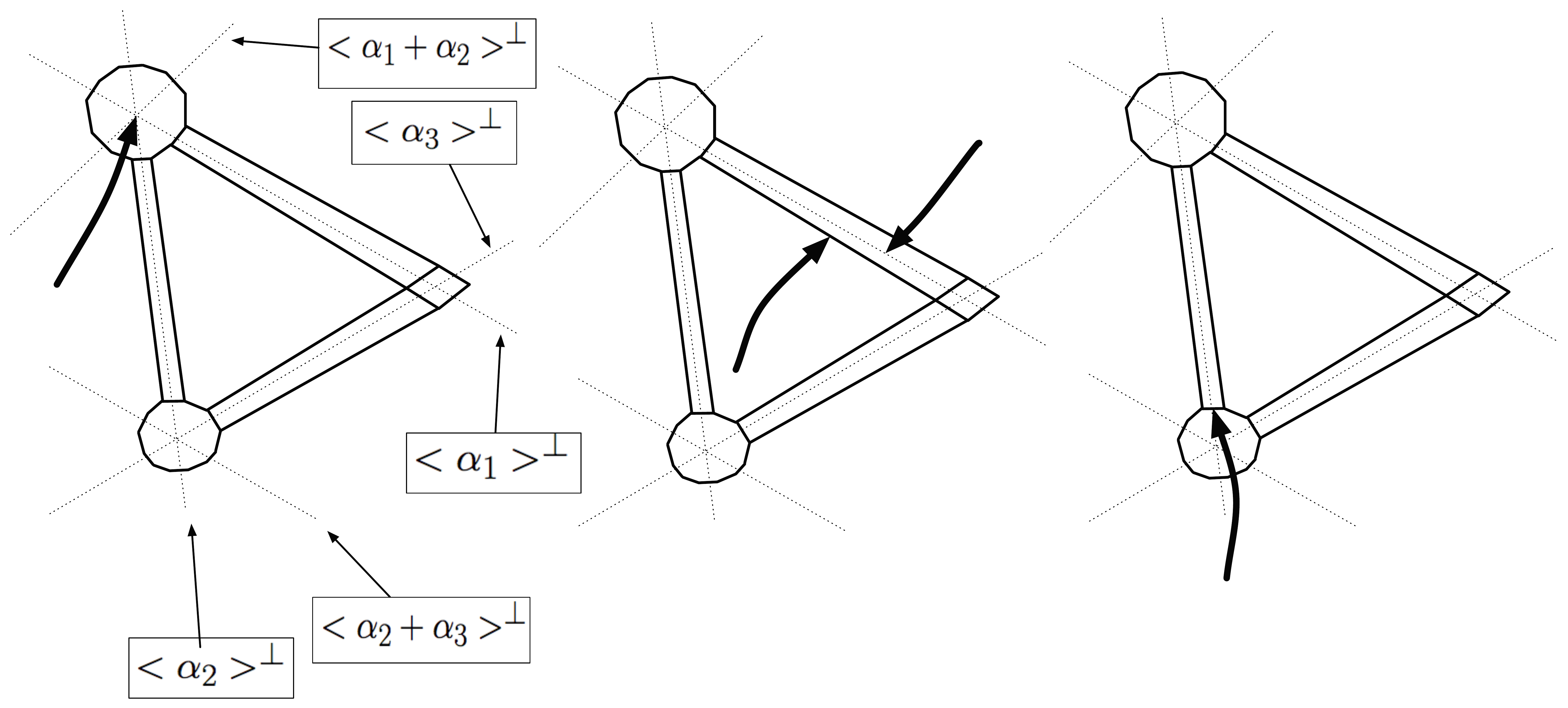}
\caption{Again some  planar pictures of the portion of \(P_{\F_{A_3}}(A_3)\) which is around the fundamental chamber.
In the picture on the left, the thick  arrow indicates the facet  \(( W_{<\alpha_1,\alpha_2>},\{V, \underline{<\alpha_1,\alpha_2>}\} )\).
In the picture in the middle the thick  arrows indicate respectively  the edge \(( \{e\} ,\{V, <\alpha_1>\} )\) and the facet \(( W_{<\alpha_1>} ,\{V, \underline{<\alpha_1>}\} )\).
In the picture on the right the thick  arrow indicates the edge \(( W_{<\alpha_2>},\{V,\underline{ <\alpha_2>}, <\alpha_2, \alpha_3>\} )\).}
\label{facce12}
\end{figure}

From now on we will denote a face by its corresponding pair; as  an immediate  consequence of  Proposition \ref{faccedavvero} we have: 
\begin{cor}
The dimension of the face \((\sigma H, \MS)\) is given by \(n-|\MS|+l\) where \(l\) is the number of nontrivial labels.
\end{cor}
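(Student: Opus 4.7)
The plan is to deduce the corollary directly from Proposition~\ref{faccedavvero} together with the description of the nestohedron in Theorem~\ref{teo:nestoedro}, handling separately the trivial-label case $l=0$ (which is formally outside the hypothesis of Proposition~\ref{faccedavvero}) and the case $l\geq 1$. Since the $W$-action on $P_\G(\Phi)$ is by isometries, it is enough to compute the dimension of the face corresponding to the pair $(H,\MS)$ rather than $(\sigma H,\MS)$.

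When $l\geq 1$, I would simply invoke Proposition~\ref{faccedavvero}: the face in question is the $F'$ considered there, and the proposition asserts that $F'$ arises as the intersection of $|\MS|-l$ explicit defining hyperplanes (one hyperplane $\overline{H}_{A_1\oplus\cdots\oplus A_k}$ or $H_{A_1\oplus\cdots\oplus A_k}$, plus one hyperplane for each $B\in\MS-\{V,A_1,\ldots,A_k\}$) and that its affine span is $\langle F\rangle+(A_1\oplus\cdots\oplus A_k)$, of codimension $|\MS|-l$ in $V$. So $\dim F'=n-|\MS|+l$, as wanted.

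When $l=0$ we have $H=\{e\}$, so the face is simply the face $F$ of the nestohedron $P_{\G_{fund}}$ indexed by the (unlabelled) nested set $\MS$. Since $P_{\G_{fund}}$ lives inside the $(n-1)$-dimensional affine hyperplane $H_V$, Theorem~\ref{teo:nestoedro} identifies its faces of dimension $i$ with the nested sets of cardinality $n-i$ containing $V$; applied to $\MS$ this yields $\dim F=n-|\MS|=n-|\MS|+l$.

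I do not anticipate any real obstacle here: the formula is essentially a bookkeeping consequence of Proposition~\ref{faccedavvero}, and the only mildly subtle point is remembering to treat the boundary case $l=0$ separately by appealing to Theorem~\ref{teo:nestoedro} instead. Both cases produce the unified formula $n-|\MS|+l$, which finishes the argument.
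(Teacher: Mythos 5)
Your proposal is correct and follows the same route as the paper, which simply states the corollary as an immediate consequence of Proposition~\ref{faccedavvero}. Your treatment is in fact slightly more careful than the paper's: you correctly notice that Proposition~\ref{faccedavvero} assumes at least one nontrivial label, so the $l=0$ case needs the separate (but standard) appeal to Theorem~\ref{teo:nestoedro} describing faces of $P_{\G_{fund}}$ inside $H_V$, and you supply it cleanly.
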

\begin{rem}For instance, \((W, \{\underline{V}\})\) is the full permutonestohedron. 
The nestohedra inside the chambers correspond to the pairs   
\[(\sigma \{e\}, \{V\})\]
for every \(\sigma\in W\),  while the  other facets, the facets that  `cross some of the chambers',   correspond to 
\[(\sigma W_{A_1}\times W_{A_2} \times \cdots \times W_{A_k}, \{V, \underline{A_1}, \underline{A_2},\ldots , \underline{A_k}\})\]

Here the nested set on the right is made by \(V\) and by the proper subspaces \(A_1, A_2, \ldots, A_k\) that are all minimal and with non trivial label.
\end{rem}
The following  corollary points out that, once \(\Phi\) is fixed, only the maximal permutonestohedron \(P_{{\mathcal C}_\Phi}(\Phi)\) is a simple polytope.
\begin{cor}
\label{cor:nonsemplice}
A vertex \(v_\MS\) in \(P_{\G_{fund}}\) belongs to exactly \(n\) facets of \(P_\G(\Phi)\) if and only if \(\MS\) has only one minimal element.
Therefore the  polytope  \(P_\G(\Phi)\) is simple if and only if \(\G\) is the maximal building set \({\mathcal C}_\Phi\).
\end{cor}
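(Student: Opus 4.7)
The approach is to enumerate, via the face poset dictionary of Section \ref{sec:poset}, the facets of the \(n\)-dimensional polytope \(P_\G(\Phi)\) that contain a given vertex \(v_\MS\), and to read off when this count equals \(n\). Since the face \((\sigma H,\MS')\) has dimension \(n-|\MS'|+l\), where \(l\) is the number of nontrivial labels in \(\MS'\), a facet corresponds to \(|\MS'|-l=1\). This forces either \(\MS'=\{V\}\) with no labels (the \emph{chamber facets} \((\sigma\{e\},\{V\})\), one per \(\sigma\in W\)), or \(\MS'=\{V,\underline{A_1},\ldots,\underline{A_k}\}\) with \(k\geq 1\) and every minimal element labeled (the \emph{crossing facets}). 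By Proposition \ref{faccedavvero}, the defining hyperplane of a crossing facet is \(\tau H_{A_1}\) when \(k=1\) and \(\tau\overline H_{A_1\oplus\cdots\oplus A_k}\) when \(k\geq 2\).

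Next I would count the facets through \(v_\MS\). Since \(v_\MS\) lies in the open fundamental chamber (Proposition \ref{basenested}) and \(W\) acts freely on the open chambers, exactly one chamber facet (the one with \(\sigma=e\)) contains \(v_\MS\). For a crossing facet, Propositions \ref{appartenerealpermutonestoedro} and \ref{pianispeciali} describe precisely which translates of vertices lie on \(H_A\) or \(\overline H_B\); equating such a translate to \(v_\MS\) and invoking freeness of the \(W\)-action on the open fundamental chamber forces \(\tau\in H=W_{A_1}\times\cdots\times W_{A_k}\), so the coset \(\tau H\) equals \(H\), and \(\{A_1,\ldots,A_k\}\subseteq\MS\). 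Hence crossing facets through \(v_\MS\) correspond bijectively to nonempty antichains in \(\MS\setminus\{V\}\). Because \(|\MS\setminus\{V\}|=n-1\), the number of such antichains is at least \(n-1\) (the singletons), with equality iff \(\MS\setminus\{V\}\) is totally ordered, i.e.\ iff \(\MS\) is a chain. Therefore \(v_\MS\) lies in exactly \(1+(n-1)=n\) facets iff \(\MS\) has a unique minimal element, and in strictly more facets otherwise; this proves the first statement.

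For simplicity, the \(W\)-action by isometries permutes vertices and facets, so \(P_\G(\Phi)\) is simple iff every maximal nested set of \(\G_{fund}\) is a chain. Working in the combinatorial model of Definitions \ref{combinatorialbuilding}--\ref{combinatorialnested}: when \(\G=\mathcal{C}_\Phi\), every nonempty subset of simple-root indices belongs to \(\mathcal{B}\), so condition (b) forbids any antichain of size \(\geq 2\) inside a nested set, and every nested set is a chain. Conversely, if \(\G\subsetneq\mathcal{C}_\Phi\), then \(\mathcal{B}\) is a proper subcollection of all nonempty subsets, so some \(I\subseteq\{1,\ldots,n\}\) is not in \(\mathcal{B}\); its \(\mathcal{B}\)-decomposition \(I=B_1\sqcup\cdots\sqcup B_k\) with \(k\geq 2\) gives a pairwise-incomparable antichain \(\{B_1,\ldots,B_k\}\) whose union \(I\notin\mathcal{B}\), and together with \(V\) this extends to a non-chain maximal nested set.

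The step I expect to require the most care is the bijection between crossing facets through \(v_\MS\) and antichains in \(\MS\setminus\{V\}\): one must confirm that distinct antichains produce distinct facets and that the coset \(\tau H\) collapses uniquely to \(H\). Both points reduce to freeness of the \(W\)-action on the open fundamental chamber, which forces any equality \(v_\MS=\tau g v_{\MS'}\) to imply \(\tau g=e\) and \(\MS'=\MS\).
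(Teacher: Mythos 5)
Your proof is correct and takes essentially the same approach as the paper: both enumerate the facets through $v_\MS$ via Proposition \ref{faccedavvero} as antichains of $\MS\setminus\{V\}$ (plus the chamber facet $(\{e\},\{V\})$), yielding exactly $n$ precisely when $\MS$ is linearly ordered, and both deduce the second statement by producing a non-linearly-ordered maximal nested set in $\G_{fund}$ whenever $\G\subsetneq\mathcal{C}_\Phi$.
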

\begin{proof}
First we observe that a   nested set \(\MS\) in \(\G_{fund}\)  has only one minimal element if and only if it is linearly ordered.

 Then we notice  that Proposition \ref{faccedavvero} can be used to determine all the facets  that contain \(v_\MS\). These are  the facets  
\[(\sigma W_{A_1}\times W_{A_2} \times \cdots \times W_{A_k}, \{V, \underline{A_1}, \underline{A_2},\ldots , \underline{A_k}\})\]
where \(\{V, A_1, A_2,\ldots , A_k\}\) is a nested subset of \(\MS\) and the \(A_i\) are minimal  (we are including the case \(k=0\), i.e., the face \((\{e\}, \{V\})\)).
If \(\MS\) is not linearly ordered these facets  are more than \(n\), while if \(\MS\) is linearly ordered they are exactly \(n\).

To finish the proof we remark that if  \(\G\) is not the maximal building set \({\mathcal C}_\Phi\) there exist non linearly ordered \(\G\)-nested sets: to obtain a non linearly ordered nested set made by two elements    it is sufficient to take two subspaces \(A,B \in \G\) whose sum is direct and doesn't belong to \(\G\). 
As an immediate  consequence,  if  \(\G\) is not the maximal building set associated to \(\Phi\) there exists a maximal nested set \(\MS\) in \(\G_{fund}\) that is not linearly ordered.

\end{proof}

We now focus on   the facets that   cross the chambers: they are   combinatorially equivalent to a product of a nestohedron with some permutonestohedra. More precisely, 
let us consider the facet \[(\sigma W_{A_1}\times W_{A_2} \times \cdots \times W_{A_k}, \{V, \underline{A_1}, \underline{A_2},\ldots , \underline{A_k}\})\] and  denote by \(\G^{A_i}_{}\) the  subset of \(\G_{}\) given by the subspaces which are included in \(A_i\).  This is a building set associated with  the root system \(\Phi\cap A_i\). Furthermore, let us denote \(A_1\oplus A_2\oplus \cdots \oplus A_k\) by \(D\) and consider the building set \({\overline \G}\) in \(V/D\) given by 
\[{\overline \G}=\{(C+D)/D \: | \: C\in \G_{fund}\}\]
According to the notation  in Section \ref{sec:ricordare} we call \(P_{\overline \G}\) the nestohedron associated with \({\overline \G}\).
\begin{teo}
\label{combinatorialiso}
The facet \((\sigma W_{A_1}\times W_{A_2} \times \cdots \times W_{A_k}, \{V, \underline{A_1}, \underline{A_2},\ldots , \underline{A_k}\})\) of \(P_\G(\Phi)\) is combinatorially equivalent to the product\footnote{Here we are considering the well defined   product of combinatorial polytopes.} 
\[  P_{\overline \G}\times P_{\G^{A_1}_{}}(\Phi\cap A_1) \times \cdots \times P_{\G^{A_k}_{}}(\Phi\cap A_k)\]
\end{teo}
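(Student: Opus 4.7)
The plan is to identify the facet concretely using Proposition \ref{faccedavvero} and then recognize the resulting polytope as a product by exploiting the orthogonality built into the $\G_{fund}$-decomposition. Since translation by $\sigma \in W$ is an isometry preserving combinatorial type, we may assume $\sigma = e$; write $H = W_{A_1} \times W_{A_2} \times \cdots \times W_{A_k}$, and denote our facet by $L$. By Proposition \ref{faccedavvero}, $L$ is the convex hull of the $H$-orbit of the single face $F$ of the fundamental nestohedron $P_{\G_{fund}}$ corresponding to the nested set $\T = \{V, A_1, \ldots, A_k\}$. Setting $D = A_1 \oplus \cdots \oplus A_k$, Lemma \ref{piB} guarantees that the summands $A_i$ are pairwise orthogonal, so each $W_{A_i}$ acts as reflections within $A_i$ fixing $A_i^\perp$ pointwise, and distinct $W_{A_i}$'s commute.

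The first step is the product decomposition of $F$. The vertices of $F$ are exactly the $v_\MS$ for maximal nested sets $\MS \supseteq \T$. Each such $\MS$ decomposes canonically as $\MS = \MS_{\overline\G} \sqcup \MS^{A_1} \sqcup \cdots \sqcup \MS^{A_k}$, where $\MS^{A_i}$ collects the elements of $\MS$ contained in $A_i$ (a maximal $\G^{A_i}_{fund}$-nested set in $A_i$) and $\MS_{\overline\G}$ collects the remaining elements, projecting modulo $D$ to a maximal $\overline\G$-nested set. This yields a bijection between vertices of $F$ and tuples of vertices of $P_{\overline\G} \times P_{\G^{A_1}_{fund}} \times \cdots \times P_{\G^{A_k}_{fund}}$. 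Moreover, the defining hyperplanes of $P_\G(\Phi)$ that cut further faces of $F$ (the $H_B$ and $\overline H_B$ for $B \in \G_{fund}$, $B \notin \T$, meeting $F$ in a proper face) partition according to whether $B \subseteq A_i$ for some $i$ or $B$ projects nontrivially to $V/D$; this gives the full combinatorial product structure of $F$ as the product of these nestohedra.

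The second step assembles the global picture. Since $H$ acts trivially on $V/D$ and acts on each $A_i$ through its factor $W_{A_i}$, the $H$-action preserves the product decomposition of $F$, trivially on the $P_{\overline\G}$ factor and separately on each $P_{\G^{A_i}_{fund}}$ factor. Hence taking the convex hull of $H \cdot F$ commutes with the product, yielding (combinatorially)
\[
L \;\cong\; P_{\overline\G} \,\times\, \operatorname{conv}(W_{A_1} \cdot P_{\G^{A_1}_{fund}}) \,\times\, \cdots \,\times\, \operatorname{conv}(W_{A_k} \cdot P_{\G^{A_k}_{fund}}).
\]
By Theorem \ref{teo:verticipermutoedro} applied to each root system $\Phi \cap A_i$ (with Weyl group $W_{A_i}$ and building set $\G^{A_i}$, using the suitable list inherited from the one for $\Phi$, whose validity is forced by the fact that the defining hyperplanes of $L$ restrict consistently to each $A_i$-factor), each factor $\operatorname{conv}(W_{A_i} \cdot P_{\G^{A_i}_{fund}})$ is exactly the permutonestohedron $P_{\G^{A_i}}(\Phi \cap A_i)$, and the theorem follows.

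The main technical obstacle is step one: verifying rigorously that the face $F$ of $P_{\G_{fund}}$ indexed by $\T$ decomposes combinatorially as the claimed product of smaller nestohedra, and that the decomposition is compatible with the pair-labelling of Section \ref{sec:poset}. This requires a careful bookkeeping of how the defining hyperplanes of $P_\G(\Phi)$ that meet $L$ split between the link factor $P_{\overline\G}$ (coming from subspaces projecting nontrivially modulo $D$) and the restriction factors $P_{\G^{A_i}_{fund}}$ (coming from subspaces contained in some $A_i$), matching the pair-labelled face-poset data of $L$ against the face poset of the product on the nose.
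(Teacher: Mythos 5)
Your decomposition data—nested sets split into the part projecting to $V/D$ and the parts inside each $A_i$, together with the factorwise decomposition of the group data—is exactly the bijection the paper writes down, so the core idea matches. However, the geometric wrapper in your step two is not actually licensed: the assertion that "taking the convex hull of $H\cdot F$ commutes with the product" would require $F$ to be a \emph{geometric} product of polytopes sitting in a product of affine subspaces, with $H$ acting only on the second factor. What you have established in step one is only that $F$ is a \emph{combinatorial} product, and that is not enough: in general, if $P$ is merely combinatorially $P_1\times P_2$ and a group acts on its geometric realization, the convex hull of the orbit need not factor. The paper avoids this issue entirely by never passing through the geometric convex-hull statement; it works directly with the pair-labelled face poset of Section \ref{sec:poset}, defines the map
$(\sigma H, \MS) \mapsto \bigl(\overline{\MS};\ (\sigma_i W_{A_{i1}}\times\cdots\times W_{A_{is_i}}, \MS^{A_i})_{i=1,\ldots,k}\bigr)$,
and checks (via Proposition \ref{faccedavvero}) that it is a bijection compatible with the order. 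You flag at the end that "careful bookkeeping" of how hyperplanes split is the missing step; that bookkeeping is in fact the whole proof, and once you do it at the poset level the geometric "hence" becomes unnecessary. So: same underlying combinatorics, but your geometric shortcut is a gap, not a simplification, and the rigorous route is the poset-level bijection you identified as an obstacle.
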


\begin{proof}

Let us show how to associate to a face   \((\sigma H, \MS)\) of  \((\sigma W_{A_1}\times W_{A_2} \times \cdots \times W_{A_k}, \{V, \underline{A_1}, \underline{A_2},\ldots , \underline{A_k}\})\) a face in the product  
\[  P_{\overline \G}\times P_{\G^{A_1}_{}}(\Phi\cap A_1) \times \cdots \times P_{\G^{A_k}_{}}(\Phi\cap A_k)\]
We remark that 
\begin{itemize}
\item[a)]  \(\MS \subset  \G_{fund}\) is a  labelled nested set that contains \(V,A_1, A_2, \ldots, A_k\); 
\item[b)] \(\sigma=\sigma_1\sigma_2\cdots \sigma_k\) with \(\sigma_i\in W_{A_i}\); 
\item[c)]  \(H\)  is a subgroup of \(W_{A_1}\times W_{A_2} \times \cdots \times W_{A_k}\) that  can be expressed as a product of \(W_{A_{ij}}\) for some minimal subspaces \(A_{ij}\in \MS\) (\(i=1,...,k\) and, for every \(i\), \(A_{ij}\subset A_i\) and  the index   \( j\) ranges from  0 to a natural number \(s_i\), with the convention that \(W_{A_{i0}}\) is the trivial group).

\end{itemize}
Then we  associate to \((\sigma H, \MS)\):
\begin{itemize}
\item the face of \(P_{\overline \G}\) which corresponds to the nested set 
\({\overline \MS}=\{K+D/D\: | \: K\in \MS\}\) of    \({\overline \G}\); 
\item for every \(i=1,...,k\), the face 
\[ (\sigma_i W_{A_{i1}}\times W_{A_{i2}} \times \cdots \times W_{A_{is_i}}, \MS^{A_i} )\]
of  \(P_{\G^{A_i}_{}}(\Phi\cap A_i)\), where \(\MS^{A_i}\) is the  subset of \(\MS\) given by the subspaces which are included into \(A_i\) and the labelled subspaces of \(\MS\) keep their label in \(\MS^{A_i}\).
\end{itemize} 
The above described map is easily shown to be bijective, and, using Proposition \ref{faccedavvero}, a poset isomorphism.

\end{proof} 
 As a corollary  of Proposition \ref{faccedavvero} and Theorem  \ref{combinatorialiso}, we conclude this section with an explicit description of the  order relation on the face poset  of \(P_\G(\Phi)\).
\begin{cor}   Given two faces \((\sigma' H', \MS')\) and  \((\sigma H, \MS)\) in the face poset of \(P_\G(\Phi)\)  we   have 
\[(\sigma' H', \MS')< (\sigma H, \MS)\]
if and only if  \(\sigma' H'\subseteq \sigma H\) and \(\MS'\) is obtained from \(\MS\) by a composition of some of the following moves:
\begin{itemize}
\item adding a  subspace which is not minimal, i.e. it contains some of the subspaces in \(\MS\);
\item adding a   subspace \(A\), minimal in \(\MS'\), with trivial label and  with the property that   \(A\)   is not included in any of the minimal subspaces of \(\MS\), or it is included in a minimal subspace \(B\) of \(\MS\) which is  labelled by \(\{e\}\); in the latter case   \(B\)  loses its label; 
\item adding some subspaces \(A_1,..,A_k\)  that are  minimal in \(\MS'\), all  with nontrivial label, and all included in  a minimal subspace \(B\) of \(\MS\) which was   labelled by \(W_B\) and   loses its label;
\item changing the non trivial label of a minimal subspace into the trivial label.
\end{itemize}
\end{cor}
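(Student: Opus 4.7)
The plan is to deduce this corollary from Proposition \ref{faccedavvero}, which identifies each face as an explicit intersection of defining hyperplanes, together with Theorem \ref{combinatorialiso}, which describes the combinatorial structure of facets that cross chambers. Since the partial order is the transitive closure of the covering relations, and since the four listed moves are closed under composition, it will suffice to show that covering relations in the face poset are in bijection with single applications of one of the four moves.

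The first step is to translate the inclusion $(\sigma' H', \MS') \leq (\sigma H, \MS)$ into combinatorial data. By Theorem \ref{teo:verticipermutoedro} together with the construction in Proposition \ref{faccedavvero}, the vertex set of the face $(\sigma H, \MS)$ is precisely $\{\sigma h v_\T : h \in H,\ \T \text{ a maximal nested set containing } \MS\}$. Any sub-face therefore must have $\sigma' H' \subseteq \sigma H$ and $\MS' \supseteq \MS$, which immediately yields the coset condition in the statement and reduces the question to an analysis of how $\MS'$ and $H'$ can refine $\MS$ and $H$.

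I would then verify the four moves case-by-case. Move 1 (adding a non-minimal subspace $B$) adjoins exactly the single defining hyperplane $H_{B+D}$ or $\overline{H}_{B+D}$, where $D = A_1 \oplus \cdots \oplus A_k$ is the sum of the labeled minimals, lowering the dimension by one while leaving $H$ unchanged. Move 2 (adding a minimal subspace with trivial label, either disjoint from $D$ or lying inside an unlabeled minimal element of $\MS$) adjoins one such hyperplane as well. Move 4 (replacing a non-trivial label by the trivial one) shrinks $H$ to a subgroup and refines the coset $\sigma H$ to a coset of the smaller group, producing a face of the same dimension inside a sub-orbit. Move 3 is the subtlest: here the label on $B$ is exchanged for labels on inner minimals $A_1, \ldots, A_k$; I would invoke Theorem \ref{combinatorialiso} to realize the ambient facet as a product containing the factor $P_{\G^B}(\Phi \cap B)$ of strictly smaller dimension, and recognize the move as performing a single covering step inside that factor, which proceeds by induction on $\dim V$.

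The main obstacle will be proving exhaustiveness of the four moves at the level of covering relations, i.e.\ that every dimension-one-smaller sub-face of $(\sigma H, \MS)$ arises from exactly one of them. I would handle this by inspecting the possible ways in which a single additional defining hyperplane from Definition \ref{defpermuto} can meet the relative interior of the face, using Propositions \ref{appartenerealpermutonestoedro} and \ref{pianispeciali} to exclude the hyperplanes that do not actually cut the face further and to identify those that do with the one of the four combinatorial operations listed.
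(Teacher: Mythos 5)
Your overall strategy — reduce to covering relations, verify that each listed move is a covering relation, and then argue exhaustiveness by inspecting which additional defining hyperplanes can be adjoined — is a reasonable way to fill in the paper's terse derivation, and it does rest on Proposition \ref{faccedavvero} (and, for the product structure, Theorem \ref{combinatorialiso}), which are exactly the two ingredients the paper cites. The vertex-set identification $\{\sigma h v_{\T} : h \in H,\ \T \supseteq \MS\}$ is correct and gives the coset condition and $\MS \subseteq \MS'$ cleanly. However, there are a couple of issues.

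First, a genuine error in the Move 4 analysis: you assert that trivializing a non-trivial label produces ``a face of the same dimension inside a sub-orbit.'' By the dimension formula $n - |\MS| + l$, trivializing one label keeps $|\MS|$ fixed and decreases $l$ by one, so the dimension drops by exactly one; this is precisely why Move 4 is a covering relation. As stated, ``same dimension'' contradicts the rest of your argument, where you treat every move as a cover, and it also contradicts the simplest check: passing from $(W, \{\underline{V}\})$ to $(\{e\}, \{V\})$ goes from the full $n$-dimensional polytope to a facet.

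Second, the Move 3 argument detours through Theorem \ref{combinatorialiso} and an induction on $\dim V$. This is not wrong, but it is more circuitous than needed and slightly delicate: Theorem \ref{combinatorialiso} is formulated for facets that cross chambers, so to apply it you must first locate a facet containing both faces and then pass to the appropriate factor $P_{\G^{B}}(\Phi \cap B)$. It is simpler to check directly from Proposition \ref{faccedavvero} that Move 3 replaces the set of defining hyperplanes in the obvious way (dropping $\overline{H}_{B \oplus \cdots}$ and adjoining the hyperplanes indexed by $A_1 \oplus \cdots \oplus A_k \oplus \cdots$ and its sums with the other elements of $\MS$) and lowers dimension by one, just as you did for Moves 1 and 2.

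Third, the exhaustiveness step — the heart of the ``only if'' direction — is stated as a plan rather than an argument. Propositions \ref{appartenerealpermutonestoedro} and \ref{pianispeciali} do supply the needed information about which hyperplanes touch which vertices, so the route is viable, but as written this is a gap: you need to show that any codimension-one sub-face of $(\sigma H, \MS)$ is determined either by adjoining exactly one further defining hyperplane (which, depending on whether it is of type $H_A$ or $\overline{H}_B$ and how it interacts with the labelled minimals, is one of Moves 1, 2, or 3) or by restricting the coset while keeping the same hyperplanes (Move 4), and no other configurations occur.
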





\section{The  automorphism group}
\label{sec:autgroup}
In this section we study the automorphism group  \(Aut(P_\G(\Phi))\), i.e.  the  group of the isometries of \(V\) that  send \(P_\G(\Phi)\) onto itself.
We adopt here the following normalization of the root system \(\Phi\): if \(\Phi\) is made by two or more  irreducible components, we impose that all the short roots have the same length\footnote{If all the roots of an irreducible component have the same length we consider them short roots.}.

Let \(Aut (\Phi)\) be the group of the automorphisms of \(V\) that  leave \(\Phi\) invariant. With the above normalization its elements are isometries.

\begin{teo}
\label{automorfismi}
Let us suppose that \(\G\) is \(Aut (\Phi)\) invariant.
Then \(Aut (\Phi)\) is a subgroup of  \(Aut(P_\G(\Phi))\).  If  \(Aut(P_\G(\Phi))\) leaves invariant the set of the nestohedra \(\{wP_{\G_{fund}}\: | \: w\in W\}\) then \(Aut (\Phi)=Aut(P_\G(\Phi))\).
\end{teo}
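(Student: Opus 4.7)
The plan splits along the two inclusions stated.

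For the inclusion $Aut(\Phi)\subseteq Aut(P_\G(\Phi))$, I would decompose $Aut(\Phi)=W\cdot\Gamma$, where $\Gamma$ is the stabilizer of the fundamental chamber $Ch(\Delta)$. Since $W\subseteq Aut(P_\G(\Phi))$ by the very definition of $P_\G(\Phi)$ (Definition \ref{defpermuto}), it suffices to show that every $\gamma\in\Gamma$ permutes the defining half-spaces. Such a $\gamma$ permutes the simple roots, hence the simple weights, and therefore fixes $\delta=\sum\omega_i$. Because $\G$ is $Aut(\Phi)$-invariant and $\gamma$ permutes subspaces spanned by simple roots, $\gamma$ induces a dimension-preserving bijection of $\G_{fund}$ and, by the $\G_{fund}$-decomposition, of $\mathcal{C}_{\Phi_{fund}}\setminus\G_{fund}$. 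The identities $\gamma(\pi_A)=\pi_{\gamma(A)}$ and hence $\gamma(\delta_A)=\delta_{\gamma(A)}$ (and likewise for $\delta_B$) combined with the dimension preservation imply $\gamma(H_A)=H_{\gamma(A)}$ and $\gamma(\overline H_B)=\overline H_{\gamma(B)}$. The half-spaces match because $\gamma$ fixes the origin. So $\gamma$ permutes the whole family of defining half-spaces and hence preserves their intersection $P_\G(\Phi)$.

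For the converse inclusion under the extra hypothesis, I would first observe that $W$-invariance places the centroid of $P_\G(\Phi)$ at the origin, so any $\phi\in Aut(P_\G(\Phi))$ is linear. By hypothesis, $\phi$ permutes $\{wP_{\G_{fund}}\mid w\in W\}$; composing with a suitable element of $W$, I may assume $\phi(P_{\G_{fund}})=P_{\G_{fund}}$. Since $P_{\G_{fund}}$ spans the affine hyperplane $H_V$, we get $\phi(H_V)=H_V$, and since $\phi$ is a linear isometry preserving this affine hyperplane it must fix its closest point to the origin, namely $p_0=(a/|\delta|^2)\delta$; hence $\phi(\delta)=\delta$. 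The goal becomes: show that $\phi$ preserves $Ch(\Delta)$, since then $\phi$ permutes the chamber walls $\alpha_i^\perp$, the identity $\delta=\sum\omega_i$ together with $\phi(\delta)=\delta$ forces $\phi(\omega_i)=\omega_{\sigma(i)}$ for some permutation $\sigma$, and hence $\phi(\alpha_i)=\alpha_{\sigma(i)}$, placing $\phi$ in $Aut(\Phi)$.

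The main obstacle is this chamber-preservation step. My strategy is to identify the $n$ \emph{outer} facets $F_{\langle\alpha_i\rangle}$ of $P_{\G_{fund}}$ (the ones coming from one-dimensional elements of $\G_{fund}$) among the facets $\{F_A\mid A\in\G_{fund}\setminus\{V\}\}$ that $\phi$ permutes. Two mutually reinforcing characterizations are available: a combinatorial one, based on Theorem \ref{combinatorialiso}, because for $A=\langle\alpha_i\rangle$ the factor $P_{\G^{A}_{fund}}$ is a point, so $F_{\langle\alpha_i\rangle}$ is combinatorially an indecomposable nestohedron (equivalently, the unique minimal elements of the nested set poset); and a geometric one, namely that $F_{\langle\alpha_i\rangle}$ lies on the hyperplane $(x,\alpha_i)=2\epsilon_1$ in $H_V$, parallel to the chamber wall $\alpha_i^\perp$ at the common in-radius determined by $\epsilon_1$ (all other truncating hyperplanes are farther, by the suitable-list inequalities of Lemma \ref{lemmaepsilon}). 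Both invariants are preserved by $\phi$, so $\phi$ permutes the set $\{F_{\langle\alpha_i\rangle}\}$, and consequently permutes their linear spans $\langle\alpha_i\rangle$ and the parallel walls $\alpha_i^\perp$. Since $\delta$ lies in the open chamber and is fixed, $\phi$ must send $Ch(\Delta)$ onto itself, completing the argument.
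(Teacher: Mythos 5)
The first half of your argument (that $Aut(\Phi)\subseteq Aut(P_\G(\Phi))$) is essentially the paper's: decompose an element of $Aut(\Phi)$ as $w\gamma$ with $\gamma$ a diagram automorphism, note $\gamma(\delta)=\delta$ and $\gamma(\pi_A)=\pi_{\gamma(A)}$, and conclude that the family of defining half-spaces is permuted. That is fine. The reduction in the second half to $\phi(P_{\G_{fund}})=P_{\G_{fund}}$ and the deduction $\phi(\delta)=\delta$ are also fine.

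The gap lies in the chamber-preservation step, and both of your proposed characterizations of the facets $F_{\langle\alpha_i\rangle}$ fail. The combinatorial one, ``indecomposable nestohedron,'' does not single out those facets: already for $\Phi=A_4$ and $\G=\F_{A_4}$, $P_{\G_{fund}}$ is the $3$-dimensional associahedron, and the facet $F_{\langle\alpha_1,\alpha_2,\alpha_3\rangle}$ is a pentagon exactly like $F_{\langle\alpha_i\rangle}$, hence also indecomposable. (Being a minimal element of $\G_{fund}$ is not a property of the facet's combinatorial type; you cannot read it off from the face lattice of $P_{\G_{fund}}$ alone.) The geometric one also breaks down: the distance from the wall $\alpha_i^\perp$ (or from $\delta$) to the hyperplane $(x,\alpha_i)=2\epsilon_1$ is $2\epsilon_1/|\alpha_i|$ (resp.\ involves $|\alpha_i|$), so for non-simply-laced $\Phi$ there is no ``common in-radius'' shared by all outer facets. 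A further unaddressed point is that even granting $\phi$ permutes the affine $(n-2)$-planes $H_{\langle\alpha_i\rangle}\cap H_V$, passing to $\phi(\alpha_i)=\alpha_{\sigma(i)}$ (rather than a positive multiple) requires a length argument that you do not supply.

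What the paper does is cleaner and actually uses the hypothesis throughout: it looks at the facet $\overline F$ of $P_\G(\Phi)$ (not of $P_{\G_{fund}}$) labelled $(W_A,\{V,\underline A\})$ that contains $F_A$, and observes that the number of $(n-2)$-dimensional faces of $\overline F$ lying on one of the nestohedra $\{wP_{\G_{fund}}\}$ equals $|W_A|$. Because $\phi$ permutes that family of nestohedra by hypothesis, this count is an invariant, and $|W_A|=2$ exactly when $A=\langle\alpha\rangle$. This both identifies the outer facets robustly and, since the affine span of $\overline F$ is the full defining hyperplane $H_{\langle\alpha\rangle}$, gives $\phi(H_{\langle\alpha\rangle})=H_{\langle\beta\rangle}$ directly, after which $\phi(\delta_{\langle\alpha\rangle})=\delta_{\langle\beta\rangle}$ and $\phi(\alpha)=\beta$ follow from $\phi$ being an isometry fixing $\delta$. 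You should replace your step 2 with this argument, or with some other invariant that genuinely uses the hypothesis that $\phi$ permutes the $n!$ (or $|W|$) nestohedra.
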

\begin{rem}
Until now in this paper we have used the notations \(wv_\MS, wH_A...\), without parentheses,  to indicate the action of an element \(w\) of the Weyl group. We feel that in the following proof this could  be confusing, since the product of elements in the automorphism group comes into play, so we decided, for the sake of clarity, to put parentheses here.
\end{rem}
\begin{proof}
To prove  the first part of the claim we  will show  that an element \(\varphi \in Aut (\Phi)\) permutes the defining hyperplanes of the permutonestohedron. 

Since \(Aut (\Phi)\) is the semidirect product of the Weyl group with the  automorphism group \(\Gamma\) of the Dynkin diagram of \(\Phi\), we can write  \(\varphi=w\gamma\) where \(w\in W\) and \(\gamma\in \Gamma\). 

It is therefore sufficient  to show that \(\varphi\) sends the hyperplanes \(H_V,H_A,{\overline H}_B\), for any \(A \in \G_{fund}\), \(B\in {{\mathcal C}_\Phi}_{fund} -\G_{fund}\),  to other defining hyperplanes of \(P_\G(\Phi)\).

Now \(\gamma(\delta)=\delta\) since \(\gamma\) leaves invariant  the set of the positive roots. Then \(\varphi(\delta)=w(\delta)\), and therefore \(\varphi(H_V)=w(H_V)\)  since the vectors in \(\varphi(H_V)\) satisfy the defining equation  of \(w(H_V)\) which is \((x,w\delta)=a\).

Then we  show that for any \(A\in \G_{fund}\) we have \(\varphi(H_A)=w(H_{\gamma(A)})\): since  \(\delta_A=\delta-\pi_A\) we have  \(\gamma(\delta_A)=\delta-\gamma(\pi_A)\), but \(\gamma(\pi_A)\) is the semisum of all the positive roots contained in \(\gamma(A)\), thus it is equal to \(\pi_{\gamma(A)}\), therefore \(\gamma(\delta_A)=\delta_{\gamma(A)}\). It is now immediate to prove   that \(\varphi(H_A)\) satisfies the defining equation of \(w(H_{\gamma(A)})\).

The same reasoning applies to prove that, for \(B\in {{\mathcal C}_\Phi}_{fund} -\G_{fund}\) we have  \(\varphi({\overline H}_B)=w({\overline H}_{\gamma (B)})\).

For the second part of the claim, let us suppose that  \(\theta \in Aut(P_\G(\Phi))\) sends  \(P_{\G_{fund}}\) to \(w(P_{\G_{fund}})\).
 It is sufficient  to show that \(w^{-1}\theta\) belongs to \(Aut (\Phi)\). 
 First we observe that since \(w^{-1}\theta\) sends \(H_V\) onto itself and is an isometry we have \(w^{-1}\theta(\delta)=\delta\).
 Now, let us consider a simple root \(\alpha \in \Delta\) and let \(\MS\) be a nested set which contains \(<\alpha>\), so that \(v_\MS\in H_{<\alpha>}\). 
 
 We observe that \(w^{-1}\theta (v_\MS)\) belongs to \(P_{\G_{fund}}\); it also  belongs  to 
\(H_{<\beta>}\)  where \(\beta \) is a simple root. 
In fact if we denote by \(F\) the facet of \(P_{\G_{fund}}\) determined by a hyperplane  \(H_{A}\) with \(A\in \G_{fund}\) (the corresponding pair in the poset is  \((\{e\}, \{V, A\})\)), and by \({\overline F}\) the facet of the permutonestohedron different from \(P_{\G_{fund}}\) which contains \(F\)  (the corresponding pair  in the poset is  \((W_{A}, \{V, \underline{A}\})\)), we notice that in \({\overline F}\)  there are exactly \(|W_A|\) (\(n-2\))-dimensional faces which belong to one of the nestohedra \(\{w(P_{\G_{fund}})\: | \: w\in W\}\).  As a consequence, under our hypothesis, also the   the facet \(w^{-1}\theta ({\overline F})\) has \(|W_A|\) (\(n-2\))-dimensional faces which belong to one of the nestohedra \(\{w(P_{\G_{fund}})\: | \: w\in W\}\).

Moreover we observe that   \(A\) is the span of a simple root  if and only if  \(|W_A|=2\). 

Therefore in our case we have  \(A=<\alpha>\) and  \(w^{-1}\theta ({\overline F})\) has two   (\(n-2\))-dimensional faces which belong to one of the nestohedra \(\{w(P_{\G_{fund}})\: | \: w\in W\}\).  This means that  \(w^{-1}\theta (F)\) is    a facet  of \(P_{\G_{fund}}\) determined by a hyperplane \(H_{B}\)   with \(B\) equal to the span \(<\beta>\) of  a simple root \(\beta\) (possibly equal to \(\alpha\)).

Since \(w^{-1}\theta\) sends \(H_{<\alpha>}\) onto \(H_{<\beta>}\) and is an isometry we have that 
\(w^{-1}\theta(\delta_{<\alpha>})=\delta_{<\beta>}\). From this, since \(w^{-1}\theta(\delta)=\delta\) and \(\delta_{<\alpha>}=\delta-\frac{1}{2}\alpha\), \(\delta_{<\beta>}=\delta-\frac{1}{2}\beta\), it follows \(w^{-1}\theta(\alpha)=\beta\).

This means that \(w^{-1}\theta\) sends \(\Delta\) to itself. 

Now,  considering a two dimensional subspace  \(D\in \G_{fund}\)  and comparing the cardinality of \(W_D\) and \(W_{w^{-1}\theta (D)}\)   we prove that  two simple roots \(\alpha\) and \(\beta\) are orthogonal if and only if their images  \(w^{-1}\theta (\alpha) \) and \(w^{-1}\theta (\beta ) \) are orthogonal.
Moreover  \(w^{-1}\theta\) preserves  root lengths, since sends \(\Delta\) to itself and is an isometry.  
These properties imply that \(w^{-1}\theta\) is an automorphism of the Dynkin diagram.
\end{proof}
\begin{rem}
If \(\Phi=A_n\), any \(W\)-invariant building set is also \(Aut(\Phi)\) invariant and satisfies the hypothesis of the  theorem above. 
If a building set is not \(Aut(\Phi)\) invariant, let \(G\) be the maximal subgroup of \(Aut(\Phi)\) which leaves \(\G\) invariant. Then the claim of the theorem (and its proof) is still valid  with \(G\) in place of \(Aut(\Phi)\). 
\end{rem}
As we will see in the next section, we can choose a suitable list \(\epsilon_1<\epsilon_2\) such that  \(Aut( P_{\F_{A_2}}(A_2; \epsilon_1,\epsilon_2))\)  is greater than \(Aut \ A_2\). 
Anyway, we can state the following theorem.
\begin{teo}
Let \(\G\) be \(Aut (\Phi)\) invariant.
There are infinite suitable lists \(\epsilon_1< \cdots < \epsilon_n=a\) such that 
\(Aut (\Phi)=Aut(P_\G(\Phi))\).
More precisely, once \(a\) is fixed, for all the possible suitable lists whose greatest number is \(a\), except  at most  for a finite number, we have \(Aut (\Phi)=Aut(P_\G(\Phi))\).
\end{teo}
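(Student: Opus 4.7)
The plan is to reduce to Theorem \ref{automorfismi}. Since \(\G\) is \(Aut(\Phi)\)-invariant, that theorem yields the inclusion \(Aut(\Phi)\subseteq Aut(P_\G(\Phi))\) and upgrades it to equality as soon as \(Aut(P_\G(\Phi))\) permutes the set \(\{wP_{\G_{fund}}:w\in W\}\) of chamber-nestohedra. Thus it suffices to exhibit one metric invariant that distinguishes the chamber-nestohedra from all other facets, for all suitable lists outside a finite union of affine hyperplanes in parameter space.

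The first step is to observe that every \(\theta\in Aut(P_\G(\Phi))\) fixes the origin: by Theorem \ref{teo:verticipermutoedro} the vertex set is \(W\)-stable, so its centroid is a \(W\)-fixed point, forced to be the origin; and every isometry of a polytope sends the centroid of the vertex set to itself. Consequently \(\theta\) preserves the distance from the origin to the affine hull of each facet, and I would use this distance as my invariant.

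Next, I would compute these distances hyperplane by hyperplane. The affine hull of \(P_{\G_{fund}}\) is \(H_V\), at distance \(a/|\delta|\) from the origin; the affine hull of every other facet is \(W\)-conjugate to some \(H_A\) (at distance \((a-\epsilon_{\dim A})/|\delta_A|\)) or to some \(\overline H_B\) with \(B=A_1\oplus\cdots\oplus A_k\) decomposable (at distance \((a-\sum_i\epsilon_{\dim A_i})/|\delta_B|\)). The norms \(|\delta|,|\delta_A|,|\delta_B|\) depend only on \(\Phi\) and \(\G\), not on the suitable list. Setting \(a/|\delta|\) equal to each of the remaining distances produces a finite collection of affine equations in \((\epsilon_1,\ldots,\epsilon_{n-1})\), one for each \(W\)-orbit of \(A\) or of \(\G_{fund}\)-decomposable \(B\); each such equation cuts out a single hyperplane in parameter space.

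Any suitable list avoiding this finite union of hyperplanes makes \(a/|\delta|\) distinct from every other facet distance; hence the chamber-nestohedra are the unique facets whose affine hulls lie at distance \(a/|\delta|\) from the origin, and any isometry of \(P_\G(\Phi)\) must permute them. Theorem \ref{automorfismi} then yields \(Aut(\Phi)=Aut(P_\G(\Phi))\). Since the set of suitable lists with \(\epsilon_n=a\) is a nonempty open subset of \(\R^{n-1}\), removing a finite union of hyperplanes leaves an open dense subset, in particular infinite. The only delicate step is the centroid argument pinning down the origin as the unique isometry-fixed point; the rest is bookkeeping to avoid finitely many linear coincidences.
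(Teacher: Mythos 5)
Your proof is correct and follows essentially the same route as the paper: both arguments reduce the statement to Theorem \ref{automorfismi} by observing that for generic suitable lists (outside a finite set of linear conditions) the distance from the origin to the affine hull of $H_V$ differs from the distances to all other defining hyperplanes, forcing any isometry preserving $P_\G(\Phi)$ to permute the chamber-nestohedra. The one detail you make explicit that the paper leaves implicit is the centroid argument pinning down the origin as a fixed point of every $\theta\in Aut(P_\G(\Phi))$; this is a genuine clarification worth having, since the paper's $Aut(P_\G(\Phi))$ consists of isometries of $V$ (a priori affine), and the "distance from the origin" invariant is only available once one knows these isometries are linear.
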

\begin{proof}
This follows from the observation that   the elements of \(Aut(P_\G(\Phi))\) are isometries   and, once \(a\) is fixed, all the  choices, except for a finite number of exceptions,  of the other numbers \(\epsilon_i\)  imply  that   the distance from the origin of \(H_V\) is different from the distances from the origin of the other defining  hyperplanes. 
Therefore   \(Aut(P_\G(\Phi))\) leaves invariant the set of the nestohedra \(\{wP_{\G_{fund}}\: | \: w\in W\}\) and we can apply Theorem \ref{automorfismi}.
\end{proof}

The following corollary  illustrates another sufficient (non metric)  condition for \(Aut(P_\G(\Phi))= Aut (\Phi)\). For any \(C \in  {{\mathcal C}_\Phi}_{fund}\) let  \(G_C\) be the subgroup of \(Aut(P_\G(\Phi))\) which leaves the facet determined by the hyperplane \(H_C\) (or \({\overline H}_C\))  invariant. We observe that \(W_C\subset G_C\) by construction. 
\begin{cor}
\label{criterioaut}
Let \(\G\) be \(Aut (\Phi)\) invariant.
If for every \(C \in  {{\mathcal C}_\Phi}_{fund}\) the automorphism group of the Dynkin diagram of \(\Phi\) is not isomorphic to   the group \(G_C\)  then we have \(Aut(P_\G(\Phi))= Aut (\Phi)\). 
In particular if the automorphism group of the Dynkin diagram of \(\Phi\) is trivial then \(Aut(P_\G(\Phi))= Aut (\Phi)=W\).
\end{cor}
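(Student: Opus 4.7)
By Theorem \ref{automorfismi} it is enough to verify its hypothesis: every \(\theta\in Aut(P_\G(\Phi))\) must preserve the collection of nestohedra \(\{wP_{\G_{fund}} : w\in W\}\). I would argue by contradiction. If some \(\theta\) sent \(P_{\G_{fund}}\) to a facet that is not a nestohedron (i.e.\ one of the facets that cross the chambers), then, since \(W\subset Aut(P_\G(\Phi))\) acts transitively on each orbit of such facets with a fixed labelled nested set, I could compose on the left with a suitable \(w\in W\) and assume \(\theta(P_{\G_{fund}})=F_C\), where \(F_C\) is the facet cut out by \(H_C\) (or by \(\overline H_C\)) for some \(C\in{{\mathcal C}_\Phi}_{fund}\) with \(C\neq V\). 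Conjugation by \(\theta\) then identifies the stabilizer \(S\) of \(P_{\G_{fund}}\) in \(Aut(P_\G(\Phi))\) with the stabilizer of \(F_C\), which by definition is \(G_C\), so \(G_C\cong S\). The entire plan therefore reduces to proving \(S=\Gamma\), since this contradicts \(G_C\not\cong \Gamma\).

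The inclusion \(\Gamma\subseteq S\) is immediate: the \(Aut(\Phi)\)-invariance of \(\G\) means \(\Gamma\) permutes \(\G_{fund}\) and fixes the fundamental chamber setwise, so \(\Gamma\) stabilizes \(P_{\G_{fund}}\). For the reverse inclusion, let \(\phi\in S\). As a linear isometry preserving \(P_{\G_{fund}}\), it preserves the affine span \(H_V\); and since the origin belongs to \(HS_V\), the closed half-space is preserved, which forces \(\phi(\delta)=\delta\). Now \(\phi\) permutes the facets \(F_A=P_{\G_{fund}}\cap H_A\), indexed by \(A\in\G_{fund}-\{V\}\). Since isometries preserve the distance \((a-\epsilon_{\dim A})/|\delta_A|\) of each \(H_A\) from the origin, combining this with the identity \(|\delta_A|^2=|\delta|^2-|\pi_A|^2\) and with the observation that \(|\pi_A|^2\) is minimized precisely when \(A=\langle \alpha_i\rangle\) is 1-dimensional (with value \(|\alpha_i|^2/4\)), I would conclude that \(\phi\) must permute the 1-dimensional facets among themselves. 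Then from \(\phi(\delta-\frac{1}{2}\alpha_i)=\delta-\frac{1}{2}\alpha_{\sigma(i)}\) and \(\phi(\delta)=\delta\) one reads off \(\phi(\alpha_i)=\alpha_{\sigma(i)}\); since \(\phi\) is an isometry, the induced permutation \(\sigma\) preserves all Cartan pairings and so defines an automorphism of the Dynkin diagram, whence \(\phi\in \Gamma\). This gives \(S=\Gamma\) and hence the desired contradiction \(G_C\cong\Gamma\).

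The "in particular" statement follows at once: when \(\Gamma=\{e\}\), the hypothesis reduces to \(G_C\neq \{e\}\) for every \(C\neq V\), which is automatic since the nontrivial subgroup \(W_C\) is always contained in \(G_C\); hence \(Aut(P_\G(\Phi))=Aut(\Phi)=W\). The main obstacle I foresee is the metric step that forces \(\phi\) to send 1-dimensional facets to 1-dimensional facets: in the simply-laced case the strict inequality \(|\pi_A|^2>|\alpha|^2/4\) for \(\dim A\geq 2\) is a clean computation, but for non-simply-laced root systems one must handle potential length coincidences arising when \(A\in \G_{fund}\) is reducible as a sum of 1-dimensional short-root spans (so that \(|\pi_A|^2\) could equal \(|\alpha|^2/4\) for a long simple root \(\alpha\)), and rule these out using the further equations \(\phi(\pi_A)=\pi_{\sigma(A)}\) in combination with \(\phi(\delta)=\delta\).
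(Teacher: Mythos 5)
Your overall strategy matches the paper's: reduce the claim to showing that the stabilizer \(S\) of \(P_{\G_{fund}}\) in \(Aut(P_\G(\Phi))\) equals \(\Gamma\), use conjugation by \(\theta\) to get \(G_C\cong S\), contradict the hypothesis, conclude that the set of nestohedra is preserved, and invoke Theorem \ref{automorfismi}; the ``in particular'' step is handled correctly. The one genuine gap is exactly the one you flagged yourself: the metric step showing \(S\subseteq\Gamma\) does not close as written. Knowing that an isometry \(\phi\in S\) preserves the distance \((a-\epsilon_{\dim A})/|\delta_A|\) of \(H_A\) from the origin is not enough, because as \(\dim A\) grows \emph{both} the numerator \(a-\epsilon_{\dim A}\) and the denominator \(|\delta_A|=\sqrt{|\delta|^2-|\pi_A|^2}\) shrink; the minimality of \(|\pi_A|^2\) for one-dimensional \(A\) does not by itself pin down the dimension, even in the simply-laced case, and the non-simply-laced coincidences you worry about only make things worse.

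The fix is to exploit \(\phi(\delta)=\delta\) more aggressively, via the projection identity \((\delta_A,\delta)=|\delta_A|^2\) (which holds because \(\delta_A\) is the orthogonal projection of \(\delta\) to \(A^\perp\), so \(\delta=\delta_A+\pi_A\) with \(\delta_A\perp\pi_A\)). Since \(\phi\) permutes the facets of \(P_\G(\Phi)\) adjacent to \(P_{\G_{fund}}\), it sends \(H_A\) to some \(H_{A'}\); matching defining equations gives \(\phi(\delta_A)=c\,\delta_{A'}\) with \(c>0\) and \(a-\epsilon_{\dim A}=c\,(a-\epsilon_{\dim A'})\), and matching norms gives \(c=|\delta_A|/|\delta_{A'}|\). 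Pairing with \(\delta\) gives
\[
|\delta_A|^2=(\delta_A,\delta)=(\phi\,\delta_A,\phi\,\delta)=(c\,\delta_{A'},\delta)=c\,|\delta_{A'}|^2,
\]
so also \(c=|\delta_A|^2/|\delta_{A'}|^2\). Comparing the two expressions forces \(|\delta_A|=|\delta_{A'}|\), hence \(c=1\), hence \(\epsilon_{\dim A}=\epsilon_{\dim A'}\) and so \(\dim A=\dim A'\). In particular \(\phi\) permutes the one-dimensional facets among themselves, regardless of root lengths; and \(c=1\) also yields \(\phi(\pi_A)=\pi_{A'}\), so for \(A=\langle\alpha_i\rangle\) one reads off \(\phi(\alpha_i)=\alpha_j\), and your conclusion follows. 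Note that this is a different filling of the gap from the paper's, which instead invokes the combinatorial counting argument in the proof of Theorem \ref{automorfismi} (the number of \((n-2)\)-faces of the facet \(\overline{F}\) lying on nestohedra equals \(|W_A|\), and \(A\) is one-dimensional iff \(|W_A|=2\)); the computation above is shorter, works for any suitable list, and avoids the length-coincidence worries entirely.
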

\begin{proof}
This is an immediate application of Theorem \ref{automorfismi} since   the subgroup of \(Aut(P_\G(\Phi))\) which leaves \(P_{\G_{fund}}\) invariant is, as we showed in the proof of Theorem \ref{automorfismi}, the automorphism group \(\Gamma\) of the Dynkin diagram. Let \(\varphi\in Aut(P_\G(\Phi))\); then  the subgroup of \(Aut(P_\G(\Phi))\) which  leaves the facet \(\varphi(P_{\G_{fund}}) \) invariant is a conjugate of \(\Gamma\), and, under our hypothesis, this implies that  \(\varphi(P_{\G_{fund}}) \) is one of the facets in \(\{wP_{\G_{fund}}\: | \: w\in W\}\). The claim of Theorem \ref{automorfismi} then implies that  \(Aut(P_\G(\Phi))= Aut (\Phi)\).
\end{proof}

\begin{rem}
\label{rem:facecounting}
The corollary above  is easy to apply when \(\Phi\) is irreducible,  since the automorphism group of the Dynkin diagram is  small (it is either trivial or \(\Z_2\),  or \(S_3\) in the case \(D_4\)) and it is easy to compare it with the groups \(G_C\). 
\end{rem}

\section{A remark on the \(S_{n+1}\) action on the face poset of \(CY_{\F_{A_{n-1}}}\)}
\label{sec:extendedaction}
As we have seen in the preceding section, the face poset  of a permutonestohedron \(P_\G(\Phi)\)  provides  nice geometrical realizations of  representations of \(W\) or even  of \(Aut (\Phi)\).  In this section we focus on a special case, where  \(W=S_n\) and  some  representations  of \(S_{n+1}\) also come into play.

First we recall that there  is a well know `extended' \(S_{n+1}\) action on the De Concini-Procesi model \(Y_{\F_{A_{n-1}}}\), that is a quotient of \(CY_{\F_{A_{n-1}}}\): it comes from the isomorphism with the moduli space \(M_{0,n+1}\) (see \cite{DCP1}, \cite{gaiffiseminario}), and its character has been computed in \cite{etihenkamrai}.

This extended action can be lifted  to the face poset of \(CY_{\F_{A_{n-1}}}\), which is a subposet of \(P_{\F_{A_{n-1}}}(A_{n-1})\). 
We illustrate this lifting using our description of this subposet.

Let \(\Delta=\{\alpha_0,\alpha_1,...,\alpha_{n-1}\}\) be a basis for the root system of type \(A_n\), where we added to a basis of \(A_{n-1}\) the extra root \(\alpha_0\),  and let \(\tilde{\Delta}=\{\tilde{\alpha},\alpha_0,\alpha_1,...,\alpha_{n-1}\}\) be the set of   roots that appear in  the  affine diagram. 
We identify in the standard way  \(S_{n+1}\) with the group which permutes \(\{0,1,...,n\}\) and    \(s_{\alpha_0}\) with the transposition \((0,1)\). Therefore    \(S_{n}\), the subgroup generated by \(\{s_{\alpha_1},...,s_{\alpha_{n-1}}\}\), is identified with the subgroup which permutes \(\{1,...,n\}\).

Let \(\Sc=\{V, A_1,A_2,...,A_k, B_1,B_2,...,B_s\}\) be a nested set  in \({\F_{A_{n-1}}}_{fund}\) and let \(\sigma \in S_{n+1}\).  Let us then denote by \(C\) the cyclic subgroup generated by \((0,1,2,3,4,5,...,n)\) 
and by  \(w=\sigma(0,1,2,3,4,5,...,n)^r\)  the only element  in  the coset \(\sigma C\) which fixes \(0\); we notice that  \(w\) belongs to   \(S_n\).

Moreover, let us suppose that, for every subspace \( A_j\), some of the roots contained in \(\sigma A_j\) have  \(\alpha_0\) in their support (when they are   written with respect to the basis \(\Delta\)), while this doesn't happen for the subspaces 
\(\sigma B_t\).
Then for every \(j\) we   denote  by \(\overline A_j\) the subspace generated by all the roots of \(\tilde{\Delta}\)  which are orthogonal to \(A_j\).

\noindent As a first step in the description of  the \(S_{n+1}\) action, we put   \[\sigma \cdot ( \{e\},\Sc  )= (w\{e\}, \{V, ..,w^{-1}\sigma\overline{A_j},.., w^{-1}\sigma B_s,..\}).\]
  As one can quickly check, this can be extended  to an  \(S_{n+1}\) action on  the full face poset of \(CY_{\F_{A_{n-1}}}\) by imposing that \(\sigma\) sends  
 the  face \((\gamma\{e\}, \Sc)\), where \(\gamma \in S_{n}\), to the face \(\sigma \gamma \cdot  ( \{e\},\Sc  )\).

\begin{es}  Let \(\Sc\) be the nested set of  \({\F_{A_{4}}}_{fund}\) made by \(V\), \(A=<\alpha_1,\alpha_2>\) and \(B=<\alpha_4>\). The group  \(S_6\) is generated by the reflections \(s_{\alpha_0}, s_{\alpha_1},s_{\alpha_2},s_{\alpha_3},s_{\alpha_4}\) and we identify \(S_5\) with the subgroup generated by \(s_{\alpha_1},s_{\alpha_2},s_{\alpha_3},s_{\alpha_4}\).
Now  we  compute \(s_{\alpha_0}( \{e\},\{V, A,B\}  )\).

We notice that the root \(s_{\alpha_0}\alpha_1\) contains \(\alpha_0\) in its  support (when it  is written with respect to the basis \(\Delta\)).
We then denote by \(\overline A\) the subspace generated by all the roots of \(\tilde{\Delta}\)  which are orthogonal to \(A\): \(\overline A=<\tilde{\alpha},\alpha_4>\).

Let \(w=(0,1)(0,1,2,3,4,5)\) i.e. the representative of the coset \((0,1)C\) in \(S_6\) which leaves \(0\) fixed.
Then \(s_{\alpha_0}=(0,1)\) sends  the face \(( \{e\},\{V,A,B\}  )\) to the face \[(w\{e\}, \{V, w^{-1}s_{\alpha_0}\overline{A}, w^{-1}s_{\alpha_0}B\})=(w\{e\}, \{V,<\alpha_3,\alpha_4>, <\alpha_3>\}) \]
\end{es}

Therefore the group \(S_{n+1}\) acts on the face poset  of \(CY_{\F_{A_{n-1}}}\), which  is the disjoint union of \(n!\) associahedra: the action of \(\sigma\in S_{n+1}\) sends the face poset of  the  associahedron that  lies in the fundamental chamber   onto the face poset of an  associahedron that   lies in a  chamber which may be different from the fundamental one.   
But it is easy to provide  examples where  two associahedra that  lie in two adjacent chambers are sent to two associahedra whose chambers are not  adjacent. 

This shows that  this lifted action  is not induced by an isometry and  it cannot be extended to the full permutoassociahedron.
Anyway  it provides  geometrical realizations of all the representations \(Ind_G^{S_n+1} Id\),  where  \(G\) is any  subgroup of the cyclic group \(C\) and \(Id\) is  its trivial representation  (the case \(G=\{e\}\), i.e. the regular representation of \(S_{n+1}\),  occurs only if \(n\geq 4\)). 

In fact the stabilizer of an element of the face poset is by construction a subgroup of the cyclic group \(C\): in the notation above, \(w=e\) only if \(\sigma\in C\).
Now we want to  show that all the above mentioned representations appear.
As for the regular representation of \(S_{n+1}\), we notice that,  for instance, if \(n\geq 4\)
 the stabilizer of \(( \{e\},\{V,<\alpha_1,...,\alpha_{n-2}>\}  )\) is the trivial subgroup. 

Let then  \(d<n+1\) be a divisor of \(n+1\). We will exhibit an element of the face poset whose   stabilizer is generated by \((0,1,2,3,4,5,...,n)^d\).

If \(d>2\) and \(dk=n+1\) we consider for instance  the face  \(( \{e\},\{V,<\alpha_1,...,\alpha_{d-2}>, <\alpha_{d+1},...,\alpha_{2d-2}>,..., <\alpha_{(k-1)d+1},...,\alpha_{kd-2}>\}  )\): its stabilizer is generated by \((0,1,2,3,4,5,...,n)^d\). 

If \(d=2\) and \(2k=n+1\) we consider \(( \{e\},\{V,<\alpha_1,\alpha_2,..., \alpha_{n-2}>, <\alpha_1>,<\alpha_3>,...,<\alpha_{n-2}>\}  )\). Its stabilizer is generated by \((0,1,2,3,4,5,...,n)^2\). 

If \(d=1\) we consider \(( \{e\},\{V\}  )\). Its stabilizer is, by definition of the \(S_{n+1}\) action, the full cyclic group \(C\).

\section{Examples}
\label{sec:examples}
\subsection{Some low-dimensional cases}
In this section we will show some examples and pictures of permutonestohedra. 

Let us start from \(A_2\). There is only one building set associated to this root system, since the maximal and the minimal building set coincide. There are six chambers and in every chamber the nestohedron is a segment.
Therefore in this case the permutonestohedron is a  dodecagon, and it is a  Kapranov's permutoassociahedron (see Figure \ref{provadodecagono}). 
 \begin{figure}[h]
 \center
\includegraphics[scale=0.22]{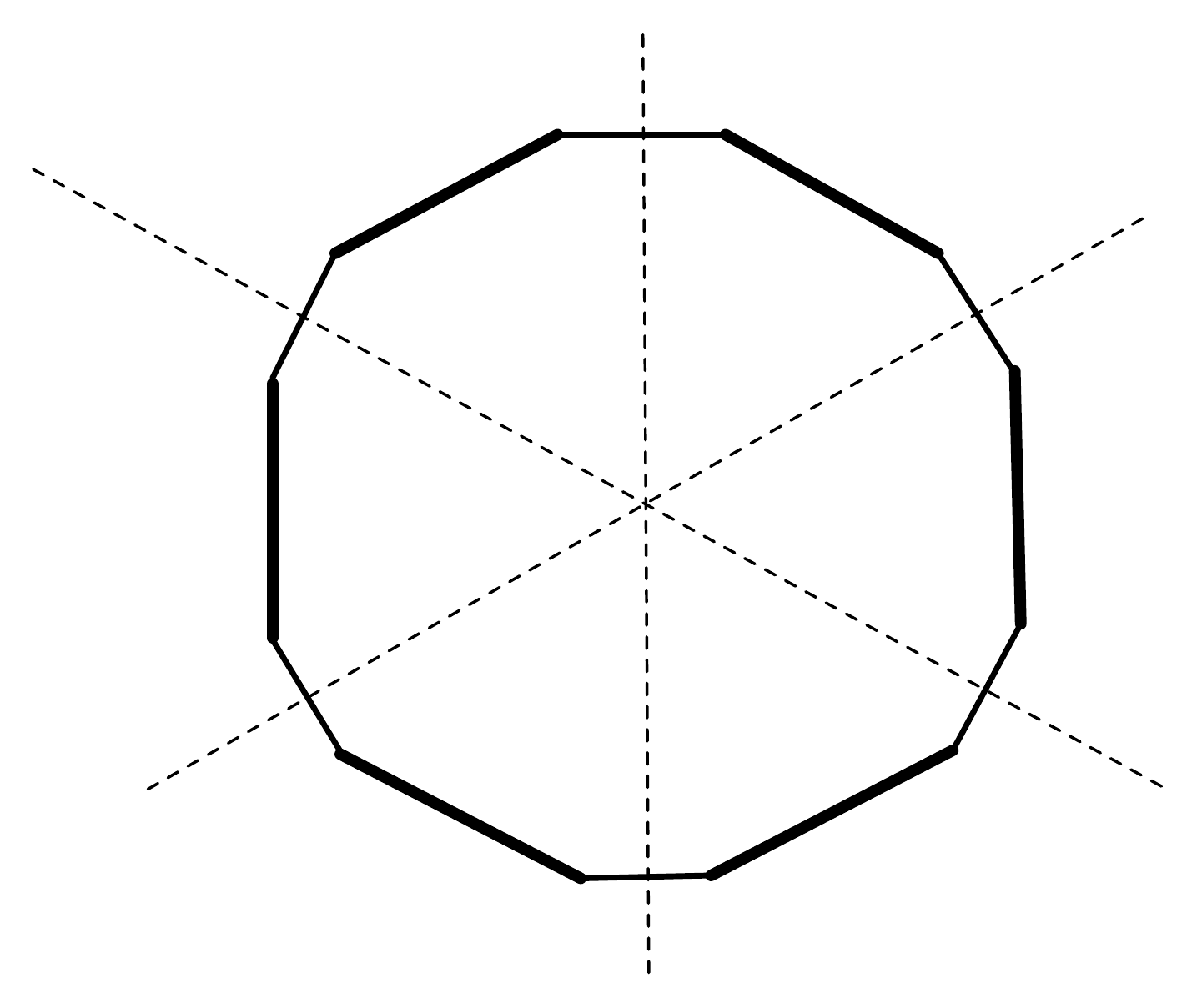}
\caption{The  permutonestohedron of type \(A_2\) (i.e. the two dimensional Kapranov's permutoassociahedron). The thick edges provide a linear realization of  \(CY_{\F_{A_2}}\):  once \(\epsilon_2\) is fixed, there is only one admissible value for \(\epsilon_1\) such that the dodecagon  is regular.}
\label{provadodecagono}
\end{figure}
It is not necessarily regular;  this depends on the choice of \(\epsilon_1,\epsilon_2\): once \(\epsilon_2\) is fixed, there is only one admissible value for \(\epsilon_1\) such that \(P_{\F_{A_2}}(A_2)\) is regular. If it is not regular  its edges have two different lengths and its automorphism group  coincides with  \(Aut \ A_2 \cong S_3\rtimes \Z_2\); if it is regular its automorphism group, which is the full dihedral group with 24 elements,   strictly contains \(Aut \ A_2\).

In the \(A_3\) case there are two distinct \(S_4\) invariant building sets: the building set of the irreducibles \(\F_{A_3}\) and the maximal building set. A picture of the corresponding permutonestohedra, which are a Kapranov's permutoassociahedron (\(P_{\F_{A_3}}(A_3)\)) and  a `permutopermutohedron', is in Figure \ref{permutonestoedria3bis}. It is easy to show  that for every choice of a suitable list \(\epsilon_1<\epsilon_2<\epsilon_3=a\) their automorphism group coincides with \(Aut \ A_3 \cong S_4\rtimes \Z_2\).  In the minimal case the nestohedra that  lie inside the chambers are pentagons (i.e., the two dimensional Stasheff's associahedra) while in the maximal case they are hexagons (i.e. the two dimensional permutohedra).

 \begin{figure}[h]
 \center
\includegraphics[scale=0.28]{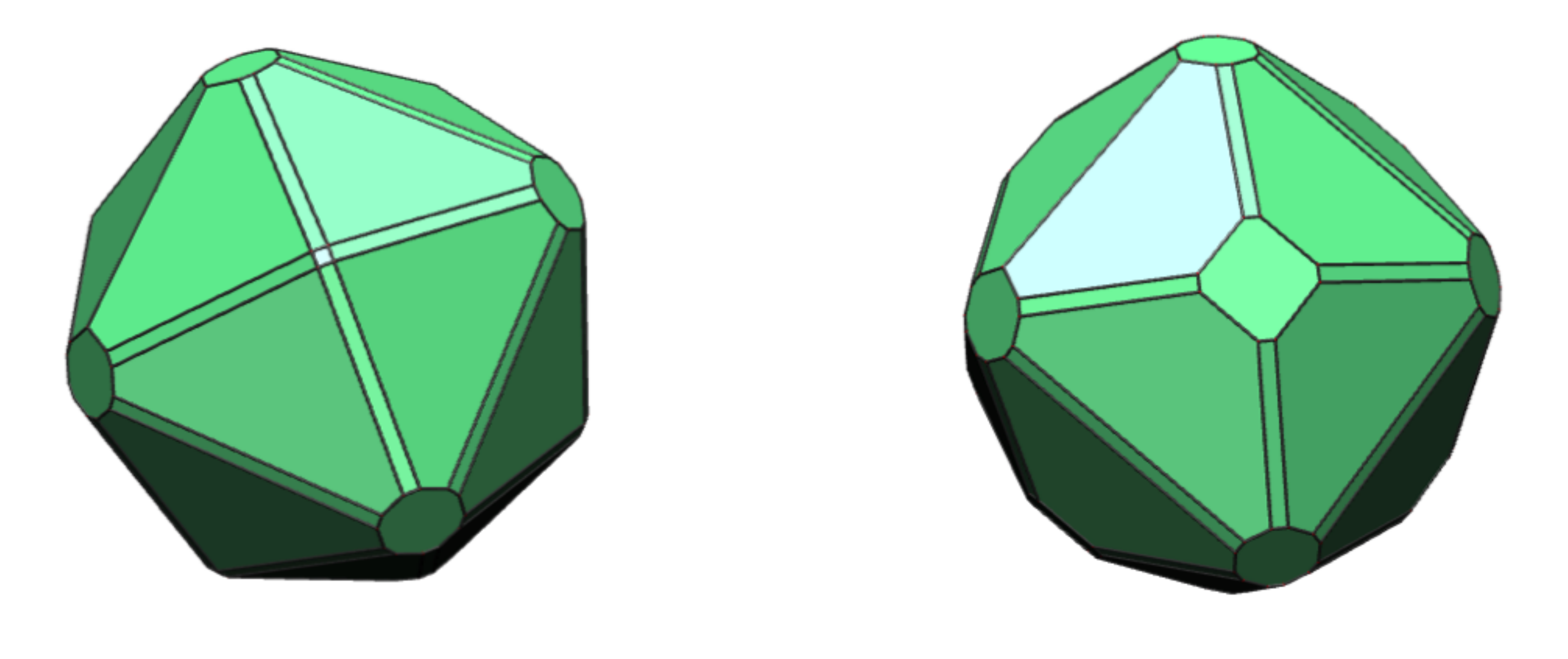}
\caption{The minimal (on the left)  and the maximal permutonestohedron of type \(A_3\). In accordance with Corollary \ref{cor:nonsemplice}, the maximal permutonestohedron is a simple polytope, while the minimal one is not simple. }
\label{permutonestoedria3bis}
\end{figure}

In the \(B_2\) case there is only one building set, and the corresponding permutonestohedron is a polygon with 16 edges. It is not regular and, depending on the choice of \(\epsilon_1\), its edges can have  two or  three different lengths;   its automorphism group is \(W_{B_2}\cong \Z_2^2\rtimes S_2\), i.e. the dihedral group with eight elements. 

In the \(B_3\) case we have two \(W_{B_3}\)-invariant building sets.  The corresponding permutonestohedra appear  in Figure \ref{permutonestoedrib3}. As in the \(A_3\) case, in the minimal case the nestohedra inside the chambers are pentagons while in the maximal case they are hexagons. 

The automorphism group   of these permutonestohedra is  \(W_{B_3}\cong \Z_2^3\rtimes S_3\) as it  follows for instance  from Corollary  \ref{criterioaut}.

 \begin{figure}[h]
 \center
\includegraphics[scale=0.41]{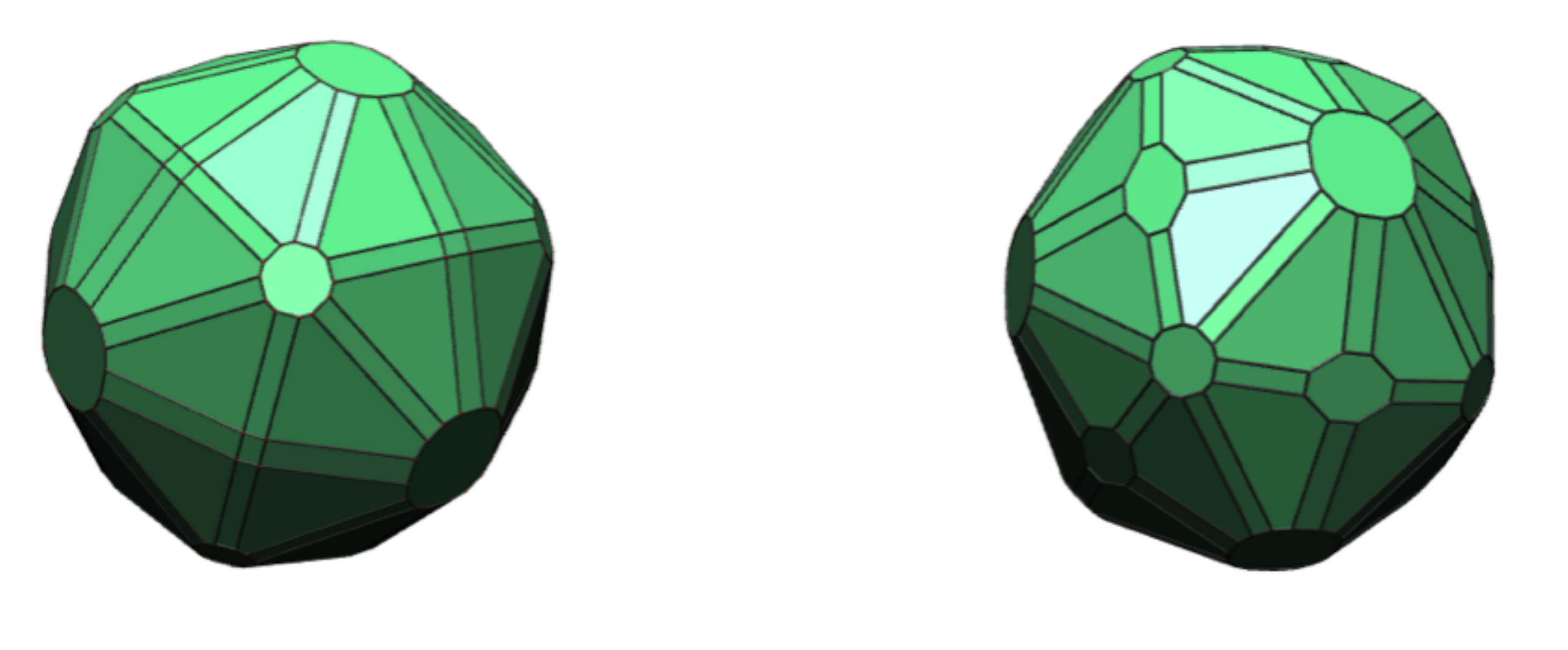}
\caption{The minimal (on the left)  and the maximal permutonestohedron  of type  \(B_3\).}
\label{permutonestoedrib3}
\end{figure}

Let us now consider the boolean arrangements \(Bo(n)\), i.e., the arrangements associated with  the root systems \(A_1^n\). The nestohedra \(P_{\G_{fund}}\), as \(\G\) varies among all the \(W=\Z_2^n\) - invariant building sets containing \(V\),  are all the nestohedra  in the `interval simplex-permutohedron' (see \cite{petric2}). 

In the \(A_1\times A_1\)  case there is only one possible building set which contains \(V\) and the corresponding permutonestohedron is an octagon. It may  be regular, depending on the choice of \(\epsilon_1\). If it is not regular its edges  have two different lengths and its automorphism group coincides with \(Aut \ A_1^2\cong \Z_2^2\rtimes S_2\) (\(\cong W_{B_2}\)).

In Figure \ref{boolean3max} there are two pictures  of the maximal permutonestohedron of type  \(A_1\times A_1 \times A_1\).
 Its automorphism group coincides with \(Aut \ A_1^3\cong \Z_2^3\rtimes S_3\) (\(\cong W_{B_3}\)).
We recall that the real De Concini-Procesi model associated with the maximal building set of the boolean arrangement \(Bo(n)\) is isomorphic to the toric variety of type \(A_{n-1}\) (see Procesi \cite{procesitoric}, Henderson \cite{HenPisa}).

 \begin{figure}[h]
 \center
\includegraphics[scale=0.43]{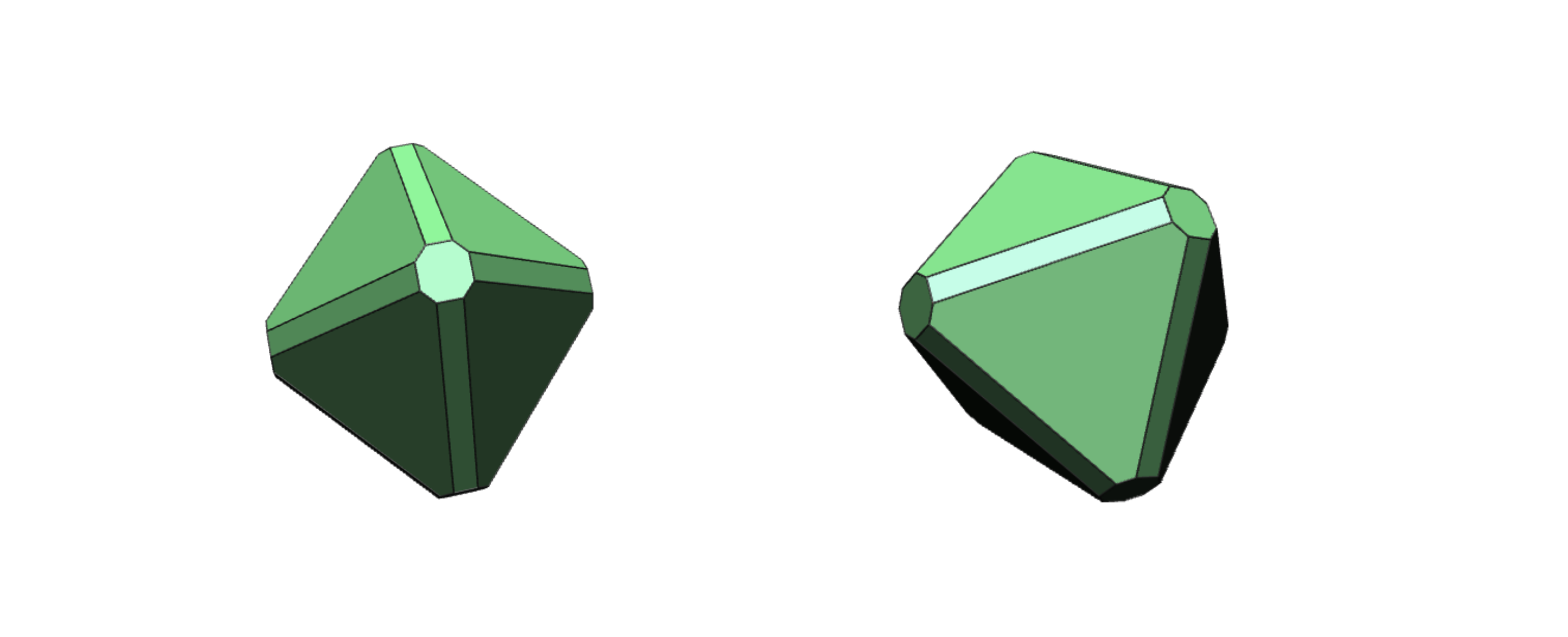}
\caption{Two pictures of the maximal permutonestohedron in the three dimensional boolean case (\(A_1\times A_1\times A_1\)).}
\label{boolean3max}
\end{figure}

  In the case of the root system \(A_4\)  there is  one \(S_5\)-invariant building set which strictly contains the minimal one and is different from the  maximal one.  Therefore  there is a permutonestohedron which is intermediate between the minimal and the maximal one.
 For any irreducible root system of dimension  \(n\geq 4\) intermediate building sets (i.e., not minimal or maximal) appear: in \cite{GaiffiServenti2} all the \(W\)-invariant  building sets \(\G\) of type \(A_n,B_n, C_n, D_n\) have been  classified.  

We observe that, if \(\Phi=A_n,B_n,C_n\) and we consider the minimal building set,  the corresponding permutonestohedron is   the convex hull  of \(|W|\) (resp. \(n!\), \(2^nn!\), \(2^nn!\)) \((n-1)\)-dimensional Stasheff 's associahedra. In  the \(A_n\) case this minimal permutonestohedron is a Kapranov's permutoassociahedron, and therefore it is combinatorially equivalent to  Reiner and Ziegler's  Coxeter  associahedron of type \(A_n\) (see \cite{reinerziegler}).  In the \(B_n\) and \(C_n\) case it is easy to check that the minimal permutoassociahedron is different from the corresponding Coxeter associahedron, since the latter is the convex hull of \(|W|\)   Stasheff's associahedra  whose dimension is \((n-2)\), not \((n-1)\). For instance, the Coxeter associahedron of type \(B_2\) is an octagon, while the unique  permutonestohedron of type \(B_2\) is   a polygon with 16 edges, as we observed before.

Furthermore, we remark  that if  \(\Phi=D_n\) the minimal permutonestohedon is not the convex hull of some Stasheff's associahedra: it is the convex hull of \(2^{n-1}n!\)  
graph associahedra of type \(D_n\) (for a description of these polytopes see for instance  \cite{carrdevadoss}  and  Section 8 of \cite{postnikov}). 

For any root system \(\Phi\), the  maximal permutonestohedron is the convex hull of \(|W|\) permutohedra.

\subsection{Counting faces}
\label{subsec:counting}

As an example of face counting on permutonestohedra, in this section we will compute the \(f\)-vectors of the minimal and  maximal permutonestohedra associated to the root system \(A_n\) (resp. \(P_{\F_{A_n}}(A_n)\) and \(P_{{\mathcal C}_{A_n}}(A_n)\)). These computations are  variations of the well known computations of the \(f\)-vectors of the Stasheff's associahedron and of the permutohedron.  
We first  need to introduce some notation.

For any positive integer \(n\) let us  denote by \(\Lambda_n\) the set of the partitions of \(n\) and, if \(\lambda\in \Lambda_n\) we denote by \(l(\lambda)\) the number of parts of \(\lambda\). Therefore we can write \(\lambda=(\lambda_1,\lambda_2,\ldots, \lambda_{l(\lambda)})\). A partition \(\lambda\in \Lambda_n\) can also be written as \(\lambda=\prod_{1\leq i\leq n}i^{m_i}\): this means that in \(\lambda\) the number \(i\) appears \(m_i\) times. We then put \[w(\lambda)=\frac{l(\lambda)!}{\prod_{1\leq i\leq n}{m_i!}}\]

 There is a bijective correspondence between the elements of $\F_{A_{n-1}}$ and the subsets of $\{1,\cdots,n\}$ of cardinality at
least two:  if   \(A^\perp\) is the subspace  described by the equation  \(x_{i_1}=x_{i_2}= \cdots =x_{i_k}\) then we represent \(A\in \F_{A_{n-1}}\) by the set \(\{i_1,i_2,\ldots, i_k\}\).
In an analogous way we can establish a bijective correspondence between the   elements of the maximal building set ${\mathcal C}_{A_{n-1}}$  and  the unorderd partitions of the set $\{1,\cdots,n\}$ in which at least one part has more than one element:
for instance, \(\{1,5,6\}\{2,3\}\{4\}\{7,8\}\) represents the  subspace  \(B\) in ${\mathcal C}_{A_{7}}$ of  dimension 4  such that \(B^\perp\)  is described by the system of equations \(x_{1}=x_{5}= x_{6}\), \(x_{2}=x_{3}\) and \(x_{7}=x_{8}\).

Now, to each unordered partition of $\{1,\cdots,n\}$ we can associate, considering the cardinalities of its parts, a partition in \(\Lambda_n\). Therefore we can associate a partition in \(\Lambda_n\) to every subspace in ${\mathcal C}_{A_{n-1}}$. We will say that a subspace in ${\mathcal C}_{A_{n-1}}$  has the form \(\lambda\in \Lambda_n \) if its associated partition is \(\lambda\). 
For instance, the subspace \(\{1,5,6\}\{2,3\}\{4\}\{7,8\}\)  in ${\mathcal C}_{A_{7}}$ has the form \((3,2,2,1)\).

\begin{teo}
\label{facceminimoan}
For every \(0\leq k\leq n-2\) the number of faces of codimension \(k+1\) of the  minimal permutonestohedron \(P_{{\mathcal F}_{A_{n-1}}}(A_{n-1})\) is
\[\sum_{\lambda\in \Lambda_n, \ l(\lambda)\geq2+k}w(\lambda)\frac{n!}{\lambda_1!\lambda_2!\cdots \lambda_{l(\lambda)}!}\left[  \frac{1}{k+1}\binom{l(\lambda)-2}{k} \binom{l(\lambda)+k}{k} \right]\]

\end{teo}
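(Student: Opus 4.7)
The plan is to exhibit a bijection between the codimension-$(k+1)$ faces of $P_{\F_{A_{n-1}}}(A_{n-1})$ and pairs $(\pi,F)$, where $\pi=(J_1,\ldots,J_l)$ is an ordered set partition of $\{1,\ldots,n\}$ into $l\geq k+2$ blocks and $F$ is a codimension-$k$ face of the Stasheff associahedron $K_l$ on $l$ letters. The formula will then follow from standard enumerations of ordered set partitions and of dissections of polygons.

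By the face-poset description of Section \ref{sec:poset}, a codimension-$(k+1)$ face is a pair $(\sigma H,\MS)$ with $\MS$ a labelled nested set in $\G_{fund}$ containing $V$ and $|\MS|-l'=k+1$, where $l'$ is the number of nontrivial labels. Identifying $\G_{fund}$ with the intervals of $\{1,\ldots,n\}$ of size at least $2$, I define the \emph{atoms} of $\MS$ to be the labelled minimal intervals together with those singletons $\{i\}$ not contained in any labelled minimal; they partition $\{1,\ldots,n\}$ into $l$ blocks inheriting a left-to-right order. Since $H=\prod W_{A_i}$ acts internally on each labelled minimal and trivially on the singletons, the ordered tuple $\pi:=(\sigma(\mathrm{atom}_1),\ldots,\sigma(\mathrm{atom}_l))$ depends only on the coset $\sigma H$. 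The other elements of $\MS$ -- the unlabelled minimals and the proper non-minimal subspaces, numbering $k$ in total since $|\MS|=1+l'+q+r$ forces $q+r=k$ -- are themselves intervals of $\{1,\ldots,n\}$, each a union of a contiguous run of atoms; by nestedness these intervals form a laminar family on the atoms, equivalently a dissection of an $(l+1)$-gon by $k$ non-crossing diagonals, equivalently a codimension-$k$ face $F$ of $K_l$. The inverse map reads off the block sizes from $\pi$, places fundamental atoms of those sizes in $\{1,\ldots,n\}$ consecutively, uses the dissection $F$ to install the non-atomic subspaces, and defines $\sigma H$ as the unique coset carrying the fundamental atoms onto the blocks of $\pi$ in order.

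Granting this bijection, the count factors cleanly. For $\lambda\in\Lambda_n$ with $l(\lambda)=l$ parts, the number of ordered set partitions of $\{1,\ldots,n\}$ whose block-size multiset is $\lambda$ equals $w(\lambda)\cdot\frac{n!}{\lambda_1!\cdots\lambda_l!}$, the factor $w(\lambda)=l!/\prod m_i!$ counting the distinct orderings of the parts and the multinomial coefficient distributing the $n$ elements among blocks of a fixed ordering. On the other hand, the number of codimension-$k$ faces of $K_l$ is the classical dissection count $\frac{1}{k+1}\binom{l-2}{k}\binom{l+k}{k}$. Summing the product over $\lambda$ with $l(\lambda)\geq k+2$ -- the precise range in which $K_{l(\lambda)}$ admits codimension-$k$ faces -- yields the stated expression.

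The main obstacle is rigorously verifying the bijection, in particular checking that the non-atomic subspaces of $\MS$ always yield a laminar family on the set of atoms (for which one uses nestedness together with the type $A$ fact that every element of $\G_{fund}$ is an interval, plus the observation that an unlabelled minimal must be disjoint from every labelled minimal since the latter is minimal in $\MS$) and that every codimension-$k$ face of $K_l$ is realized by a unique such family of intervals. Once this combinatorial dictionary is in place and shown to be $W$-equivariant in the sense that the coset $\sigma H$ is exactly the extra data needed to distinguish the $n!/\prod|I_i|!$ copies of a fundamental face, the rest is bookkeeping.
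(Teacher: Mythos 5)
Your argument is essentially the paper's: you decompose by the partition $\lambda$, count the choice of labelled minimals together with the coset as ordered set partitions (producing the factors $w(\lambda)$ and $n!/\prod\lambda_i!$), and invoke Cayley's dissection count for the remaining $k$ subspaces---exactly the three factors in the paper's product. The only difference is organizational: you phrase the count as an explicit bijection with pairs $(\pi,F)$ where $F$ is a codimension-$k$ face of the associahedron $K_{l(\lambda)}$, while the paper tallies the same three choices (fundamental composition, Cayley parenthesization, coset) sequentially.
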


\begin{proof}
Let us denote by  \(\G\), for brevity,  the minimal  building set \(\F_{A_{n-1}}\).
As a first step we consider the faces represented in the poset by the pais \((\sigma H, \MS)\) such that the sum of the minimal subspaces of \(\MS\) which are non trivially labelled has the form \(\lambda\) for a fixed \(\lambda \in \Lambda_n\).  It is immediate to check that there are \(w(\lambda)\)  choices of a set of  minimal subspaces with this property in \(\G_{fund}\).

We then notice that, if \(\MS'\) is the complement in   \(\MS\) to the subset given by \(V\) and by the  non trivially labelled minimal subspaces, the codimension of the face represented by \((\sigma H, \MS)\) is equal to \(|\MS'|+1\). So, once the set of minimal non trivially labelled subspaces  is fixed (with the form \(\lambda\)), we  have to count in how many ways it is possible to complete this set of subspaces to a nested set \(\MS\) adding \(V\) and \(k\) other subspaces in \(\G_{fund}\).

This is equivalent to finding the number of distinct  parenthesizations with \(k\) couples of parentheses of an ordered list of \(l(\lambda)\) distinct elements. This number was computed by Cayley in \cite{Cayley}  and is equal to 
\[\frac{1}{k+1}\binom{l(\lambda)-2}{k} \binom{l(\lambda)+k}{k}  \]
To finish our proof it if sufficient to observe that the number of faces represented by the pairs \((\sigma H, \MS)\), once \(\MS\) and \(H\) are fixed, is equal to the index of \(H\) in \(W\), which in our case is \(\frac{n!}{\lambda_1!\lambda_2!\cdots \lambda_{l(\lambda)}!}\).

\end{proof}
\begin{rem}
In the case of the  faces of codimension \(n-1\), i.e. the vertices, the formula of Theorem \ref{facceminimoan}  specializes, as expected,  to \(C_{n-1}n!  \) where \(C_{n-1}\) is the Catalan number \(\frac{1}{n}\binom{2n-2}{n-1}\). 
\end{rem}
\begin{teo}
\label{faccemassimoan}
For every \(0\leq k\leq n-2\) the number of faces of codimension \(k+1\) of the  maximal permutonestohedron \(P_{{\mathcal C}_{A_{n-1}}}(A_{n-1})\) is
\[\sum_{\lambda\in \Lambda_n, \ l(\lambda)\geq2+k}w(\lambda)\frac{n!}{\lambda_1!\lambda_2!\cdots \lambda_{l(\lambda)}!}\left[ \sum_{1<j_1<\cdots<j_k<l(\lambda)=j_{k+1}} \prod_{t=1}^{k}\binom{j_{i+1}-1}{j_i-1} \right]\]

\end{teo}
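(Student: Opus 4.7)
The plan is to follow the strategy of the proof of Theorem \ref{facceminimoan}, replacing Cayley's parenthesization count by a chain-counting argument tailored to the maximal building set \(\G = {\mathcal C}_{A_{n-1}}\). The starting observation is structural: because \({\mathcal C}_{A_{n-1}}\) is closed under sum, any two non-comparable elements of \(\G_{fund}\) have their sum in \(\G\), violating the nested condition. Consequently, every nested set in \(\G_{fund}\) containing \(V\) is a non-empty chain, and such chains correspond bijectively to chains of \emph{interval partitions} of \(\{1,\ldots,n\}\) (partitions whose non-singleton blocks are intervals), totally ordered by inclusion with \(V\) at the top.

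I would then stratify the faces \((\sigma H, \MS)\) of codimension \(k+1\) by the form \(\lambda\in\Lambda_n\) of the direct sum of the non-trivially labelled minimal elements of \(\MS\). Since \(\MS\) is a chain with (at most) one minimal element \(C_s\), this sum is either \(C_s\) (when the bottom is non-trivially labelled by \(W_{C_s}\)) or the zero subspace; accordingly, \(\lambda\) is the block-size multiset of the interval partition associated with \(C_s\) or \(\lambda=(1^n)\). Two easy bookkeeping identifications explain the outer prefactor: the number of interval partitions of \(\{1,\ldots,n\}\) with block multiset \(\lambda\) equals \(w(\lambda)=l(\lambda)!/\prod_i m_i!\), obtained by ordering the multiset along \(\{1,\ldots,n\}\); and the parabolic stabilizer \(W_{C_s}\cong S_{\lambda_1}\times\cdots\times S_{\lambda_{l(\lambda)}}\) has index \(n!/\prod_i\lambda_i!\) in \(W=S_n\), accounting for the cosets \(\sigma H\).

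The heart of the proof is to count, once \(C_s\) is fixed, the completions of \(\{V,C_s\}\) to a chain \(V=C_0\supsetneq C_1\supsetneq\cdots\supsetneq C_k\supsetneq C_{k+1}=C_s\) where \(C_t\) has \(j_t\) blocks, with \(1=j_0<j_1<\cdots<j_k<l(\lambda)=j_{k+1}\). Identifying each interval partition of \(\{1,\ldots,n\}\) with its set of ``cuts'' in \(\{1,\ldots,n-1\}\), refining \(C_t\) to \(C_{t+1}\) amounts to enlarging the cut-set, so a completion as above is encoded by a flag \(\emptyset\subsetneq S_1\subsetneq\cdots\subsetneq S_k\subseteq S\) where \(S\) is the cut-set of \(C_s\) (so \(|S|=l(\lambda)-1\)) and \(|S_t|=j_t-1\). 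The number of such flags is the multinomial \(\binom{l(\lambda)-1}{j_1-1,\,j_2-j_1,\,\ldots,\,l(\lambda)-j_k}\), which equals the telescoping product \(\prod_{t=1}^k\binom{j_{t+1}-1}{j_t-1}\) appearing in the statement. The case \(\lambda=(1^n)\) is absorbed by treating the ``virtual bottom'' \(C_s=\hat 0\) as having cut-set all of \(\{1,\ldots,n-1\}\), even though \(\hat 0\notin\G_{fund}\); the condition \(l(\lambda)\geq k+2\) is precisely what keeps the inner sum non-empty. Summing over admissible \(j_1<\cdots<j_k\) and then over \(\lambda\) yields the formula.

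The main obstacle I expect is making the chain-counting uniform across the two cases (labelled versus trivially-labelled bottom) so that a single formula handles both; the cut-set bijection and the telescoping identity between the multinomial and the product of binomials are standard once the setup is in place.
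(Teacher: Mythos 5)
Your proof is correct and follows essentially the same strategy as the paper's: stratify faces by the form \(\lambda\) of the (possibly empty) non-trivially labelled minimal element, count chains of coarsenings of interval partitions via the telescoping product of binomials, and multiply by \(w(\lambda)\) and the parabolic index \(n!/\prod_i\lambda_i!\). Your cut-set flag bijection is a clean reformulation of the paper's step-by-step ``split an ordered list of \(j_{t+1}\) elements into \(j_t\) parts'' argument, and you handle the \(\lambda=(1^n)\) edge case (trivially labelled bottom, virtual \(\hat 0\)) explicitly where the paper leaves it implicit.
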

\begin{proof}
Let us denote by  \(\G\), for brevity,  the maximal building set \({\mathcal C}_{A_{n-1}}\).
As in the first part of the proof of the preceding theorem, we observe that, given \(\lambda \in \Lambda_n\)  there are \(w(\lambda)\) subspaces in \(\G_{fund}\) whose form is \(\lambda\).
So, once \(\lambda\) if fixed,  we have \(w(\lambda)\) ways to choose a minimal non-trivially labelled subspace  whose form is \(\lambda\).  Now we  have to count in how many ways it is possible to complete this subspace to a nested set \(\MS\) adding \(V\) and \(k\) other subspaces in \(\G_{fund}\). Since the nested sets in this maximal case are linearly ordered by inclusion, we can do this in \(k\) steps.
At the first step we have to add a subspace which contains the minimal one, which is equivalent to split into  \(j_k\) parts (with \(1<j_k<l(\lambda)\)) an ordered list of \(l(\lambda)\) distinct elements.
Then at the second step we have to add a subspace which contains the one added at the first step, which is equivalent to split into  \(j_{k-1}\) (with \(1<j_{k-1}<j_k\)) parts  an ordered list of \(j_{k}\) distinct elements.
This process is successful if we manage to complete    \(k\) steps of this kind; this happens in
\[ \sum_{1<j_1<\cdots<j_k<l(\lambda)=j_{k+1}} \prod_{t=1}^{k}\binom{j_{i+1}-1}{j_i-1} \]
 different ways, where the sum ranges over all the possible lists of \(k\) numbers \(j_i\) that  satisfy the required inequalities.
 The claim then follows, as in the preceding theorem, by counting the index of the subgroup given by the non trivial label.

\end{proof}

\begin{rem}
The formula of Theorem \ref{faccemassimoan} in particular claims that  the vertices of  \(P_{{\mathcal C}_{A_{n-1}}}(A_{n-1})\) are \((n-1)! n!\), as expected from a `permutopermutohedron'. 
\end{rem}

\addcontentsline{toc}{section}{References}
\bibliographystyle{acm}
\bibliography{Bibliogpre} 
\end{document}